\newcommand{\R}{{\mathbb R}}
\newcommand{\N}{{\mathbb N}}
\newcommand{\E}{{\mathbb E}}
\newcommand{\EE}{{\mathbb E}}
\newcommand{\PP}{{\mathbb P}}
\newcommand{\prob}{{\mathbb P}}
\newcommand{\eul}{{\widehat X}}
\newcommand{\usn}{\underline {s}_n}
\newcommand{\utn}{\underline {t}_n}
\newcommand{\sgn}{\operatorname{sgn}}
\newcommand{\eps}{\varepsilon}
\renewcommand{\epsilon}{\varepsilon}
\newcommand{\F}{{\mathcal F}}
\newcommand{\reach}{\text{reach}}
\newcommand{\normal}{\mathfrak{n}}
\newcommand{\nor}{\mathfrak{n}}
\newcommand{\pr}{\text{pr}}
\newcommand{\unp}{\text{unp}}
\newcommand{\tr}{\text{tr}}
\newcommand{\cl}{\text{cl}}
\newcommand{\inter}{\text{int}}
\theoremstyle{plain}
\newtheorem{Thm}{Theorem}
\newtheorem{Prop}{Proposition}
\newtheorem{Lem}{Lemma}
\newtheorem{Cor}{Corollary}
\theoremstyle{definition}
\newtheorem{Rem}{Remark}
\newtheorem{exs}{Example}
\newcommand{\app}[1]{\textcolor{black}{#1}}
\DeclarePairedDelimiter{\norm}{\|}{\|}
\DeclarePairedDelimiter{\abs}{\lvert}{\rvert}
\DeclarePairedDelimiter{\floor}{\lfloor}{\rfloor}
\newcommand{\one}{\mathds{1}}
\begin{document}
\title[
Milstein-type methods for  SDEs with discontinuous drift coefficient]{
Milstein-type methods for strong approximation of
systems of SDEs with a discontinuous drift coefficient}

\author[Rauh\"ogger]
{Christopher Rauh\"ogger}
\address{
	Faculty of Computer Science and Mathematics\\
	University of Passau\\
	Innstrasse 33 \\
	94032 Passau\\
	Germany} \email{christopher.rauhoegger@uni-passau.de}

\begin{abstract} 
	We study strong approximation of $d$-dimensional stochastic differential equations (SDEs) with a discontinuous drift coefficient driven by a $d$-dimensional 
	Brownian motion $W$.
	More precisely, we essentially assume that the drift coefficient $\mu$ is piecewise Lipschitz continuous with an exceptional set $\Theta\subset \R^d$
	that is an orientable $C^5$-hypersurface of positive reach, the diffusion coefficient $\sigma$ is  assumed to be 
	Lipschitz continuous and, in a neighborhood of $\Theta$, both coefficients are bounded and $\sigma$ is non-degenerate.
	Furthermore, both $\mu$ and $\sigma$ are assumed to be $C^{1}$ with intrinsic Lipschitz continuous derivative on $\R^{d}\setminus \Theta$.

	In recent years, a number of results have been proven in the literature for
	strong approximation of such SDEs. In \cite{MGYR24} it was shown that the 
	Euler-Maruyama scheme achieves an $L_{p}$ error rate of order at least $1/2-$ for $p \geq 1$ in this setting.
	Furthermore in \cite{MGY20} for the case $d = 1$ a Milstein-type method was introduced, which achieves
	an $L_{p}$ error rate of order at least $3/4$. It was later proven in \cite{MGY21} that the error rate $3/4$ cannot be improved by any method
	based on finitely many evaluations of $W$ on a fixed grid. 
	
	In this article, we introduce, for the first time in literature, a Milstein-type method which can be used to approximate SDEs of this type for general $d \in \N$ and prove 
	that this Milstein-type scheme achieves an $L_{p}$-error rate of order at least $3/4-$ in terms of the number of steps. This method depends, in addition to evaluations of $W$ on a fixed grid, also on iterated integrals w.r.t. components of $W$, which can in general not be represented as functionals of $W$ evaluated at finitely many time points. We additionally prove that our suggested Milstein-type method  is only dependent on evaluations of $W$ on a finite, fixed grid if $\sigma$ is additionally commutative. 
	To obtain our main result we prove that a quasi-Milstein scheme achieves an $L_{p}$-error rate of order at least $3/4-$ in our setting if $\mu$ is additionally continuous, which is of interest in itself.
\end{abstract}

\maketitle

	\section{Introduction}
	Let 
	$ ( \Omega, \mathcal{F}, \PP ) $ 
	be a complete probability space with a 
	filtration $ ( \mathcal{F}_t )_{ t \in [0,1] } $
	that satisfies the usual conditions, let $d\in\N$ and 
	consider a $d$-dimensional autonomous stochastic differential equation (SDE)
	\begin{equation} \label{SDE}
		\begin{aligned}
			dX_{t} &= \mu(X_{t})dt + \sigma(X_{t})dW_{t}, \quad t \in [0,1], \\
			X_{0} &= x_{0},
		\end{aligned}
	\end{equation}
	where $x_0\in\R^d$,   $\mu\colon\R^d\to\R^d$ and   $\sigma\colon \R^d\to\R^{d \times d}$ are measurable functions and\\ $
	W=(W_{1},\dots,W_{d}) \colon [0,1] \times \Omega \to \R^d
	$
	is a $d$-dimensional
	$ ( \mathcal{F}_t )_{ t \in [0,1] } $-Brownian motion
	on $ ( \Omega, \mathcal{F}, \PP ) $.
	
	It is well-known that if the coefficients $\mu$ and $\sigma$ are Lipschitz continuous then the SDE \eqref{SDE} has a unique strong solution $X$. 
	Moreover, provided $\mu$ and $\sigma$ are $C^{1}$ the Milstein scheme is given recursively by 
	$\eul_{n,0}=x_0$ and
	\begin{align*}
	\eul_{n,(i+1)/n}=&\eul_{n,i/n}+\mu(\eul_{n,i/n})\, 1/n+\sigma(\eul_{n,i/n})\, (W_{(i+1)/n}-W_{i/n}) \\
	&+ \sum_{j_{1},j_{2} = 1}^{d} ((\sigma_{j_{2}})' \cdot \sigma_{j_{1}})(\eul_{n,i/n}) \int_{i/n}^{(i+1)/n} W_{j_{1},s} - W_{j_{1},i/n} \, dW_{j_{2},s} 
	\end{align*}
	for $i\in\{0,\ldots,n-1\}$, where for $x \in \R^{d}$ and $j \in \{1,\dots,d\}$ we write $\sigma_{j}(x)$ for the $j$-th column of $\sigma(x)$.
	It is well known, that the iterated Ito integrals  $\int_{i/n}^{(i+1)/n} W_{j_{1},s} - W_{j_{1},i/n} \, dW_{j_{2},s}$ for $j_{1},j_{2} \in \{1,\dots,d \}$ can in general not be represented as functionals of $W_{1/n},W_{2/n},\dots,W_{1}$, making the Milstein method dependent on access to these iterated integrals. However, if $\sigma$ fulfils the commutativity condition
	\[
		(\sigma_{j_{1}})'(x)\sigma_{j_{2}}(x) = (\sigma_{j_{2}})'(x)\sigma_{j_{1}}(x) \text{ for } x \in \R^{d} \text{ and } j_{1},j_{2} \in \{1,\dots,d \},
	\]
	the Milstein-scheme is based only on $W_{1/n},W_{2/n},\dots,W_1$. We note that this condition is automatically fulfilled if $d = 1$ holds.
	It is known, that provided $\mu$ and $\sigma$ are $C^{1}$ with bounded, Lipschitz continuous derivative, the Milstein scheme achieves at the final time $1$ an $L_p$-error rate of at least $1$ for all $p\geq 1$ in terms of the number $n$ of steps, i.e.	for all $p\geq 1$  there exists $c>0$ such that for all $n\in\N$,
	\begin{equation}\label{Leul}
		\bigl(\EE\bigl[\|X_1-\eul_{n,1}\|^p\bigr]\bigr)^{1/p}\leq \frac{c}{n},
	\end{equation}
	where $\|x\|$ denotes  the Euclidean norm of  $x\in\R^d$.
	
	In this article we study the performance of Milstein-type schemes in the case when the drift coefficient $\mu$ is discontinuous. Such SDEs  arise e.g.  in insurance, mathematical finance  and stochastic control problems, see e.g.~\cite{B80,Ka11,SS16} for examples.
	
	We assume that
	the drift coefficient  $\mu$ is piecewise Lipschitz continuous with an exceptional set $\Theta$ which is an orientable $C^5$-hypersurface of positive reach. The diffusion coefficient $\sigma$ is assumed to be Lipschitz continuous. Furthermore it is assumed that $\mu$ and $\sigma$ are $C^{1}$ on $\R^{d}\setminus \Theta$ with intrinsic Lipschitz continuous derivative. Moreover, in a  neighborhood of $\Theta$, $\mu$ and $\sigma$ are bounded and $\sigma$ is non-degenerate. See conditions (A) and (B) in Section \ref{ErrEs} for the precise assumptions on $\mu$ and $\sigma$.
	
	For such SDEs, existence and uniquness of a strong solution was recently proven in \cite{LS17}.
	Moreover, in \cite{LS17, LS18, NSS19}  $L_2$-appoximation of $X_1$ was studied. More precisely, in \cite{LS17} 
	an  $L_2$-error rate of at least $1/2$ was shown for a transformation-based Euler-Maruyama scheme.	This scheme is obtained by first applying a suitable transformation to the SDE \eqref{SDE} to obtain an SDE with Lipschitz continuous coefficients, then using the Euler-Maruyama scheme to approximate the solution of the transformed SDE and finally  applying the inverse of the transformation to the Euler-Maruyama scheme  to obtain an approximation of $X_1$.
	In \cite{ NSS19} an adaptive Euler-Maruyama scheme was constructed that
	adapts its step size to the actual distance of the scheme to the exceptional set $\Theta$ of $\mu$ -- it uses smaller  time steps the smaller the distance to $\Theta$ is. This scheme was shown to achieve an $L_2$-error rate of at least $1/2-$ (i.e, $1/2-\delta$ for every $\delta>0$) in terms of the average number of evaluations of $W$. Furthermore in \cite{SN21} it was proven that the Euler-Maruyama scheme achieves an $L_{p}$ error rate of at least $3/4-$ for a class of SDEs with irregular (possibly discontinuous) drift coefficient and additive diffusion.  

	In  \cite{LS18} the performance of the  Euler-Maruyama scheme $\eul_{n,1}$ for such SDEs was studied  and 
	an $L_2$-error rate of at least $1/4-$ was proven if the coefficients $\mu$ and $\sigma$ are additionally globally bounded. Recently this rate was improved in \cite{MGY20} for the case $d = 1$ where it was proven that the Euler-Maruyama scheme achieves an $L_{p}$ error rate of order at least $1/2$ in the case $d = 1$. This result was then further generalized very recently in \cite{MGYR24}, where an $L_{p}$-error rate of order at least $1/2-$ was proven for arbitrary dimensions. It is worth noting at this point that all these results for Euler-Maruyama-type methods hold under weaker assumptions than the ones stated above. However to the best of our knowledge the recent seminal article \cite{MGY19b}, covering the case $d = 1$, is the first result suggesting a higher order method based on evaluations of the underlying Brownian motion on a finite fixed grid under these assumptions. There a transformed Milstein-scheme is considered for which an $L_{p}$ error rate of order at least $3/4$ in terms of the number of evaluations of the Brownian motion was proven. This rate was later shown to be optimal among all algorithms for the approximation of $X_{1}$ based on $n$ evaluations of $W$ at fixed time points, see \cite{Ell24, MGY21} for  matching lower error bounds.
	
	In the present article we present the first higher order scheme under these assumptions which works for general $d \in \N$.
	More precisely, we show that there is a method $\eul_{n}$ based on $W_{1/n},W_{2/n},\dots,W_{1}$ and $\int_{i/n}^{(i+1)/n} W_{j_{1},s} - W_{j_{1},i/n} \, dW_{j_{2},s}$, $j_{1},j_{2} \in \{1,\dots,d \}$, $i \in \{1,\dots,n-1 \}$ such that for all $p\geq 1$ and all $\delta>0$ there exists $c>0$ such that for all $n\in\N$, 
	\begin{equation}\label{UBE}
		\bigl(\EE\bigl[\|X_1-\eul_{n,1}\|^p\bigr]\bigr)^{1/p}\leq \frac{c}{n^{3/4-\delta}},
	\end{equation}
	i.e., the suggested scheme $\eul_{n,1}$ achieves an $L_p$-error rate of at least $3/4-$ in terms of the number of steps for all $p\geq 1$. The scheme used here is a transformed Milstein-scheme which turns out to only be dependent on $W_{1/n},W_{2/n},\dots,W_{1}$ if the diffusion coefficient is additionally commutative.
	This upper bound follows directly from our main result, Theorem \ref{Thm1}, which states that for all $p\geq 1$ and all $\delta>0$ the supremum error of a time continuous version of the scheme achieves the rate of at least  $3/4-\delta$ in the $L_p$-sense, see Section \ref{ErrEs}.
	
	In order to achieve this result, two central hurdles had to be overcome. The first hurdle is that of constructing a suitable transformation mapping which can be used to transform the problem with discontinuous drift coefficient to a problem with Lipschitz continuous coefficients such that the $L_{p}$ error rate of the Milstein-scheme itself could be analyzed for the transformed problem. This transformation is similar to the one used in \cite{LS17}, however the transformation there had to be modified in order for the transformed coefficients to fulfil additional regularity assumptions required for the analysis of the Milstein-scheme.
	The second central hurdle that had to be overcome in order to prove Theorem \ref{Thm1} was the analysis of the Milstein-scheme in the transformed setting. The coefficients in this setting turn out to be Lipschitz continuous and $C^{1}$ on $\R^{d} \setminus \Theta$ with intrinsic Lipschitz continuous derivative there. Here the use of the term "Milstein scheme" is slightly imprecise, as the classical Milstein scheme requires the coefficients to be globally $C^{1}$. However the scheme used here is simply the classical Milstein scheme with the derivative of $\sigma$ set to zero on $\Theta$, hence the name is still justified we think. For the analysis of this scheme, in order to overcome the lack in regularity in this situation, a suitable bound on expectations of the form $\EE\bigl(\|f(\eul_{n,t}) - f(\eul_{n,\underline{t}_{n}})\|\bigr)$, where $f \colon \R \to \R$ is a piecewise Lipschitz continuous function and $\underline{t}_{n} = \frac{\floor{tn}}{n}$ had to be found. This was done by analysing $\EE\bigl(\|f(\eul_{n,t}) - f(\eul_{n,\underline{t}_{n}})\|\one_{A}\bigr)$, where $A$ is the set of all $\omega \in \Omega$ for which $\eul_{n,t}(\omega)$ and $\eul_{n,\underline{t}_{n}}(\omega)$ lie on "different sides" of $\Theta$. This term is problematic, as the piecewise Lipschitz continuity of $f$ does not provide a suitable bound on $\| f(\eul_{n,t}(\omega)) - f(\eul_{n,\underline{t}_{n}}(\omega)) \|$ directly. The proof of Theorem \ref{Thm1} is thus based on a detailed analysis of the expected time $\eul_{n,\underline{t}_{n}}$ and $\eul_{n,t}$ spend on different sides of the hypersurface $\Theta$. This involves estimating the time the process $\eul_{n}$ spends in a neighborhood of $\Theta$, which is achieved by transforming the problem to a one dimensional one and then utilising occupation time arguments. This analysis of the Milstein scheme in the transformed setting represents a strong error result for the Milstein-scheme under nonstandard assumptions, and thus has some significance on its own. Hence this result and the precise assumptions needed for it to hold, see Assumptions (C) and (D) are stated in a separate Theorem \ref{Thm2}.
	
	We add that for $d$-dimensional SDEs \eqref{SDE} with a discontinuous drift coefficient  in a different setting than the one considered in this article an $L_p$-error rate of at least $1/2-$ for all $p\geq 1$  for the Euler-Maruyama scheme $\eul_{n,1}$ was recently independently proven in \cite{DGL22}. More precisely, in \cite{DGL22} 
	the drift coefficient $\mu$ is assumed to be bounded and measurable and the diffusion coefficient $\sigma$ is assumed to be bounded, uniformly elliptic and twice continuously differentiable with bounded partial derivatives of order
	$1$ and $2$. In particular, $\sigma$ is non-degenerate everywhere. Moreover, for additive noise driven SDEs \eqref{SDE} with  $\sigma$ equal to the identity matrix an $L_p$-error rate of at least  $1/(2\max(2,d,p))+1/2-$ for all $p\geq 1$ was shown in \cite{DGL22}  for the Euler-Maruyama scheme $\eul_{n,1}$  for discontinuous drift coefficients $\mu$ of the form $
	\mu=\sum_{i=1}^n f_i \one_{K_i}$ with bounded
	Lipschitz domains $K_1, \ldots, K_n\subset \R^d$ and bounded Lipschitz continuous functions $f_1, \ldots, f_n\colon\R^d\to\R^d$.
	
	We also refer to
	\cite{MGY24} for a recent survey on the complexity of $L_p$-approximation of one-dimensional SDEs with a discontinuous drift coefficient. Furthermore in \cite{GLL24}, the Milstein scheme was analysed for SDEs with a drift coefficient which is only Hölder-continuous. Furthermore we refer to \cite{PSS24} for the construction and analysis of a transformed Milstein scheme for the approximation of one dimensional jump diffusion SDEs in a similar setting to ours and to \cite{PSS25} for matching lower error bounds.
	Furthermore we note, that recently in \cite{Y21} a Milstein-type method based on sequential evaluations of the underlying Brownian Motion was constructed and it was shown that this method achieves an $L_{p}$ error rate of at least $1$ based on $n$ sequentially chosen evaluations of $W$ on average. An $L_p$-error rate better  than $1$  can not be achieved in general  by no numerical method based on $n$ sequentially chosen evaluations of $W$ on average, see \cite{hhmg2019, m04} for matching lower error bounds. For jump-diffusion SDEs we refer to the very recent article \cite{S25}, where a scheme based on sequential evaluations of the underlying Brownian Motion was analysed in a similar setting.
	The extension of the upper bounds from \cite{Y21}  to the class of $d$-dimensional SDEs considered in the present article 
	using techniques developed in the article will be  the subject of future work.

	We briefly describe the content of the paper. Our error estimates, Theorem~\ref{Thm1} and Theorem~\ref{Thm2}, are stated in Section~\ref{ErrEs}. Section~\ref{Proofs} contains 
	proofs of these results. In Section~\ref{Exmpl} we present some examples. Section~\ref{Num} is devoted to numerical experiments. In Section~\ref{Appendix} we state some known results from differential geometry, Lemmas \ref{connect0} to \ref{a0}, that are used for our proofs in Section~\ref{Proofs}.
\section{Error estimates for the Milstein scheme} \label{ErrEs}

We briefly recall the notions of tangent vector, normal vector, orthogonal projection and reach from differential geometry as well as the notion of piecewise Lipschitz continuity that has been introduced in~\cite{LS17} and employed in~\cite{LS18} for the analysis of the Euler scheme.

Let $\emptyset\neq \Theta \subset \R^d$. If $x\in \Theta$ then $v\in \R^d$ is called a tangent vector to $\Theta$ at $x$ if there exist $\epsilon\in (0,\infty)$ and a $C^1$-mapping $\gamma\colon (-\epsilon,\epsilon) \to \Theta$ such that $\gamma(0)=x\text{ and }\gamma'(0) = v$.

A function $\nor\colon \Theta \to \R^{d}$ is called normal vector along $\Theta$ if $\nor$ is continuous with $\|\nor\| =1$ and  $\langle \nor(x),v\rangle = 0$ for every $x\in\Theta$ and every tangent vector $v$ to $\Theta$ at $x$, where $\|\cdot\|$ and $\langle\cdot, \cdot\rangle$ denote the euclidean norm and the euclidean scalar product, respectively.
The set $\Theta$ is called orientable if there exists a normal vector along $\Theta$.

The Lipschitz continuous mapping
\[
d(\cdot, \Theta)\colon\R^d\to [0,\infty),\,\, x\mapsto \inf\{\|y-x\|\colon y\in \Theta\}
\]
is called the distance function of $\Theta$. The set
\[
\unp(\Theta) = \{x\in \R^d\colon \exists_1 y\in\Theta\colon \|y-x\|=d(x,\Theta)\}
\]
consists of all points in $\R^d$ that have a unique nearest point in $\Theta$ and the mapping
\[
\pr_\Theta\colon \unp(\Theta)\to \Theta,\,\, x\mapsto \text{argmin}_{y\in \Theta} \|x-y\|
\]
is called the orthogonal projection onto $\Theta$.  
For  $\eps\in [0,\infty)$, the $\eps$-neighbourhood of $\Theta$ is given by the open set
\[
\Theta^\eps = \{x\in\R^d\colon d(x,\Theta)< \eps \},
\]
and the quantity
\[
\reach(\Theta) = \sup\{\eps\in [0,\infty)\colon \Theta^\eps\subset \unp(\Theta)\} \in [0,\infty]
\]
is called the reach of $\Theta$.

Next, recall that the length of a continuous function $\gamma\colon  [0,1] \to \R^{d}$ is defined by
\[
l(\gamma) = \sup \left\{ \sum_{k = 1}^{n} \|\gamma(t_{k}) - \gamma(t_{k-1})\| \colon 0 \leq t_{0} < \dots < t_{n} \leq 1,\, n\in \N   \right\} \in [0,\infty]
\]
and that for a nonempty subset $A$ of $\R^d$ the intrinsic metric $\rho_A\colon A\times A \to [0,\infty]$ is given by
\[ 
\rho_A(x,y) = \inf\{l(\gamma) \colon \gamma\colon  [0,1] \to A \text{ is continuous with } 
\gamma(0) = x \text{ and } \gamma(1) = y \},\quad x,y\in A.
\]
Let $\emptyset\neq A\subset D\subset \R^d$ and $m,k\in\N$. 
A  function $f\colon  D \to \R^{k\times m}$ is called intrinsic Lipschitz continuous on $A$, if there exists $L \in (0,\infty)$ such that  for all $x,y \in A$ we have $\|f(x) - f(y)\| \leq L \rho_A(x,y)$. In this case, $L$ is called an intrinsic Lipschitz constant for $f$ on $A$. If $A=D$ then $f$ is called intrinsic Lipschitz continuous.

A function $f\colon  \R^{d} \to \R^{k\times m}$  is called piecewise Lipschitz continuous if there exists a hypersurface $\Theta \subset \R^{d}$ such that $f$ is intrinsic Lipschitz continuous on $\R^d\setminus\Theta$. In this case, the hypersurface $\Theta$ is called exceptional set for $f$.

The aim of this paper is to show that a transformed Milstein scheme achieves a convergence rate of order at least $3/4-$ if  
the drift coefficient $\mu$ and the diffusion coefficient $\sigma$ satisfy the following conditions.
\begin{itemize}
	\item [(A)] There exists an orientable $C^{5}$-hypersurface $\Theta\subset \R^d$ of positive reach
	such that\\[-.3cm]
	\begin{itemize}
		\item[(i)] there exists  a normal vector $\nor$ along $\Theta$ and an open neighborhood $U\subset \R^d$ of $\Theta$ such that $\nor$ can be extended to a $C^4$-function $\nor\colon U\to\R^d$ that has bounded partial derivatives up to order $4$ on $\Theta$,\\[-.3cm]
		\item[(ii)] we have $\inf_{x\in \Theta} \|\normal(x)^\top \sigma(x)\| > 0$,\\[-.3cm]
		\item[(iii)] there exists   an open neighborhood $U\subset \R^d$ of $\Theta$ such that the function
		\[
		\alpha\colon \Theta\to\R^d,\,\, 	x\mapsto \lim_{h \downarrow 0}\frac{\mu(x-h\normal(x))- \mu(x+h\normal(x))}{2 \|\sigma(x)^\top\normal(x)\|^2}\\[.1cm]
		\]
		can be extended to a $C^4$-function $\alpha\colon U\to\R^d$ that has bounded partial derivatives up to order $4$ on $\Theta$,\\[-.3cm]
		\item[(iv)] $\mu_{|\R^{d}\setminus \Theta}$ and $\sigma_{|\R^{d}\setminus \Theta}$ are $C^{1}$.
		\item[(v)] there exists $\eps\in (0,\reach(\Theta))$ such that $\mu$ and $\sigma$ are bounded on $\Theta^{\eps}$,\\[-.3cm]
		\item[(vi)] the functions $\mu_{|\R^{d}\setminus \Theta}$, $(\mu_{|\R^{d}\setminus \Theta})'$ and $((\sigma_{j})_{|\R^{d}\setminus \Theta})'$ for $j \in \{1,\dots,d \}$ are intrinsic Lipschitz continuous.\\[-.3cm]
	\end{itemize}
\item[(B)] $\sigma$ is Lipschitz continuous.
\end{itemize}
For later use we add the following Lemma about Assumptions (A) and (B).
\begin{Lem} \label{propimp} Assume that $\mu$ and $\sigma$ fulfil assumptions (A)(ii),(iv),(vi) and B. Then the following hold
	\begin{itemize}
		\item[i)] There exists $K \in (0,\infty)$ such that for all $x \in \R$ we have
		\[
		\norm{\mu(x)} + \norm{\sigma(x)} \leq K (1+\norm{x})
		\]
		\item[ii)] The functions $\mu'$ and $\sigma'$ are bounded on $\R^{d}\setminus \Theta$.
		\item[iii)]  There exists $\epsilon \in (0,\infty)$ such that we have 
	\begin{align} \label{remcond0}
	\inf_{x\in \Theta^{\epsilon}} \|\normal(pr_{\Theta}(x))^\top \sigma(x)\| > 0
	\end{align}
	\end{itemize}
\end{Lem}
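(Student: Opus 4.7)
For (iii), I would use a perturbation argument around the infimum on $\Theta$. Set $c_0 := \inf_{x\in\Theta}\|\normal(x)^\top \sigma(x)\| > 0$, which is positive by (A)(ii), and let $L_\sigma$ denote a global Lipschitz constant of $\sigma$ from (B). For any $\epsilon \in (0,\reach(\Theta))$ and $x \in \Theta^\epsilon$ the projection $\pr_\Theta(x)$ is uniquely defined with $\|x-\pr_\Theta(x)\| < \epsilon$, and since $\|\normal(\pr_\Theta(x))\|=1$ the reverse triangle inequality yields
\[
\|\normal(\pr_\Theta(x))^\top \sigma(x)\| \geq \|\normal(\pr_\Theta(x))^\top \sigma(\pr_\Theta(x))\| - L_\sigma \epsilon \geq c_0 - L_\sigma \epsilon.
\]
Choosing $\epsilon := \min(\reach(\Theta)/2,\, c_0/(2L_\sigma))$ then gives the uniform positive lower bound.

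For (ii), I would treat $\sigma'$ and $\mu'$ separately. For $\sigma'$: (B) gives global Lipschitz continuity on $\R^d$ with constant $L_\sigma$, and (A)(iv) provides $C^1$ regularity on $\R^d \setminus \Theta$; hence the classical derivative exists there and is bounded by $L_\sigma$. For $\mu'$: (A)(vi) provides intrinsic Lipschitz continuity of $\mu$ on the open set $\R^d \setminus \Theta$ with some constant $L_\mu$, and (A)(iv) gives $C^1$ regularity. The key observation is that on any Euclidean ball $B(x,r) \subset \R^d \setminus \Theta$ (such an $r>0$ exists because the set is open), straight segments stay inside, so the intrinsic metric coincides with the Euclidean metric on $B(x,r)$. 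Thus intrinsic Lipschitz continuity there is a genuine local Lipschitz condition with the same constant, and passing to the derivative gives $\|\mu'(x)\| \leq L_\mu$.

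For (i), $\sigma$ inherits linear growth directly from (B) via $\|\sigma(x)\| \leq \|\sigma(0)\| + L_\sigma\|x\|$. The harder part is $\mu$, since intrinsic Lipschitz continuity on $\R^d \setminus \Theta$ is a priori strictly weaker than Lipschitz continuity in the Euclidean sense. My plan is to combine the intrinsic Lipschitz bound with a geometric estimate for the intrinsic metric: for a fixed base point $x_0 \in \R^d \setminus \Theta$ and any $x$ in the same connected component of $\R^d \setminus \Theta$, I would produce a path from $x_0$ to $x$ whose length is at most $\|x-x_0\| + D$, where $D$ is a uniform detour size determined by $\reach(\Theta)$. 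Such a path is obtained by taking the straight segment and, whenever it enters the tubular neighborhood of $\Theta$, replacing the crossing piece by a short arc lying on one side of $\Theta$, whose length is bounded in terms of the curvature (hence of $\reach(\Theta)$). Combined with the intrinsic Lipschitz bound $\|\mu(x)-\mu(x_0)\| \leq L_\mu \rho_{\R^d \setminus \Theta}(x,x_0)$, this yields linear growth on $\R^d \setminus \Theta$; the values on $\Theta$ itself are controlled via one-sided limits along the normal direction, which exist and are bounded by the same estimate.

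The main obstacle is the geometric detour estimate in (i): translating the local tubular-neighborhood structure provided by positive reach into a global additive-constant bound on the intrinsic metric within each connected component of $\R^d \setminus \Theta$. The other two parts are essentially exercises in Lipschitz perturbation and in the equivalence of intrinsic and Euclidean metrics on Euclidean balls disjoint from $\Theta$.
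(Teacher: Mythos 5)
Parts (ii) and (iii) of your proposal are correct and essentially match the paper's argument. For (ii) you observe that on any Euclidean ball contained in $\R^d\setminus\Theta$ the intrinsic and Euclidean metrics coincide, so intrinsic Lipschitz continuity of $\mu$ becomes a genuine local Lipschitz bound and yields a bound on $\mu'$; this is exactly what the paper does via Lemma~\ref{lipimpl0}(ii), and the $\sigma'$ case is immediate from (A)(iv) and (B). For (iii) you use the same Lipschitz perturbation of $\normal(\pr_\Theta(x))^\top\sigma(x)$ near $\Theta$; the paper only differs in the explicit choice of $\epsilon$.

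Part (i) is where your approach diverges from the paper, which simply cites \cite[Lemma~1]{MGYR24}, and your sketch has a genuine gap. The central claim --- that within one connected component of $\R^d\setminus\Theta$ one can join $x_0$ to $x$ by a path of length at most $\|x-x_0\|+D$ with $D$ ``a uniform detour size determined by $\reach(\Theta)$'' --- is false. Positive reach controls only local curvature and self-proximity; it says nothing about the global extent of $\Theta$, so the detour cost cannot be bounded by the reach. Concretely, take a smooth stadium-shaped closed curve in $\R^2$ with two long parallel flanks a distance $2a$ apart and semicircular caps of radius $a$: its reach is $a$, independent of the flank length $b$, yet passing from one side to the other in the unbounded exterior component forces a detour of order $b$, which is unbounded while $\reach(\Theta)$ stays fixed. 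Patching together a disjoint family of such curves whose flank lengths grow much faster than their mutual distances produces a single $C^5$-hypersurface of positive reach for which the intrinsic metric on the unbounded component grows superlinearly in $\|x\|$, so not even the weaker bound $\rho_{\R^d\setminus\Theta}(x,x_0)\le C(1+\|x\|)$ holds. Your detour argument therefore cannot by itself deliver the linear growth of $\mu$. (A secondary issue: intrinsic Lipschitz continuity of $\mu_{|\R^d\setminus\Theta}$ gives no control whatsoever over the values of $\mu$ on $\Theta$ itself, nor over how values compare across different connected components, so ``controlled via one-sided limits along the normal direction'' is not by itself a conclusion.) You need to invoke the cited result from \cite{MGYR24} or whatever additional structure it uses, rather than a purely geometric reach-based detour estimate.
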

\begin{proof}
	For part i) we refer to \cite[Lemma 1]{MGYR24}. For part (ii) we note, that the boundedness of $\sigma'$ on $\R^{d}\setminus\Theta$ follows immediately from (A)(iv) and (B). To deduce the boundedness of $\mu$ on $\R^{d}\setminus \Theta$ from (A)(iv) and (vi) let $L \in (0,\infty)$ be an intrinsic Lipschitz constant for $\mu$. As $\R^{d}\setminus \Theta$ is open, for $x \in \R^{d}\setminus \Theta$ there exists $\epsilon > 0$ such that $B_{\epsilon}(x) \subseteq \R^{d}\setminus \Theta$. As $B_{\epsilon}(x)$ is convex, the function $\mu_{|B_{\epsilon}(x)}$ is Lipschitz continuous with Lipschitz constant $L$ by (A)(vi) and Lemma \ref{lipimpl0}(ii). Thus $\norm{\frac{\partial \mu}{\partial x_{i}}} \leq L$ holds for $i \in \{1,\dots,d \}$.
	To prove part iii), put $\epsilon :=  \frac{\inf_{x\in \Theta} \|\normal(x)^\top \sigma(x)\|}{L+1}$, where $L$ is a Lipschitz constant for $\sigma$. Then for $x \in \Theta^{\epsilon}$ we have
	\begin{align*}
		\| \normal(\pr_{\Theta}(x))\sigma(x) \| &\geq \| \normal(\pr_{\Theta}(x))\sigma(\pr_{\Theta}(x)) \| - \| \normal(\pr_{\Theta}(x))(\sigma(x) - \sigma(\pr_{\Theta}(x))) \| \\
		&\geq (L+1)\epsilon - \| \sigma(\pr_{\Theta}(x)) - \sigma(x) \| \geq (L+1) \epsilon - L\| \pr_{\Theta}(x) - x \| \geq \epsilon.
	\end{align*}
	
\end{proof}
We briefly recall a well-known fact from differential geometry. 

Let $\Theta\subset\R^d$ be a $C^1$-hypersurface  of positive reach and a normal vector $\normal\colon \Theta\to\R^d$.
For $s\in\{+,-\}$ and $\eps\in (0,\reach(\Theta))$  put
\begin{equation}\label{u00}
	Q_{\eps,s} = \{x+s\lambda\normal(x)\colon x\in\Theta, \lambda\in (0,\eps) \}.
\end{equation}
Since $\Theta$ is an orientable $C^1$-hypersurface of positive reach it follows that $Q_{\eps,+}$ and $Q_{\eps,-}$ are open and disjoint with 
\begin{equation}\label{u01}
	\Theta^\eps \setminus \Theta = Q_{\eps,+}\cup Q_{\eps,-},
\end{equation}
see \app{Lemma~\ref{sides0}}. 

Using~\eqref{u01} we can prove the existence of the limit on the right hand side in condition (A)(iii), see Remark~\ref{remdisc}. For a proof we refer to \cite[Lemma 2]{MGYR24}.

\begin{Lem}\label{exlim} Assume that $\mu$  satisfies (A)(vi). Then for every $s\in\{+,-\}$ and every $x\in\Theta$, the limit 
	\[
	\lim_{h\downarrow 0} \mu (x+sh\normal(x))
	\]
	exists in $\R^d$.
\end{Lem}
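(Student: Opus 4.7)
The plan is to reduce the existence of the one-sided limit to a Cauchy criterion for $h\downarrow 0$, using that the straight segment $\{x+s\lambda\normal(x):\lambda\in(0,\eps)\}$ lies entirely in one connected piece of $\R^d\setminus\Theta$ on which $\mu$ is intrinsic Lipschitz continuous by assumption (A)(vi).

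More concretely, I would fix $s\in\{+,-\}$ and $x\in\Theta$, choose $\eps\in(0,\reach(\Theta))$, and invoke the decomposition $\Theta^{\eps}\setminus\Theta=Q_{\eps,+}\cup Q_{\eps,-}$ from Lemma~\ref{sides0}. By the very definition \eqref{u00} of $Q_{\eps,s}$, for every $\lambda\in(0,\eps)$ the point $x_\lambda:=x+s\lambda\normal(x)$ belongs to $Q_{\eps,s}$, and in fact the whole segment $\{x_\lambda:\lambda\in(0,\eps)\}$ is contained in $Q_{\eps,s}\subset\R^d\setminus\Theta$. Hence for any $h_1,h_2\in(0,\eps)$ the continuous path $\gamma(t)=x_{(1-t)h_1+th_2}$, $t\in[0,1]$, connects $x_{h_1}$ and $x_{h_2}$ inside $\R^d\setminus\Theta$ with length exactly $|h_1-h_2|\,\|\normal(x)\|=|h_1-h_2|$. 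Therefore
\[
\rho_{\R^d\setminus\Theta}(x_{h_1},x_{h_2})\leq |h_1-h_2|.
\]

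Next I would apply assumption (A)(vi): since $\mu_{|\R^d\setminus\Theta}$ is intrinsic Lipschitz continuous with some constant $L\in(0,\infty)$, this yields
\[
\|\mu(x_{h_1})-\mu(x_{h_2})\|\leq L\,\rho_{\R^d\setminus\Theta}(x_{h_1},x_{h_2})\leq L\,|h_1-h_2|
\]
for all $h_1,h_2\in(0,\eps)$. Thus the net $(\mu(x_h))_{h\in(0,\eps)}$ satisfies a Cauchy criterion as $h\downarrow 0$. By completeness of $\R^d$ the limit $\lim_{h\downarrow 0}\mu(x+sh\normal(x))$ exists, which is the claim.

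I do not expect any real obstacle: the only delicate point is to ensure that the straight radial segment actually lies in one of the two open pieces $Q_{\eps,\pm}$ (rather than merely in $\Theta^{\eps}\setminus\Theta$), but this is immediate from \eqref{u00} and the disjointness statement in Lemma~\ref{sides0}. Everything else is a one-line application of intrinsic Lipschitz continuity and completeness.
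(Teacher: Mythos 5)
Your proof is correct and complete: placing the radial segment in $Q_{\eps,s}$ via Lemma~\ref{sides0}, bounding the intrinsic distance along the straight path by $|h_1-h_2|$, and then invoking the intrinsic Lipschitz continuity from (A)(vi) to obtain a Cauchy criterion for $\mu(x+sh\normal(x))$ as $h\downarrow 0$ is exactly the natural argument here. The paper itself does not reproduce a proof but refers to \cite{MGYR24}, and your write-up is a faithful reconstruction of the approach that reference takes.
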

As mentioned above, these conditions (A) and (B) are very similar to conditions (A) and (B) from \cite{MGYR24}, with added regularity assumptions needed for the analysis of the transformed Milstein scheme.
We refer to Remarks 1, 2 and 3 from \cite{MGYR24} for motivation and interpretations of the quantities mentioned in these assumptions. We add the following remark, which is similar to Remark 4 in \cite{MGYR24}.
\begin{Rem}\label{remdisc}
	In the one-dimensional case, i.e. $d=1$, it is easy to check that the conditions (A) and (B) are equivalent to the following four conditions:
	\begin{itemize}
		\item[(i)] $\Theta \subset \R$ is countable with $\delta:=\inf\{|x-y|\colon x,y\in \Theta, x\neq y\} > 0$,
		\item[(ii)] $\sigma\colon\R\to\R$ is Lipschitz continuous with
		\[
		0 < \inf_{x\in \Theta} |\sigma(x)| \le  \sup_{x\in \Theta} |\sigma(x)| < \infty,
		\]
		\item[(iii)] $\mu\colon\R\to\R$ and $\sigma \colon \R \to \R$ are $C^{1}$ on each of the countable intervals $(x,y) \subset \R$ with $x,y\in \Theta\cup\{-\infty,\infty\}$ and $(x,y)\cap \Theta = \emptyset$
		\item[(iv)] $\mu$, $\mu'$ and $\sigma'$ are Lipschitz continuous on each of the countable intervals $(x,y) \subset \R$ with $x,y\in \Theta\cup\{-\infty,\infty\}$ and $(x,y)\cap \Theta = \emptyset$ and for every $x\in\Theta$  there exist $y_x,z_x\in \R$ with $x-\delta < y_x < x< z_x < x+\delta$ such  that
		\[
		\sup_{x\in \Theta} ( |\mu(x)| + |\mu(y_x)| + |\mu(z_x)|) < \infty.
		\]
		
	\end{itemize}
	A particular instance where (i)-(iv) are fulfilled is given by $\Theta = \{x_1,\dots,x_K\}$ with $-\infty = x_0 < x_1 < \dots < x_K < x_{K+1} = \infty$ such that $\sigma$ is Lipschitz continuous with $\sigma(x_k)\neq 0 $ for every $k\in \{1,\dots,K\}$ and $\mu$,$\mu'$ as well as $\sigma'$ exist and are Lipschitz continuous on $(x_k,x_{k+1})$ for every $k\in \{0,\dots,K\}$. The latter setting is studied in \cite{MGY19b}.
\end{Rem}
We note, that it was proven in \cite{LS17}, that under weaker assumptions than (A) and (B) a unique strong solution of \eqref{SDE} exists.

In order to analyse the transformed Milstein scheme which will be introduced later, we will first prove that under the following assumptions the Milstein scheme itself behaves as desired.
\begin{itemize}
	\item[(C)] There exist $C^{3}$-hypersurfaces $\Theta, \Delta \subset \R^d$ of positive reach such that
	\begin{itemize}
		\item[(i)] For $\Gamma \in \{\Theta,\Delta \}$ there exists a normal vector $\nor$ along $\Gamma$ such that we have \\
		$\inf_{x\in \Gamma} \|\nor(x)^\top \sigma(x)\| > 0$,
		\item[(ii)] the functions $\mu_{|\R^{d}\setminus \Theta}$ and $\sigma_{|\R^{d}\setminus \Delta}$ are $C^{1}$,
		\item[(iii)] there exists $\epsilon\in (0,\infty)$ such that $\sigma$ is bounded on $\Theta^{\epsilon}$ and on $\Delta^{\epsilon}$,
		\item[(iv)] the functions $(\mu_{|\R^{d}\setminus \Theta})'$ and $((\sigma_{j})_{|\R^{d}\setminus \Delta})'$ for $j \in \{1,\dots,d \}$ are intrinsic Lipschitz continuous.
	\end{itemize}
	\item[(D)] $\mu$ and $\sigma$ are Lipschitz continuous.
\end{itemize}
We turn to our results. Assume that there exists a $C^{3}$-hypersurface $\Delta$ of positive reach such that $\sigma$ is $C^{1}$ on $\R^{d}\setminus \Delta$.
For $n\in\N$ let $\eul_{n}=(\eul_{n,t})_{t\in[0,1]}$  denote  the time-continuous Milstein scheme with step-size $1/n$ associated to the SDE \eqref{SDE}, i.e. $\eul_{n}$ is recursively given by
$\eul_{n,0}=x_0$ and
\begin{equation} \label{MilSc}
\eul_{n,t}=\eul_{n,i/n}+\mu(\eul_{n,i/n})\, (t-i/n)+\sigma(\eul_{n,i/n})\, (W_t-W_{i/n}) + \sum_{j_{1},j_{2} = 1}^{d} (\partial\sigma_{j_{2}} \cdot \sigma_{j_{1}})(\eul_{n,\underline{t}_{n}}) J_{j_{1},j_{2}}^{n}(t) 
\end{equation}
for $t\in (i/n,(i+1)/n]$ and $i\in\{0,\ldots,n-1\}$,
where for $x \in \R^{d}$ and $j \in \{1,\dots,d\}$ we write $\sigma_{j}(x)$ for the $j$-th column of $\sigma(x)$, where 
\begin{align} \label{diffdef}
	\partial\sigma_{j}(x) = \begin{cases}
		 \sigma_{j}'(x), & \text{ if } x \in \R^{d}\setminus \Delta \\
		 0, & \text{otherwise}.
	\end{cases}
\end{align}
with $\sigma_{j}'$ the derivative of $\sigma_{j}$ and where for $j_{1},j_{2} \in \{1,\dots,d \}$ and $n \in \N$ we define  
\[
	J_{j_{1},j_{2}}^{n}(t) := \int_{\underline{t}_{n}}^{t} W_{j_{1},s} - W_{j_{1},\underline{t}_{n}} \, dW_{j_{2},s}
\]

We have the following error estimate for $\eul_{n}$ under assumptions (C) and (D)

\begin{Thm}\label{Thm2} Assume that $\mu$ and $\sigma$ satisfy $(C)$ and $(D)$.
	For every $p\in [1,\infty)$ and every $\delta\in (0,\infty)$ there exists $c\in(0, \infty)$ such that for all $n\in\N$, 
	\begin{equation}\label{l3}
		\bigl(\EE\bigl[\|X-\eul_{n}\|_\infty^p\bigr]\bigr)^{1/p}\leq \frac{c}{n^{3/4-\delta}}. 
	\end{equation}
	If additionally $\Delta = \emptyset$, then there exists $c\in(0, \infty)$ such that for all $n\in\N$, 
	\begin{equation}\label{l4}
		\bigl(\EE\bigl[\|X-\eul_{n}\|_\infty^p\bigr]\bigr)^{1/p}\leq \frac{c}{n^{1-\delta}}. 
	\end{equation}
\end{Thm}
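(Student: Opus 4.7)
My plan is to run the classical Milstein strong-error analysis on the "good" event where the one-step segment avoids the exceptional hypersurfaces, and to treat the complementary "bad" event via an occupation-time estimate exploiting the non-degeneracy (C)(i). Set $Z_{n,t}=X_t-\eul_{n,t}$; inserting and subtracting $\mu(\eul_{n,s})$ and $\sigma(\eul_{n,s})$ in the It\^o representation of $X-\eul_{n}$ and using the global Lipschitz assumption (D) together with Burkholder--Davis--Gundy and Gronwall, $(\EE\|Z_n\|_\infty^p)^{1/p}$ is reduced to the $L_p$-size of the two local error processes
\[
E_\mu(s)=\mu(\eul_{n,s})-\mu(\eul_{n,\underline{s}_n}),\qquad
E_\sigma(s)=\sigma(\eul_{n,s})-\sigma(\eul_{n,\underline{s}_n})-\sum_{j_1,j_2=1}^{d}(\partial\sigma_{j_2}\sigma_{j_1})(\eul_{n,\underline{s}_n})(W_{j_1,s}-W_{j_1,\underline{s}_n}),
\]
integrated respectively in $ds$ and against $dW_s$.

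For $\Gamma\in\{\Theta,\Delta\}$ let $A_\Gamma(s)$ denote the event that the segment $[\eul_{n,\underline{s}_n},\eul_{n,s}]$ meets $\Gamma$. Off $A_\Gamma$ a first-order Taylor expansion of the relevant coefficient along that segment is valid and, combined with the intrinsic Lipschitz continuity of $\mu'$ and $\sigma_j'$ from (C)(iv), produces the classical $O(n^{-1})$ Milstein bound: for $E_\mu$ via the standard $\int_{\underline{s}_n}^{s}(W_u-W_{\underline{s}_n})\,du$-martingale cancellation, and for $E_\sigma$ because the Milstein correction is exactly the leading $\int\!\int dW\,dW$ term of the Taylor expansion. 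On $A_\Gamma$ only the Lipschitz bound $\|E_\mu\|+\|E_\sigma\|\lesssim\|W_s-W_{\underline{s}_n}\|+n^{-1}$ is available; BDG applied to $\int_0^t E_\sigma\one_{A_\Delta}\,dW_s$ together with H\"older yields, schematically,
\[
\EE\sup_{t\le 1}\Big\|\int_0^t E_\sigma\one_{A_\Delta}\,dW_s\Big\|^p\;\le\;C\,n^{-p/2}\,\Big(\EE\int_0^1\one_{A_\Delta(s)}\,ds\Big)^{\beta}
\]
for a suitable $\beta>0$, at the cost of an $n^\delta$ factor absorbing the technical moment manipulations; the $\int_0^t E_\mu\one_{A_\Theta}\,ds$ piece is handled analogously by Cauchy--Schwarz. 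The proof is thus reduced to an occupation-time bound of the form
\[
\EE\int_0^1\one_{\{\eul_{n,\underline{s}_n}\in\Gamma^{\epsilon}\}}\,ds\;\le\;C(\epsilon+n^{-1/2}),\qquad \epsilon\in(0,\reach(\Gamma)),
\]
from which the rate $n^{-3/4+\delta}$ follows by balancing $\epsilon=\epsilon_n\sim n^{-1/2}$.

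To establish this occupation-time bound I would use the $C^3$-regularity and positive reach of $\Gamma$ to extend $\normal$ and $\pr_\Gamma$ to a tubular neighbourhood and transfer the non-degeneracy $\|\normal^\top\sigma\|\ge c>0$ from $\Gamma$ to $\Gamma^\epsilon$ via Lemma~\ref{propimp}(iii). Projecting the Milstein one-step increment onto $\normal(\pr_\Gamma(\eul_{n,\underline{s}_n}))$ shows that the signed-distance process $\langle\normal(\pr_\Gamma(\eul_n)),\eul_n-\pr_\Gamma(\eul_n)\rangle$ behaves, up to curvature corrections of size $O(n^{-1})$, like a one-dimensional process with non-degenerate Gaussian increments of scale $\ge c/\sqrt n$; a one-dimensional density / Krylov-type estimate for the marginal then gives the claim.

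In the special case $\Delta=\emptyset$, $\sigma$ is globally $C^1$ with intrinsic Lipschitz derivative, so $E_\sigma$ already contributes the full $O(n^{-1})$ without any crossing analysis; only $E_\mu$ requires the bad-event treatment, and since it enters under a $ds$-integral rather than an It\^o integral, Cauchy--Schwarz furnishes the missing $n^{-1/2}$ factor, giving the improved rate $n^{-1+\delta}$ in \eqref{l4}. The main obstacle I expect is precisely the multidimensional occupation-time bound: the rigorous reduction of $\{\eul_n\in\Gamma^\epsilon\}$ to a one-dimensional non-degenerate problem requires careful differential-geometric bookkeeping (using the appendix lemmas and the $C^3$-regularity of $\Gamma$) to handle the curvature terms introduced by $\pr_\Gamma$ and the Milstein-induced cross terms, both absent from the one-dimensional setting of~\cite{MGY19b}.
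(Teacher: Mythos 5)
Your decomposition and overall strategy closely match the paper's: you split the one-step error off and on the "crossing" event, use a Taylor expansion with the intrinsic Lipschitz continuity of the derivatives on the good event, use the global Lipschitz bound plus an occupation-time estimate on the bad event, and finish with BDG and Gronwall; the $U_{n,t}$-type martingale cancellation you allude to for $E_\mu$ is exactly the device the paper uses. The route to the occupation-time estimate differs at a technical level: the paper builds the auxiliary $C^2$-function $g$ of Proposition~\ref{gprop}, writes $Y_n=g\circ\eul_n$ as a continuous semimartingale via the explicit integral representation of the Milstein scheme, and applies the Tanaka formula / local-time occupation-time formula, whereas you propose projecting onto the normal direction and invoking a Krylov-type density estimate. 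Both exploit the same non-degeneracy, but the paper's semimartingale route avoids having to establish a Krylov bound for the Milstein scheme itself (whose one-step kernel, because of the iterated integral correction, is not a Gaussian), which your sketch leaves unresolved.

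The more substantive gap is the $L_p$ version of the occupation-time bound. Your key inequality is stated with $\EE\int_0^1\one_{A_\Gamma(s)}\,ds$, i.e.\ an $L_1$ control. However, after BDG the bad-event diffusion term leaves you with
$\EE\bigl(\int_0^1 E_\sigma^2\,\one_{A_\Delta(s)}\,ds\bigr)^{p/2}$, and once you pull out $\sup_s\|E_\sigma\|^p$ by H\"older you are left needing $\bigl\|\int_0^1\one_{A_\Delta(s)}\,ds\bigr\|_{L_q}$ for a $q$ comparable to $p$, not its mean. For general $p\in[1,\infty)$ this upgrade is not free: the integrand is a strongly path-dependent indicator, and passing from an $L_1$ rate to an $L_p$ rate is exactly the content of the paper's Proposition~\ref{occtime3}, whose proof is a genuine bootstrapping argument (Lemma~\ref{normbd2}) resting on the Markov property of the Milstein scheme (Lemma~\ref{markov}) and an iterated conditioning over nested time intervals. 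Establishing that Markov property itself requires care here because the Milstein scheme depends on iterated It\^o integrals, not just on $W$ evaluated at grid points (Lemma~\ref{replem} in the paper). Writing ``at the cost of an $n^\delta$ factor absorbing the technical moment manipulations'' does not close this; the exponent in the final bound genuinely depends on having the $L_p$, not the $L_1$, occupation-time estimate. You would need to add the $L_p$ upgrade explicitly, along the lines of Lemmas~\ref{markov}, \ref{replem}, \ref{normbd}, \ref{normbd2} and Proposition~\ref{occtime3}, before your Gronwall argument delivers \eqref{l3} for all $p$.
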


With the help of the aforementioned transform we will show that the following theorem is a consequence of Theorem \ref{Thm2}.

\begin{Thm}\label{Thm1} Assume that $\mu$ and $\sigma$ satisfy $(A)$ and $(B)$. Then there exists a sequence of measurable functions $\phi_{n}\colon (\R^{d})^{n}\times(\R^{d \times d})^{n} \to \R^{d}$ such that
	for every $p\in [1,\infty)$ and every $\delta\in (0,\infty)$ there exists $c\in(0, \infty)$ such that for all $n\in\N$, 
	\begin{equation*}
		\bigl(\EE\bigl[\|X_{1}-\phi_{n}(W_{\frac{1}{n}},W_{\frac{2}{n}},\dots,W_{1},J^{n}(0),\dots,J^{n}(n-1))\|^p\bigr]\bigr)^{1/p}\leq \frac{c}{n^{3/4-\delta}}. 
	\end{equation*}
	where for $k \in \{0,\dots,n-1 \}$
	\[
	J^{n}(k) = [J_{i,j}^{n}(k)]_{i,j = 1}^{d}.
	\] 
	If $\sigma$ additionally fulfils the commutativity condition
	\[
	(\sigma_{j_{1}})'(x)\sigma_{j_{2}}(x) = (\sigma_{j_{2}})'(x)\sigma_{j_{1}}(x) \text{ for } x \in \R^{d}\setminus\Theta \text{ and } j_{1},j_{2} \in \{1,\dots,d \}
	\]
	then there exists a sequence of measurable functions $\psi_{n}\colon (\R^{d})^{n} \to \R^{d}$ such that
	for every $p\in [1,\infty)$ and every $\delta\in (0,\infty)$ there exists $c\in(0, \infty)$ such that for all $n\in\N$, 
	\begin{equation*}
		\bigl(\EE\bigl[\|X_{1}-\psi_{n}(W_{\frac{1}{n}},W_{\frac{2}{n}},\dots,W_{1})\|^p\bigr]\bigr)^{1/p}\leq \frac{c}{n^{3/4-\delta}}. 
	\end{equation*}
\end{Thm}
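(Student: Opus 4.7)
The plan is to reduce Theorem~\ref{Thm1} to Theorem~\ref{Thm2} via a bi-Lipschitz transformation $G\colon \R^d\to\R^d$ that equals the identity outside a neighborhood of $\Theta$ and that removes the drift discontinuity at $\Theta$. I would follow the construction of Leobacher--Sz\"olgyenyi~\cite{LS17}, in which $G$ is built from the signed distance to $\Theta$, the $C^4$-extensions of $\nor$ and $\alpha$ provided by (A)(i) and (A)(iii), and a compactly supported bump function, but with the bump function taken one derivative smoother than in~\cite{LS17} because the Milstein analysis in Theorem~\ref{Thm2} requires the transformed diffusion coefficient to be $C^1$ with intrinsic Lipschitz derivative on $\R^d\setminus G(\Theta)$. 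The $C^5$-regularity of $\Theta$ and the $C^4$-extensions of $\nor$ and $\alpha$ in (A) are precisely what is needed to ensure that $G$ has three bounded intrinsic derivatives on each side of $\Theta$; the defining property of $\alpha$ in (A)(iii) guarantees that the drift jump of $\mu$ across $\Theta$ is cancelled by $\tfrac12 \operatorname{tr}(G''\sigma\sigma^\top)$ in the It\^o expansion, rendering the transformed drift Lipschitz continuous.

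By an It\^o-type calculation on each side of $\Theta$ together with the jump-matching across $\Theta$ from~\cite{LS17}, $Y_t := G(X_t)$ solves
\[
dY_t = \tilde\mu(Y_t)\,dt + \tilde\sigma(Y_t)\,dW_t
\]
with $\tilde\sigma(y)=G'(G^{-1}(y))\sigma(G^{-1}(y))$ and $\tilde\mu$ the corresponding It\^o-transformed drift. Using the explicit structure of $G$ and assumptions (A) and (B), I would verify that $\tilde\mu$ and $\tilde\sigma$ are globally Lipschitz on $\R^d$, $C^1$ on $\R^d\setminus G(\Theta)$ with intrinsic Lipschitz derivatives there, and that $\tilde\sigma$ is non-degenerate on a neighborhood of $G(\Theta)$ thanks to (A)(ii). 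Hence $(\tilde\mu,\tilde\sigma)$ satisfies (C) and (D) with both exceptional sets equal to $G(\Theta)$, and Theorem~\ref{Thm2} applied to $Y$ yields a time-continuous Milstein scheme $\eul_{n}$ with $L_p$-error rate at least $3/4-\delta$. By~\eqref{MilSc}, $\eul_{n,1}$ is a measurable function $\Phi_n$ of $W_{1/n},\ldots,W_1$ and $(J^n(i))_{i=0}^{n-1}$; setting $\phi_n := G^{-1}\circ\Phi_n$ and using the Lipschitz continuity of $G^{-1}$ transfers the $L_p$-bound from $Y_1$ to $X_1=G^{-1}(Y_1)$.

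For the commutativity case, a direct differentiation of $\tilde\sigma_j=G'(G^{-1})\,\sigma_j(G^{-1})$ combined with the commutativity of $\sigma$ on $\R^d\setminus\Theta$ shows that $\tilde\sigma$ satisfies $(\tilde\sigma_{j_1})'\tilde\sigma_{j_2}=(\tilde\sigma_{j_2})'\tilde\sigma_{j_1}$ on $\R^d\setminus G(\Theta)$; this is the standard invariance of vanishing Lie brackets under $C^2$-diffeomorphisms. Under this condition, the Milstein correction in~\eqref{MilSc} only sees the symmetric combinations $\tfrac12(J^n_{j_1,j_2}+J^n_{j_2,j_1})$, which by the It\^o product formula equal $\tfrac12(W_{j_1,t}-W_{j_1,\underline{t}_n})(W_{j_2,t}-W_{j_2,\underline{t}_n})-\tfrac12\one_{\{j_1=j_2\}}(t-\underline{t}_n)$ and are therefore measurable functions of $W_{1/n},\ldots,W_1$ alone. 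Consequently $\eul_{n,1}$ is a measurable function $\Psi_n$ of $W_{1/n},\ldots,W_1$, and $\psi_n := G^{-1}\circ\Psi_n$ satisfies the second claim. I expect the main obstacle to be the verification that $\tilde\mu$ and $\tilde\sigma$ are globally Lipschitz and that $\tilde\mu'$, $\tilde\sigma'$ are intrinsic Lipschitz on $\R^d\setminus G(\Theta)$: this requires careful bookkeeping of how derivatives of the bump function, $\nor$, $\pr_\Theta$ and $\alpha$ combine in $G'$ and $G''$, and is the reason why (A) demands $C^5$-regularity of $\Theta$ together with $C^4$-extensions of $\nor$ and $\alpha$ rather than the weaker regularity used in~\cite{LS17}.
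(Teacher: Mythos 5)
Your proposal reproduces the paper's argument essentially step for step: construct the transform $G$ as a smoother variant of the Leobacher–Sz\"olgyenyi map (the paper takes $\phi(x)=(1-x^2)^5$ instead of the fourth power used in \cite{LS17}), apply a generalized It\^o formula (the paper's Corollary \ref{ItoE}, built on Theorem \ref{Ito_new}) to identify $G(X)$ as a strong solution of an SDE with coefficients $\mu_G,\sigma_G$, verify via Proposition \ref{transcomp} that these satisfy (C) and (D) with $\Delta=\Theta$, invoke Theorem \ref{Thm2} for the transformed Milstein scheme $\widehat Z_n$, and push back through the Lipschitz map $G^{-1}$ to get $\phi_n=G^{-1}\circ\widehat Z_{n,1}$; the commutativity claim is handled by the paper's Lemma \ref{com}, which is precisely the push-forward invariance of vanishing Lie brackets under the $C^2$-diffeomorphism $G$ that you cite, combined with the standard It\^o-product identity for the symmetrized iterated integrals. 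The only cosmetic difference is that you package the commutativity preservation as a general Lie-bracket fact while the paper verifies it by a direct product-rule computation; otherwise the decomposition, the key lemmas, and the order of steps coincide with the paper's.
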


\section{Proofs} \label{Proofs}

We briefly describe the structure of the proof.
In order to prove Theorem \ref{Thm1}, a transformation mapping is introduced in Subsection \ref{trans}, which is used to
transform the SDE \eqref{SDE} under Assumptions (A) and (B) to an SDE with coefficients
which fulfil Assumptions (C) and (D). The transformation used here differs from the one used
in \cite{LS17}, hence this subsection contains the proofs of a variety of properties of the transformation
which are used in the text. The subsequent subsections are then dedicated to analysing the
time continuous Milstein-scheme in the transformed setting, i.e. a setting with coefficients which fulfil Assumptions (C) and (D). First some properties of the coefficients have to be
deduced from the assumptions (C) and (D), which is done in Subsection \ref{propcoeff}.
In Subsection \ref{occtimechap}, necessary moment estimates
as well as a Markov-property for the time continuous Milstein-scheme $\eul_{n}$ are provided. Furthermore the time the process
$\eul_{n}$ spends in a neighborhood of $\Theta$ is analysed.
All this is then put to use when it is shown that under Assumptions (C) and (D), for all $p \in [1,\infty)$ there
exists a constant $c \in (0,\infty)$ such that 
\begin{equation} 
	\EE\biggl[\Bigl|	\int_{0}^{1} \one_{\{d(\eul_{n,t}, \Gamma)\le\|\eul_{n,t} - \eul_{n,\utn}\| \}}\, dt\Bigr|^p\biggr]^{1/p} \leq \frac{c}{n^{\frac{1}{2}-\delta}}.
\end{equation}
for all $n \in \N$ and $\Gamma \in \{\Theta,\Delta \}$.
This is a crucial tool in the analysis of the Milstein scheme in this setting.

\subsection{Notation}
Let $d,m\in\N$. 

For a matrix $A\in\R^{d\times m}$ we use $\|A\|$ to denote the Frobenius norm of $A$, $A_{j}$ to denote the $j$-th column of $A$ for $j \in \{1,\dots,m\}$, $A^{i}$ to denote the $i$-th row of $A$ for $i \in \{1,\dots,d\}$.
In the case $ d=m$ we use $\tr(A)$ to denote the trace of $A$ and $\text{det}(A)$ to denote the determinant of $A$. 
For $x,y\in\R^d$ we use  $\langle x,y\rangle$ to denote the Euclidean scalar product of $x$ and $y$ and
$ \overline{x,y} = \{\lambda x + (1-\lambda)y \colon  \lambda \in [0,1] \}
\subset \R^d
$ to denote the straight line connecting  $x$ and $y$. For $r\in [0,\infty)$ and $x\in \R^d$ we use $B_{r}(x) = \{y \in \R^{d} \colon \|x-y\| < r \}$ to denote the open ball and $\overline{B}_{r}(x) = \{y \in \R^{d} \colon \|x-y\| \leq r \}$ to denote the closed ball with center $x$ and radius $r$, respectively.

For a set $U \subset \R^{d}$ we write $\inter(U)$, $\cl(U)$ and $\partial U$ for the interior, the closure and the boundary of $U$, respectively. For  a function $f \colon U \to \R^{m}$ and a set 
$ M\subset U$ 
we use $\|f\|_{\infty,M} = \sup\{\|f(x)\|\colon x\in M\}$ to denote the supremum of 
 the values of 
$\|f\|$ on $M$.

For a multi-index $\alpha=(\alpha_1,\dots,\alpha_d)
^\top
\in \N_0^d$ we put $|\alpha| = \alpha_1+\dots + \alpha_d$.
For a set $ U\subset \R^d$, an open set  $\emptyset\neq M\subset U$, $k\in\N_0$ and a function $f=(f_1,\dots,f_m)^\top \colon U \to \R^{m}$, which is a $C^{k}$-function on $M$, we put
\[ 
\|f^{(\ell)}(x)\|_\ell = \max_{i\in\{1,\dots,m\}}\max_{\alpha \in \N_0^{d}, \abs{\alpha} = \ell} |f_i^{(\alpha)}(x) |
\] 
for $\ell \in \{0,1,\dots,k \}$ and $x \in M$.  
If $k\ge 1$ then we use
\[
f'\colon M\to \R^{m\times d},\,\, x\mapsto \Bigl(\frac{\partial f_i}{\partial x_j}(x)\Bigr)_{\substack{1\le i\le m \\ 1\le j \le d}}\in\R^{m\times d}
\]
to denote the first derivative of $f$ on $M$. If $m=1$ and $k\ge 2$ then we use 
\[
f''\colon M\to \R^{d\times d},\,\, x\mapsto \Bigl(\frac{\partial^2 f}{\partial x_{j_1} \partial x_{j_2}}(x)\Bigr)_{1\le j_1,j_2\le d}\in\R^{d\times d}
\]
to denote the second derivative of $f$ on $M$.	

\subsection{The Transformation Procedure}\label{trans}
For our proofs, we rely on the existence of a transformation mapping which removes the discontinuity from the drift coefficient. This type of transform was first constructed by Gunther Leobacher and Michaela Szölgyenyi and was first introduced in \cite{LS17} and also used in \cite{MGYR24} for the analysis of the Euler-Maruyama scheme. It is easy to see that the conditions (A) and (B) from this text imply the assumptions from \cite{MGYR24} which are essentially the same as the ones in \cite{LS17}. Here a slightly modified version of the transform from \cite{LS17} is introduced. This modification is required to ensure that the transformed coefficients fulfil the regularity assumptions formulated in (C) and (D). Due to this modification, a proof of all the properties of the transform formulated in the following proposition are included in this text, even though assertions (i)-(v) and parts of (vi) of this proposition where already proven in \cite{MGYR24} for the transform introduced there and the proofs work very similarly here. We will refer to \cite{MGYR24} for proofs which transfer word by word.
\begin{Prop} \label{grep}
	Assume that $\mu$ and $\sigma$ satisfy (A) and (B) and let $\Theta\subset \R^d$ be an orientable $C^5$-hypersurface of positive reach  according to (A). Then there exists a function $G\colon\R^d\to\R^d$ with the following properties.
	\begin{itemize}
		\item[(i)] $ G$ is a $C^{1}$-diffeomorphism.
		\item[(ii)] $G$, $G^{-1}$, $G'$, $(G^{-1})' $ are Lipschitz continuous and $G'$, $(G^{-1})' $ are bounded.
		\item[(iii)] $G=(G_1,\dots,G_d)^\top$ and $G^{-1}=(G^{-1}_1,\dots,G^{-1}_d)^\top$ are $C^2$-functions on $\R^{d} \setminus \Theta$ with bounded, intrinsic Lipschitz continuous, second derivatives
		$G''_i, (G^{-1}_i)''\colon \R^{d} \setminus \Theta \to \R^{d\times d}$ for all $i\in\{1,\dots,d\}$.
		\item[(iv)] The function 
		\[
		\sigma_{G} =(G'\,\sigma)\circ G^{-1} \colon \R^{d} \to \R^{d\times d}
		\]
		is Lipschitz continuous with $	\sigma_G(x) = \sigma(x)$ for every $x\in\Theta$.
		\item[(v)] For every $i\in\{1,\dots,d\}$, the second derivative $G''_i \colon \R^{d} \setminus \Theta \to \R^{d\times d}$ of $G_{i}$ on $\R^d\setminus \Theta$ can be extended to a bounded mapping $ R_i\colon \R^d \to \R^{d\times d}$, such that the function
		\[
		\mu_G= \Bigl(G'\,\mu + \frac{1}{2} \bigl( \tr\bigl(R_i\,\sigma\sigma^{\top}\bigr))_{1\le i \le d}\Bigr)\circ G^{-1}\colon\R^d\to\R^d
		\]
		is Lipschitz continuous.
		\item[(vi)] $G_{|\R^{d}\setminus \Theta}$ is $C^{4}$.
		\item[(vii)] All partial derivatives of $G_{|\R^{d}\setminus \Theta}$ up to fourth order are bounded
		\item[(viii)] The functions $\mu_{G}$ and $\sigma_{G}$ are $C^{1}$ on $\R^{d}\setminus \Theta$ and $\mu_{G}'$ as well as $((\sigma_{G})_{i})'$ for $i \in \{1,\dots,d \}$ are intrinsic Lipschitz on $\R^{d} \setminus \Theta$
	\end{itemize}
\end{Prop}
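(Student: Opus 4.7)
The plan is to construct $G$ as a perturbation of the identity supported in an $\eps$-neighborhood of $\Theta$, refining the Leobacher--Szölgyenyi transform from \cite{LS17, MGYR24} so that the correction term is $C^4$ on each side of $\Theta$. Fix $\eps\in(0,\reach(\Theta))$ small enough that, by (A)(v), the extensions in (A)(i),(iii), and by Lemma~\ref{propimp}(iii), the maps $\nor$, $\alpha$, $\pr_\Theta$, and the signed distance
\[
d_s\colon \Theta^\eps \to \R,\qquad x+\lambda\nor(x)\mapsto \lambda,
\]
(well defined by \eqref{u00}--\eqref{u01}) are all $C^4$ on $\Theta^\eps\setminus\Theta$ with bounded partial derivatives up to order $4$ on $\Theta$; here I use the standard fact that on $\R^d\setminus\Theta$ the projection and signed distance inherit regularity $C^{k-1}$ from a $C^k$-hypersurface of positive reach. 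Pick a $C^\infty$ cutoff $\psi\colon[0,\infty)\to[0,1]$ with $\psi\equiv 1$ on $[0,\eps/3]$ and $\supp(\psi)\subset[0,2\eps/3]$, and choose $\phi\colon\R\to\R$ of class $C^1$, $C^4$ on $\R\setminus\{0\}$, with $\phi(0)=\phi'(0)=0$, bounded together with its higher derivatives, and one-sided second derivatives $\phi''(0\pm)=\pm1$ (e.g.\ a smoothed variant of $\tfrac12 u\abs{u}$). Define
\[
G(x) = x + \phi(d_s(x))\,\psi(d(x,\Theta))\,\alpha(\pr_\Theta(x))
\]
on $\Theta^\eps$ and $G(x)=x$ outside, so $G=\id$ off a bounded neighborhood of $\Theta$.

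For (i)--(ii), I would first check that $G'(x)=I$ for every $x\in\Theta$, because $\phi(0)=\phi'(0)=0$ and the gradients of $\psi(d(\cdot,\Theta))$ and $\alpha\circ\pr_\Theta$ remain bounded. By shrinking $\eps$ and using the continuity of $G'$ on each side of $\Theta$, one makes $\norm{G'-I}$ arbitrarily small throughout $\Theta^\eps$, so the Hadamard/global inverse function theorem yields a $C^1$-diffeomorphism. The Lipschitz properties of $G,G^{-1},G',(G^{-1})'$ and the boundedness of $G',(G^{-1})'$ then follow from the boundedness of the $C^4$ building blocks, together with the uniform matching of one-sided derivatives across $\Theta$.

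Property (iv) drops out because $G|_\Theta=\id_\Theta$ and $G'|_\Theta=I$ yield $\sigma_G|_\Theta=\sigma|_\Theta$, while Lipschitz continuity follows from (B), boundedness of $G'$, and Lipschitz continuity of $G'$ on each connected component of $\R^d\setminus\Theta$ with matching boundary values. The key step is (v): a direct computation on $\Theta^\eps\setminus\Theta$ shows that the second derivative of $G_i$ decomposes as
\[
G_i''(x) = \phi''(d_s(x))\,\psi(d(x,\Theta))\,\alpha_i(\pr_\Theta(x))\,\nabla d_s(x)\nabla d_s(x)^\top + R_i^0(x),
\]
where $R_i^0$ is bounded on $\R^d\setminus\Theta$ and extends continuously to $\Theta$. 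Since $\nabla d_s(x)\to\pm\nor(\pr_\Theta(x))$ as $x\to\Theta$ from the two sides and $\phi''(0\pm)=\pm1$, the one-sided limits of $G_i''$ on $\Theta$ differ by exactly $2\,\alpha_i(y)\,\nor(y)\nor(y)^\top$ for $y\in\Theta$. Defining $R_i$ by either one-sided limit on $\Theta$ then gives a bounded extension, and the half-jump of $\tfrac12\tr(R_i\sigma\sigma^\top)$ across $\Theta$ equals $\alpha_i\,\norm{\sigma^\top\nor}^2$; by the very definition of $\alpha$ in (A)(iii), this cancels the jump of $(G'\mu)_i = \mu_i$ across $\Theta$, so $\mu_G$ extends continuously. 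Lipschitz continuity of $\mu_G$ then follows from the uniform boundedness of $\mu_G'$ on each side, established once (vii) is in hand.

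Properties (vi)--(viii) are consequences of the construction: since $\phi$ is $C^4$ on $\R\setminus\{0\}$ with bounded derivatives and $\psi, \alpha, \nor, \pr_\Theta, d(\cdot,\Theta)$ are $C^4$ on $\R^d\setminus\Theta$ with partial derivatives of order $\le 4$ that extend boundedly to $\Theta$, the chain rule makes $G|_{\R^d\setminus\Theta}$ a $C^4$ map with partial derivatives bounded up to order $4$, giving (vi) and (vii). For (viii), $\mu_G$ and $\sigma_G$ are $C^1$ on $\R^d\setminus\Theta$ by composition; intrinsic Lipschitz continuity of $\mu_G'$ and $(\sigma_G)_j'$ on $\R^d\setminus\Theta$ follows by combining (A)(vi) (intrinsic Lipschitz continuity of $\mu'$ and $\sigma_j'$), the boundedness of the second derivatives of $G$ and $G^{-1}$ on $\R^d\setminus\Theta$, and the fact that $G$ and $G^{-1}$ are globally Lipschitz so they map intrinsic paths in $\R^d\setminus\Theta$ to paths of comparable length. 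The main technical obstacle is the jump computation in (v): one must identify $\nabla d_s$ and $\nabla\pr_\Theta$ on $\Theta$ using the tangential/normal decomposition of $T_y\Theta^\perp = \R\nor(y)$ and track the remainder terms carefully enough to confirm that, after adding $\tfrac12\tr(R_i\sigma\sigma^\top)$, the compensation of $\mu$'s jump is exact and not merely to leading order.
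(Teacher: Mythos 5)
Your construction has the same shape as the paper's (a perturbation of the identity by a cutoff times $\alpha\circ\pr_\Theta$ times a scalar factor that vanishes to first order on $\Theta$ and whose second derivative jumps across $\Theta$), but there is a factor-of-$2$ error in the choice of $\phi$ that kills the cancellation in (v). With $\phi''(0\pm)=\pm1$ you correctly find that the one-sided limits of $G_i''$ on $\Theta$ differ by $2\,\alpha_i\,\nor\nor^\top$, hence the jump of $\tfrac12\tr(R_i\sigma\sigma^\top)$ is $\alpha_i\,\norm{\sigma^\top\nor}^2$. But the jump of $(G'\mu)_i$, since $G'|_\Theta=I$, is $\mu_{i,+}-\mu_{i,-}$, and by (A)(iii) this equals $-2\,\alpha_i\,\norm{\sigma^\top\nor}^2$. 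The sum of the two jumps is $-\alpha_i\norm{\sigma^\top\nor}^2\neq 0$, so $\mu_G$ remains discontinuous at $\Theta$ and is therefore not Lipschitz. You need the scalar factor to be a smoothed $u\abs{u}$, i.e. $\phi''(0\pm)=\pm 2$, so that the jump of $R_i$ is $4\,\alpha_i\,\nor\nor^\top$; this is exactly what the paper's $\Phi_\eps(x)=\nor(\pr_\Theta(x))^\top(x-\pr_\Theta(x))\,\norm{x-\pr_\Theta(x)}\,\phi(\norm{x-\pr_\Theta(x)}/\eps)$ produces, since on each side it reduces to $\pm d(x,\Theta)^2\phi(d/\eps)$ and hence its signed-distance profile is $u\abs{u}\phi(\abs{u}/\eps)$, not $\tfrac12 u\abs{u}$.

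Two further remarks. First, the term ``half-jump'' in your step (v) is internally inconsistent: if you mean ``half of the jump,'' your numbers imply the jump of $\tfrac12\tr(R_i\sigma\sigma^\top)$ is $2\alpha_i\norm{\sigma^\top\nor}^2$, which contradicts your own computation that the one-sided limits of $G_i''$ differ only by $2\alpha_i\nor\nor^\top$. Whichever reading one takes, the factor-of-$2$ mismatch with $\mu$'s jump persists. Second, the argument for (viii) is too casual: you cannot deduce intrinsic Lipschitz continuity of $\mu_G'$ and $(\sigma_G)_j'$ simply from ``boundedness of the second derivatives of $G$, $G^{-1}$'' and ``$G$ maps intrinsic paths to comparable paths,'' because $\mu$ and $\sigma$ are only linearly growing, not bounded, and products of an intrinsically Lipschitz but unbounded function with another such function need not be intrinsically Lipschitz. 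The paper resolves this (Lemma \ref{diffLip}) by exploiting that $G_i''$ vanishes identically outside $\Theta^\eps$ and that $\mu$, $\sigma$ are bounded on $\Theta^\eps$, via the product rule tailored to this situation (Lemma \ref{productnew0}); your sketch should invoke the same support-and-boundedness mechanism to close the argument.
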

With the help of this Proposition the following can be proven.
\begin{Prop} \label{transcomp}
	Assume $\mu$ and $\sigma$ fulfil the Assumptions (A) and (B). Then there exists a function $G\colon\R^d\to\R^d$ which fulfils (i)-(viii) from Proposition \ref{grep} and such that the functions $\mu_{G}$ and $\sigma_{G}$ from Proposition \ref{grep} fulfil the Assumptions (C) and (D)
\end{Prop}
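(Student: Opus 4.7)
The plan is to use Proposition~\ref{grep} directly: take the same function $G$ and set $\Delta := \Theta$ in conditions (C) and (D). Once this identification is made, essentially every requirement in (C) and (D) becomes either a restatement of, or an immediate consequence of, one of the eight items of Proposition~\ref{grep}, so the proof is mostly a matter of bookkeeping.

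The bulk of the conditions can be dispatched by citation. The global Lipschitz continuity of $\sigma_G$ and $\mu_G$ required by (D) is exactly Proposition~\ref{grep}(iv) and (v). The $C^1$-regularity of $\mu_G$ and $\sigma_G$ on $\R^d \setminus \Theta$ needed for (C)(ii) (with $\Delta = \Theta$), and the intrinsic Lipschitz continuity of the derivatives needed for (C)(iv), are both given by Proposition~\ref{grep}(viii). For (C)(i) I would keep the hypersurface $\Theta$ from (A); it is $C^5$ and of positive reach, hence in particular $C^3$ and of positive reach, and with the normal vector $\nor$ from (A)(i), Proposition~\ref{grep}(iv) gives $\sigma_G(x) = \sigma(x)$ on $\Theta$, so the condition $\inf_{x\in \Theta}\|\nor(x)^\top \sigma_G(x)\|>0$ reduces to $\inf_{x\in \Theta}\|\nor(x)^\top \sigma(x)\|>0$, which is (A)(ii); taking $\Delta = \Theta$ handles both cases at once.

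The only step that needs a brief computation is (C)(iii). By (A)(v) there exists $\eps_0 \in (0, \reach(\Theta))$ such that $\sigma$ is bounded on $\Theta^{\eps_0}$. For any fixed $\eps \in (0, \eps_0)$ and any $y \in \Theta^\eps$ the orthogonal projection $\pr_\Theta(y) \in \Theta$ is well defined with $\|y - \pr_\Theta(y)\| < \eps$, so combining $\sigma_G(\pr_\Theta(y)) = \sigma(\pr_\Theta(y))$ from Proposition~\ref{grep}(iv) with a Lipschitz constant $L$ for $\sigma_G$ yields
\[
\|\sigma_G(y)\| \le \|\sigma(\pr_\Theta(y))\| + L\eps \le \|\sigma\|_{\infty, \Theta^{\eps_0}} + L\eps,
\]
which is uniform in $y \in \Theta^\eps$; setting $\Delta = \Theta$ disposes of both halves of (C)(iii) simultaneously.

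The substantive work of the transformation procedure is already absorbed into Proposition~\ref{grep}, so once that statement is in hand Proposition~\ref{transcomp} is essentially a corollary. Within the assembly itself the only subtle point is (C)(iii): Lipschitz continuity of $\sigma_G$ alone does not give a global bound on $\R^d$, and one must exploit boundedness of the original $\sigma$ on $\Theta^{\eps_0}$ together with the well-definedness of $\pr_\Theta$ on a tube of positive reach in order to obtain uniform control of $\sigma_G$ near $\Theta$.
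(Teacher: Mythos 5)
Your proof is correct and follows the same overall strategy as the paper: take $G$ from Proposition~\ref{grep}, set $\Delta = \Theta$, and verify (C) and (D) item by item using $\sigma_G = \sigma$ on $\Theta$ together with Proposition~\ref{grep}(iv),(v),(viii). The only small deviation is in (C)(iii): the paper writes $\sigma_G = (G'\sigma)\circ G^{-1}$ and invokes $G^{-1}(\Theta^{\eps}) = \Theta^{\eps}$ plus boundedness of $\sigma$ and $G'$ on $\Theta^{\eps}$, whereas you use the global Lipschitz continuity of $\sigma_G$ combined with its known values on $\Theta$ and the well-definedness of $\pr_\Theta$ on a tube of radius less than $\reach(\Theta)$ --- an equally valid and arguably more self-contained argument.
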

\begin{proof} Choose $G$ from Proposition \ref{grep}.
	Recall that
	\[
	\sigma_{G} =(G'\,\sigma)\circ G^{-1} \colon \R^{d} \to \R^{d\times d}
	\]
	and
	\[
	\mu_G= \Bigl(G'\,\mu + \frac{1}{2} \bigl( \tr\bigl(R_i\,\sigma\sigma^{\top}\bigr))_{1\le i \le d}\Bigr)\circ G^{-1}\colon\R^d\to\R^d.
	\]
	Thus by Proposition \ref{grep}(iv)(v) and (viii) we already know that (C)(ii),(iv) and (D) are fulfilled with $\Theta = \Delta$.	
	Furthermore $\sigma$ and $G'$ are bounded on $\Theta^{\eps}$ and we have $G^{-1}(\Theta^{\eps}) = \Theta^{\eps}$ by A(v) and Proposition \ref{grep}(i),(ii). Thus C(iii) follows. 
	Finally note that by definition of $G$ we have $G_{|\Theta} = id_{|\Theta}$ and $G'_{|\Theta} = Id$. Hence we have
	\[
	\inf_{x \in \Theta} \norm{\nor(x)^{\top}\sigma_{G}(x)} = \inf_{x \in \Theta} \norm{(\nor(x))^{\top} (G' \, \sigma)(G^{-1}(x))} = \inf_{x \in \Theta} \norm{\nor(x)^{\top} \sigma(x)} > 0
	\]
	Thus C(i) is also fulfilled.

\end{proof}
For the proof of Proposition~\ref{grep}  we assume throughout the following that $\mu$ and $\sigma$ satisfy (A) and (B), we fix 
a $C^5$-hypersurface $\Theta\subset \R^d$ of positive reach 
and a normal vector $\nor$ along $\Theta$
according to (A), an open neighbourhood $U$ of $\Theta$ according to (A)(i),(iii) and $\eps^*\in (0,\reach(\Theta))$ such that (A)(v) holds with $\eps = \eps^*$.

First, we provide useful properties of the functions $\alpha$, $\pr_{\Theta}$, $\normal\circ\pr_{\Theta}$ and $\alpha\circ\pr_{\Theta}$. This Lemma is exactly the same as Lemma 3 from \cite{MGYR24}, just with the regularity increased by one. Considering our higher regularity requirements in Assumptions (A) and (B) it is however easily seen that the proof works in the same way here.

\begin{Lem} \label{propconc} The function $\alpha\colon U\to\R^d$ is bounded on $\Theta$. Moreover, there exists  $\tilde \eps \in (0,\reach(\Theta))$ such that the 
	functions $\pr_{\Theta}, \normal\circ \pr_{\Theta}, \alpha\circ \pr_{\Theta}\colon \text{unp}(\Theta)\to \R^d$ are $C^{4}$-functions on $\Theta^{\tilde \eps}\subset\text{unp}(\Theta)$ with 
	\begin{equation}\label{est00}
		\sup_{x\in \Theta^{\tilde \eps}}	\|f^{(\ell)}(x)\|_\ell < \infty
	\end{equation}
	for every $f\in \{\pr_{\Theta}, \normal\circ \pr_{\Theta}, \alpha\circ \pr_{\Theta}\}$ and every $\ell\in\{1,2,3,4\}$.
\end{Lem}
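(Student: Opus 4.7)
My plan is to imitate the argument of Lemma~3 in \cite{MGYR24}, upgrading one level of regularity in every step. First I would establish boundedness of $\alpha$ on $\Theta$ directly from its definition: by (A)(v), $\mu$ is bounded on $\Theta^{\eps^*}$, and by (A)(ii), $\|\sigma(x)^\top\normal(x)\|$ is uniformly bounded away from $0$ for $x\in\Theta$, so the quotient defining $\alpha(x)$ is uniformly bounded (the limit exists by Lemma~\ref{exlim}). This also gives an honest value in $\R^d$ at every $x\in\Theta$, independently of the $C^4$-extension provided by (A)(iii).

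The heart of the argument is a normal-coordinates description of a tubular neighbourhood of $\Theta$. I would consider the map
\[
F\colon \Theta\times\R\to\R^d,\qquad F(y,t)=y+t\,\normal(y),
\]
where I use the $C^4$-extension $\normal\colon U\to\R^d$ from (A)(i) to make sense of the Jacobian of $F$ in the $y$-variable via local $C^5$-parametrizations of $\Theta$. By standard facts about sets of positive reach (cf.~Lemma~\ref{sides0} and the subsequent lemmas in the appendix), $F$ is a bijection from $\Theta\times(-\reach(\Theta),\reach(\Theta))$ onto $\Theta^{\reach(\Theta)}\subset\unp(\Theta)$. A careful application of the implicit/inverse function theorem, together with the fact that the partial derivatives of $\normal$ up to order $4$ are bounded \emph{on $\Theta$}, shows that for some $\tilde\eps\in(0,\reach(\Theta))$ the restriction $F\colon\Theta\times(-\tilde\eps,\tilde\eps)\to\Theta^{\tilde\eps}$ is a $C^4$-diffeomorphism, with Jacobian bounded away from singularity uniformly in $y\in\Theta$ (here I need to shrink $\tilde\eps$ using the uniform bounds on derivatives of $\normal$ on $\Theta$; this is the step where A(i) is used to its full strength). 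Writing $F^{-1}(x)=(\pr_\Theta(x),\pm\, d(x,\Theta))$ then immediately gives $\pr_\Theta\in C^4(\Theta^{\tilde\eps})$, and the inverse function theorem plus induction on $\ell\in\{1,2,3,4\}$ yields uniform bounds on $\|\pr_\Theta^{(\ell)}(x)\|_\ell$ on $\Theta^{\tilde\eps}$.

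With this in hand, the statements for $\normal\circ\pr_\Theta$ and $\alpha\circ\pr_\Theta$ follow by the chain rule/Fa\`a di Bruno formula: both $\normal$ and $\alpha$ are $C^4$ on the neighbourhood $U\supset\Theta$ with partial derivatives up to order $4$ bounded on $\Theta$ (by (A)(i) and (A)(iii)). After a further shrinking of $\tilde\eps$ to guarantee $\Theta^{\tilde\eps}\subset U$ and a sufficiently small tube within which derivatives of $\normal$ and $\alpha$ remain close to their values on $\Theta$, each derivative of $\normal\circ\pr_\Theta$ or $\alpha\circ\pr_\Theta$ of order $\ell\le 4$ is a polynomial in derivatives of $\pr_\Theta$ of orders $\le\ell$ (already bounded) and in derivatives of $\normal$ or $\alpha$ evaluated at $\pr_\Theta(x)\in\Theta$ (bounded by hypothesis). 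This gives the required supremum bounds.

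The main obstacle is the uniformity of all these bounds in the case that $\Theta$ is unbounded: one cannot simply pass to a compact subset. What makes the argument go through is precisely the strengthened hypothesis that partial derivatives of $\normal$ and of $\alpha$ of order up to $4$ are \emph{globally} bounded on $\Theta$. Granted that, the entire argument is the one of \cite[Lemma~3]{MGYR24} performed with one additional derivative, and I would simply indicate the modifications rather than rewriting the calculation in full.
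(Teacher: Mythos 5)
Your proposal is correct and takes essentially the approach the paper relies on: the paper defers to \cite[Lemma~3]{MGYR24} with the regularity bumped by one, and your reconstruction via the tubular neighbourhood map $F(y,t)=y+t\,\normal(y)$, the inverse function theorem with uniformity supplied by the global bounds on derivatives of $\normal$ and $\alpha$ on $\Theta$, and the chain rule for $\normal\circ\pr_\Theta$, $\alpha\circ\pr_\Theta$ is exactly the standard argument that lemma runs. One small redundancy: the "further shrinking of $\tilde\eps$ so that derivatives of $\normal,\alpha$ remain close to their values on $\Theta$" is unnecessary, since by Fa\`a di Bruno all derivatives of $\normal$ and $\alpha$ in those compositions are evaluated at $\pr_\Theta(x)\in\Theta$ (as you yourself point out in the very next sentence), so the hypothesised bounds on $\Theta$ already suffice after ensuring $\Theta^{\tilde\eps}\subset U$.
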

We turn to the construction of the transformation $G$. Choose $\tilde \eps$ according to Lemma~\ref{propconc} and put
\[
\gamma = \min(\eps^*,\tilde \eps).
\]
For all $ \eps \in(0,\gamma)$ we define 
\[ 
G_{\eps} = (G_{\eps,1},\dots, G_{\eps,d})^\top \colon \R^{d} \to \R^{d}, \,\,x \mapsto 
\begin{cases}	x + \Phi_{\eps}(x)\alpha(\pr_{\Theta}(x)), &\text{if }x\in \Theta^\eps,\\ x , &\text{if }x\in \R^d\setminus \Theta^\eps,\end{cases}
\] 
where
\[ 
\Phi_{\eps} \colon \Theta^{\gamma} \to \R, \,\,x \mapsto \normal(\pr_{\Theta}(x))^{\top}(x-\pr_{\Theta}(x))\|x-\pr_{\Theta}(x)\|\phi\left(\frac{\|x-\pr_{\Theta}(x)\|}{\eps}\right)
\]
and
\[ 
\phi \colon \R \to \R, \,\,x \mapsto 
\begin{cases}
	(1-x^2)^{5}, &\text{ if } |x|\leq 1, \\
	0, &\text{ otherwise}.
\end{cases} 
\]

In \cite{MGYR24} it was shown, that there exists $\widetilde \gamma\in (0,\gamma)$ such that for all $\eps\in (0,\widetilde\gamma)$, the function $G=G_\eps$ satisfies the conditions (i) to (v) in Proposition~\ref{grep},
but with the fifth power in the definition of $\phi$ replaced with the fourth. As we will see, this slight change in the definition of the transform will cause almost no changes to the way (i)-(v) are proven. It however allows us to
deduce the additional regularity requirements (vi)  to (viii).
For this purpose, we first study the functions $\phi$ and $\Phi_\eps$. The proofs of the following two Lemmas are very similar to the proofs of Lemmas 4 and 5 in \cite{MGYR24}.
\begin{Lem} \label{diff2}
	The function $\varphi$	is a $ C^{4}$-function. 	For every $ \eps \in(0,\gamma)$, the function 	
	$\Phi_{\eps}$ has the following properties. 
	\begin{itemize}
		\item[(i)] For every $s\in\{+,-\}$ and $x\in  Q_{\eps,s} $ we have
		\[ 
		\Phi_{\eps}(x)= s \norm{x-\pr_{\Theta}(x)}^{2} \phi\Bigl(\frac{\|x-\pr_{\Theta}(x)\|}{\eps}\Bigr).
		\]
		\item[(ii)] $\sup_{x \in \Theta^{\gamma}}|\Phi_{\eps}(x)| \le \eps^{2}$. 
		\item[(iii)] $ \Phi_{\eps}$ is a $C^{1}$-function with $\Phi_\eps'(x)=0$ for every $x\in\Theta$ and  there exists $K \in (0,\infty)$, which does not depend on $\eps$, such that 
		\[ 
		\sup_{x \in \Theta^{\gamma}} \|\Phi'_{\eps}(x)\| \leq K \eps. 
		\]
		\item[(iv)]  $\Phi_{\eps}$ is a $C^4$-function on the open set $\Theta^{\gamma}\setminus \Theta $ and  $\Phi_\eps$ as well as all partial derivatives of $\Phi_\eps$ up to order four are vanishing on $\Theta^{\gamma}\setminus \Theta^\eps$. Moreover,
		\[
		\max_{i \in \{1,\dots,4 \}} \sup_{x \in \Theta^{\gamma} \setminus \Theta} \|\Phi_{\eps}^{(i)}(x)\|_i < \infty. 
		\]			
		
	\end{itemize} 
\end{Lem}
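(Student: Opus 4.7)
My plan rests on two building blocks. First, the cut-off $\phi(t)=(1-t^{2})^{5}=(1-t)^{5}(1+t)^{5}$ has a zero of multiplicity exactly $5$ at $t=\pm 1$, so $\phi^{(k)}(\pm 1)=0$ for $k\le 4$; matching this with $\phi\equiv 0$ off $[-1,1]$ yields $\phi\in C^{4}(\R)$. Second, by Lemma~\ref{propconc} the maps $\pr_{\Theta}$ and $\normal\circ\pr_{\Theta}$ are $C^{4}$ on $\Theta^{\tilde\eps}$ with uniformly bounded derivatives of every order $\le 4$. Consequently $A(x):=\normal(\pr_{\Theta}(x))^{\top}(x-\pr_{\Theta}(x))$ is $C^{4}$ on $\Theta^{\tilde\eps}$ with bounded derivatives up to order $4$, and $d(x):=\|x-\pr_{\Theta}(x)\|$ satisfies $d^{2}\in C^{4}(\Theta^{\gamma})$ while $d\in C^{4}(\Theta^{\gamma}\setminus\Theta)$, where $d>0$. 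Assertion (i) then drops out by writing $x=y+s\lambda\normal(y)\in Q_{\eps,s}$, so that $\pr_{\Theta}(x)=y$, $d(x)=\lambda$ and $A(x)=s\lambda$; substitution into the definition of $\Phi_{\eps}$ gives the stated signed formula. Assertion (ii) follows from (i) and \eqref{u01}: if $d(x)>\eps$ then $\phi(d(x)/\eps)=0$, while if $d(x)\le\eps$ then $|\Phi_{\eps}(x)|\le d(x)^{2}|\phi(d(x)/\eps)|\le\eps^{2}$, using $|\phi|\le 1$ on $[-1,1]$.

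For (iv), on the open set $\Theta^{\gamma}\setminus\Theta$ the three factors $A$, $d$ and $\phi\circ(d/\eps)$ are each $C^{4}$ with bounded derivatives up to order $4$. Hence the product rule gives $\Phi_{\eps}\in C^{4}(\Theta^{\gamma}\setminus\Theta)$ with partial derivatives up to order $4$ bounded uniformly on $\Theta^{\gamma}\setminus\Theta$ (with bound depending on $\eps$). On $\Theta^{\gamma}\setminus\Theta^{\eps}$ we have $d(x)\ge\eps$, so $\phi(d(x)/\eps)=0$ and all its derivatives vanish there because $\phi^{(k)}(1)=0$ for $k\le 4$; since this factor (or a derivative of it, which is also zero) appears in every summand of any partial derivative of $\Phi_{\eps}$, this forces $\Phi_{\eps}$ and all its partial derivatives up to order $4$ to vanish on the open set $\Theta^{\gamma}\setminus\Theta^{\eps}$.

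The main obstacle is (iii), which requires $C^{1}$-regularity across $\Theta$. Differentiability at any $x_{0}\in\Theta$ with vanishing derivative follows from the quadratic bound $|\Phi_{\eps}(x)|\le d(x)^{2}\le\|x-x_{0}\|^{2}$, a direct consequence of (i)--(ii) together with $\pr_{\Theta}(x_{0})=x_{0}$; this yields $\Phi_{\eps}(x)-\Phi_{\eps}(x_{0})=o(\|x-x_{0}\|)$ and hence $\Phi_{\eps}'(x_{0})=0$. For continuity of $\Phi_{\eps}'$ across $\Theta$ and the bound $\sup_{x\in\Theta^{\gamma}}\|\Phi_{\eps}'(x)\|\le K\eps$, I would apply the product rule to $\Phi_{\eps}=A\cdot d\cdot\phi(d/\eps)$ on $\Theta^{\gamma}\setminus\Theta$: the summand $A'\cdot d\cdot\phi(d/\eps)$ is bounded by $Cd(x)\le C\eps$, the summand $A\cdot d'\cdot\phi(d/\eps)$ by $|A(x)|\cdot C=d(x)\cdot C\le C\eps$, and the summand $A\cdot d\cdot\phi'(d/\eps)\,d'/\eps$ by $d(x)^{2}\cdot C/\eps\le C\eps$, each summand vanishing as $d(x)\to 0$ and bounded by a universal multiple of $\eps$ by Lemma~\ref{propconc}. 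This simultaneously yields the sup bound $K\eps$ and continuity of $\Phi_{\eps}'$ across $\Theta$ with value $0$, completing the proof of (iii).
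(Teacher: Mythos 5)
Your proof follows the paper's architecture for (i) and (ii), and for the $C^4$-regularity of $\phi$, but takes a genuinely different route for (iii) and (iv), which I'll address.

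For (iii), your argument is correct and is a bit more hands-on than the paper's: you establish differentiability at each $x_0\in\Theta$ directly from the quadratic bound $|\Phi_\eps(x)|\le d(x)^2\le\|x-x_0\|^2$, and then verify continuity of $\Phi_\eps'$ across $\Theta$ together with the bound $K\eps$ by a term-by-term estimate of the product rule. The paper instead obtains the $C^1$-regularity across $\Theta$ by observing that $\Phi_\eps$ has the factored form (normal scalar component)$\times$($y\|y\|$-type term), where $y\mapsto y\|y\|$ is $C^1$ at the origin; the bound $K\eps$ is then read off from the one-variable representation $\Phi_\eps=s\,f_\eps(d^2)$ on $Q_{\eps,s}$ with $f_\eps(t)=t(1-t/\eps^2)^5$, using the explicit polynomial bound $|f_\eps'|\le112$ on $(-\eps^2,\eps^2)$ together with $\Phi_\eps'(x)=s\,f_\eps'(d(x)^2)\,2(x-\pr_\Theta(x))^\top$ (the $\pr_\Theta'$ contribution drops out by Lemma~\ref{projdist}(iii)). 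Your approach avoids the one-variable reparametrisation but needs slightly more bookkeeping; both are valid.

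For (iv), however, there is a real gap. You assert that on $\Theta^\gamma\setminus\Theta$ the factor $d=\|\cdot-\pr_\Theta(\cdot)\|$ is $C^4$ \emph{with bounded derivatives up to order $4$}, and you then rely on this to conclude boundedness of $\Phi_\eps^{(i)}$ for $i\le 4$ via the product rule. The $C^4$-regularity is clear, but the boundedness is not: the naive chain rule gives $d'(x)=(x-\pr_\Theta(x))^\top(I-\pr_\Theta'(x))/d(x)$ and iterated differentiation produces, termwise, negative powers of $d(x)$, so boundedness of $d^{(k)}$ near $\Theta$ for $k\ge 2$ is precisely the kind of cancellation that must be argued, not asserted. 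It does hold here, but only because of the identity $d=sA$ on $Q_{\eps,s}$, which is exactly your computation in (i) and transfers the $C^4$-bounds for $A=(\normal\circ\pr_\Theta)^\top(\cdot-\pr_\Theta(\cdot))$ (from Lemma~\ref{propconc}) to $d$ on each one-sided tube. If you invoke that identity explicitly your product-rule argument goes through; as written, the step is unjustified. The paper sidesteps the whole issue by never differentiating $d$ directly: it uses the representation $\Phi_\eps'=s\,f_\eps'(\|\cdot-\pr_\Theta(\cdot)\|^2)\,2(\cdot-\pr_\Theta(\cdot))^\top$ and iterates derivatives on the factors $f_\eps'\circ\|\cdot-\pr_\Theta(\cdot)\|^2$ and $(\cdot-\pr_\Theta(\cdot))^\top$, each of which is manifestly a composition/product of maps with uniformly bounded derivatives on $\Theta^\gamma$ (since $\|\cdot\|^2$, as opposed to $\|\cdot\|$, is globally smooth). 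I would recommend either adding the one-line observation that $d_{|Q_{\eps,s}}=sA_{|Q_{\eps,s}}$ inherits the bounds from Lemma~\ref{propconc}, or switching to the paper's factorisation through $d^2$.
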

\begin{proof} The proof of the statement on the function $\varphi$ is straightforward.
	We turn to the proof of the properties (i) to (iv) of the function $\Phi_\eps$. 
	
	Let $y\in \Theta$, $\lambda\in (0,\eps$), $s\in\{+,-\}$ and put $x= y+ s\lambda \normal(y)$. Since $\normal(y)$ is orthogonal to the tangent space of $\Theta$ at $y$, we have $ \pr_{\Theta}(x)= y $ by \app{Lemma~\ref{fed0}}. Hence $\|x-\pr_{\Theta}(x)\| = \|s\lambda \normal(y)\| = \lambda$ and we conclude that
	\[
	\normal(\pr_{\Theta}(x))^{\top}(x-\pr_{\Theta}(x)) = \normal(y)^\top s\lambda \normal(y) = s\lambda = s\|x-\pr_{\Theta}(x)\|,
	\]
	which finishes the proof of property (i).
	
	For $x\in \Theta^\gamma \setminus \Theta^\eps$ we have $\|x-\pr_{\Theta}(x)\| \ge \eps$. Hence $\phi(\|x-\pr_{\Theta}(x)\| /\eps) =0$, which implies $\Phi_\eps(x) = 0$. Next, let $x\in  \Theta^\eps$. Then $\|x-\pr_{\Theta}(x)\| < \eps$ and therefore
	\[
	|\Phi_{\eps}(x)| \le \|x -\pr_{\Theta}(x)\|^{2}\|\normal(\pr_{\Theta}(x))\|\Bigl(1-\frac{\|x-\pr_{\Theta}(x)\|^2}{\eps^2}\Bigr)^{5}  < \eps^{2}, 
	\]
	which finishes the proof of property (ii).
	
	We turn to the proof of the properties (iii) and (iv).
	By  Lemma~\ref{propconc}, the functions $ \normal \circ \pr_{\Theta}$ and $\pr_{\Theta}$ are $C^{4}$-functions on $\Theta^{\gamma}$. 
	Since $\|\cdot\|$ is a $C^{\infty}$-function on  $\R^{d} \setminus \{0\}$ 
	we obtain that $\|\cdot - \pr_{\Theta}(\cdot)\| $ is a $ C^{4}$-function on $\Theta^{\gamma}\setminus \Theta$. Using the fact that $\varphi$ is a $C^4$-function on $\R$ we conclude that $\phi \circ \|\cdot - \pr_{\Theta}(\cdot)\|/\eps $ is a $ C^{4}$-function on  $\Theta^{\gamma}\setminus \Theta$ as well. Thus $\Phi_\eps$ is a $ C^{4}$-function on $\Theta^{\gamma}\setminus \Theta$.
	Furthermore, for $x \in \Theta^{\eps}$ we have $\phi(\|x-\pr_{\Theta}(x)\|/\eps)=(1-\|x - \pr_{\Theta}(x)\|^2/\eps^2)^{5} $. Since $\|\cdot\|^2$ is a $C^{\infty}$-function on  $\R^{d}$, we conclude that $ \phi \circ  \|\cdot - \pr_{\Theta}(\cdot)\|/\eps $ is a $ C^{4}$-function on $\Theta^{\eps}$. Since $f\colon \R^{d} \to \R^{d}, \, x \mapsto x \, \norm{x}$ is a $C^{1}$-function, we obtain that $\Phi_\eps $ is a $ C^{1}$-function on   $\Theta^{\eps}$. Since $\Theta^{\eps} \cup( \Theta^{\gamma}\setminus \Theta)$ = $\Theta^\gamma$ we conclude that  $\Phi_\eps$ is a $C^1$-function on $\Theta^\gamma$.
	
	Clearly, $\phi(\|x-\pr_{\Theta}(x)\|/\eps) = 0$ for all $x\in \Theta^\gamma \setminus \Theta^\eps$, which implies in particular that $\Phi_\eps$ vanishes on the open set $\{x\in\R^d\colon \eps < d(x,\Theta) <\gamma\} \subset \Theta^\gamma \setminus \Theta^\eps\subset \Theta^\gamma\setminus \Theta$. As a consequence, all partial derivatives of $\Phi_\eps$ up to order four vanish on  $\{x\in\R^d\colon \eps < d(x,\Theta) <\gamma\}$ as well. Since $\Phi_\eps$ is a $C^4$-function on $\Theta^{\gamma}\setminus \Theta$ we conclude that $\Phi_\eps$ and all partial derivatives of $\Phi_\eps$ up to order four also vanish on    $\Theta^{\gamma}\setminus \Theta^\eps=\{x\in\R^d\colon \eps\le d(x,\Theta) <\gamma\} $.

	It remains to prove the estimates  in (iii) and (iv) and the fact that $\Phi_\eps'$ vanishes on $\Theta$. Let $s\in\{+,-\}$. By property (i) we have
	\begin{equation} \label{Phirep}
	\Phi_{\eps}(x)= s f_{\eps}(\norm{x-\pr_{\Theta}(x)}^{2}), \quad x\in Q_{\eps,s},
	\end{equation}
	with 	$f_{\eps} \colon \R \to \R,\,  x \mapsto x(1-x/\eps^{2})^{5} $ . Clearly, $ f_{\eps}$ is a $C^{\infty}$-function and 
	by straightforward calculation we obtain that for all $x \in \R$,
	\[ 
	f_{\eps}'(x)= 1- \frac{10}{\eps^{2}} x + \frac{30}{\eps^{4}}x^{2} - \frac{40}{\eps^{6}} x^{3} + \frac{25}{\eps^{8}}x^{4} -\frac{6}{\eps^{10}}x^{5}. 
	\]
	For $x \in(-\eps^{2},\eps^{2})$ we thus have $ \abs{f_{\eps}'(x)} \leq 1+10+30+40+25+6 = 112 $.	
	Since $Q_{\eps,s}\subset \Theta^\eps$ we obtain by the chain rule and \app{Lemma~\ref{projdist}(iii)} that for every $ x \in Q_{\eps,s}$,
	\begin{equation}\label{phiref} 
		\begin{aligned}
			\Phi_{\eps}'(x)  &= s f_{\eps}'(\|x-\pr_{\Theta}(x)\|^{2})2(x-\pr_{\Theta}(x))^{\top}(I_{d} - \pr_{\Theta}'(x))\\
			& = s f_{\eps}'(\|x-\pr_{\Theta}(x)\|^{2})2(x-\pr_{\Theta}(x))^{\top},
		\end{aligned}
	\end{equation} 
	and therefore
	\[
	\|	\Phi_{\eps}'(x)\| \le 112\eps 
	\]
	for all  $ x \in Q_{\eps,s}$. Hence by \eqref{u01},
	\begin{equation}\label{zzt1}
		\sup_{x \in \Theta^\eps\setminus\Theta}\|\Phi_{\eps}'(x)\| \le 112\, \eps. 
	\end{equation}
	
	Let $ x \in \Theta$. Clearly, $\lim_{n\to\infty} x+n^{-1}\normal(x) = x$ and $x+n^{-1}\normal(x)\in Q_{\eps,+} $ for $n> 1/\eps$. Moreover, $\pr_{\Theta}(x+n^{-1}\normal(x)) = x$ for every $n\in\N$. Since $ \Phi_\eps'$, $\pr_{\Theta}$ and $\pr_{\Theta}'$ are  continuous we thus obtain by~\eqref{phiref} that
	\begin{align*}
		\Phi_{\eps}'(x)  = \lim_{n \to \infty}\Phi_{\eps}'(x+n^{-1}\normal(x)) = \lim_{n \to \infty} f_{\eps}'(\|n^{-1}\normal(x)\|^{2}) 2n^{-1}\normal(x)^{\top}  = \lim_{n \to \infty} 2n^{-1} f_\eps'(n^{-2}) \normal(x)^{\top} = 0,
	\end{align*}
	which jointly with~\eqref{zzt1} and the fact  that $\Phi_{\eps}'$ vanishes on $ \Theta^{\gamma} \setminus \Theta^{\eps}$ completes the proof of property (iii).
	Observing \eqref{u00} and \eqref{u01} it thus remains to show that for $s\in\{+,-\}$,
	\begin{equation}\label{u0}
		\max_{i \in \{1,\dots,4 \}}\sup_{x \in Q_{\eps,s}} \|\Phi_{\eps}^{(i)}(x)\|_i < \infty. 
	\end{equation}
	Fix $s\in\{-,+\}$. By \eqref{phiref} we have
	\[
	\Phi_{\eps}'(x)  = s f_{\eps}'(\|x-\pr_{\Theta}(x)\|^{2})2(x-\pr_{\Theta}(x))^{\top}, \quad x\in Q_{\eps,s},
	\]
	with 	$f_{\eps} \colon \R \to \R,\,  x \mapsto x(1-x/\eps^{2})^{5} $.
	By Lemma~\ref{propconc} we have
	\begin{equation}\label{u2}
		\max_{i \in \{1,2,3\}}	\sup_{x \in \Theta^{\gamma}}  \|\pr_{\Theta}^{(i)}(x)\|_i < \infty, 
	\end{equation}
	which implies
	\begin{equation}\label{u3}
		\max_{i\in \{0,\dots,3\}}	\sup_{x \in\Theta^{\gamma}} \|(I_d - \pr_{\Theta}')^{(i)}(x)\|_i < \infty
	\end{equation}
	as well as
	\begin{equation}\label{u4}
		\max_{i \in \{0,\dots,3\}} 	\sup_{x \in  \Theta^{\gamma}}\|(\cdot-\pr_{\Theta}(\cdot))^{(i)}(x)\|_i < \infty. 
	\end{equation}
	Clearly, 
	\[ 
	\max_{i \in \{1,2,3\}}	\sup_{x \in B_{\gamma}(0)}  \bigl\|(\|\cdot\|^{2})^{(i)}(x)\bigr\|_i < \infty, 
	\] 
	which jointly with~\eqref{u4} implies
	\begin{equation}\label{u5}
		\max_{i \in \{1,2,3\}} 	\sup_{x \in  \Theta^{\gamma}}\bigl \|\bigl(\|\cdot-\pr_{\Theta}(\cdot)\|^2\bigr)^{(i)}(x)\bigr\| _i< \infty.
	\end{equation}
	Obviously we have
	\[ 
	\max_{i \in \{1,2,3,4\}} 	\sup_{x \in [0,\gamma^2]}  |f_{\eps}^{(i)}(x)| < \infty, 
	\]
	which jointly with~\eqref{u5} yields
	\begin{equation}\label{u6}
		\max_{i \in \{0,1,2,3\}} 	\sup_{x \in \Theta^{\gamma}} \|(f_{\eps}' \circ \|\cdot-\pr_{\Theta}(\cdot)\|^{2})^{(i)}(x)\|_i < \infty. 
	\end{equation}
	Employing \eqref{phiref} as well as~\eqref{u3}, \eqref{u4} and~\eqref{u6} yields~\eqref{u0} and hereby completes the proof of the lemma.
\end{proof}
Now we turn to the analysis of the transform $G$. 
\begin{Lem}\label{Gdiff}
	For all $\eps \in(0,\gamma)$ the function  $ G_{\eps}$ has the following properties.
	\begin{itemize}
		\item [(i)] $ G_{\eps}$ is a $C^{1}$-function with bounded derivative $G_\eps'$ that satisfies $G_\eps'(x)= I_d$ for every $x\in\Theta$ and every $x\in\R^d\setminus \Theta^\eps$.
		\item[(ii)] $G_{\eps}$ is a $C^{4}$-function on  $\R^{d} \setminus \Theta $ with 
		\[
		\sum_{i = 2}^{4} \sup_{x \in \R^{d}\setminus \Theta}  \|G_{\eps}^{(i)}(x)\|_i < \infty.
		\]
	\end{itemize}
\end{Lem}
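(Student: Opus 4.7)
The plan is to glue together two simple regional formulas for $G_\eps$ using the strong flatness of $\Phi_\eps$ across $\{x:d(x,\Theta)=\eps\}$ provided by Lemma~\ref{diff2}(iv). Fix $\eps\in(0,\gamma)$ and set $V=\Theta^\gamma$ and $W=\R^d\setminus\cl(\Theta^\eps)$. Since $\eps<\gamma$ we have $V\cup W=\R^d$, and on the overlap $V\cap W=\Theta^\gamma\setminus\cl(\Theta^\eps)$ Lemma~\ref{diff2}(iv) guarantees that $\Phi_\eps$ and all its partial derivatives up to order four vanish. Consequently the single formula $G_\eps(x)=x+\Phi_\eps(x)\,\alpha(\pr_\Theta(x))$ extends from $\Theta^\eps$ to all of $V$, agrees with $G_\eps(x)=x$ on the overlap, and matches the identity up to fourth-order partial derivatives there; this reduces the whole proof to analysing the product $\Phi_\eps\cdot(\alpha\circ\pr_\Theta)$ on $V$ (for (i)) and on $V\setminus\Theta$ (for (ii)), and then taking maxima with the trivial bounds on $W$.

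For (i) I would combine the product rule on $V$ with the fact that $\alpha\circ\pr_\Theta$ is $C^4$ on $\Theta^{\tilde\eps}\supseteq V$ with bounded first derivative (Lemma~\ref{propconc}) and that $\Phi_\eps$ is $C^1$ on $V$ with $\Phi_\eps$ and $\Phi_\eps'$ bounded (Lemma~\ref{diff2}(ii),(iii)), to obtain $G_\eps\in C^1(V)$ with bounded derivative. Together with $G_\eps(x)=x$ on $W$ and the gluing above, this yields $G_\eps\in C^1(\R^d)$ with bounded $G_\eps'$ and $G_\eps'(x)=I_d$ on $\R^d\setminus\Theta^\eps$. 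For $x\in\Theta$ the product rule together with $\Phi_\eps(x)=0$ (from $\pr_\Theta(x)=x$ in the defining formula of $\Phi_\eps$) and $\Phi_\eps'(x)=0$ (Lemma~\ref{diff2}(iii)) gives $G_\eps'(x)=I_d$, finishing (i).

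For (ii) I would apply the Leibniz rule to $\Phi_\eps\cdot(\alpha\circ\pr_\Theta)$ on the open set $V\setminus\Theta$, using Lemma~\ref{diff2}(iv) to bound all partial derivatives of $\Phi_\eps$ up to order four uniformly there, and Lemma~\ref{propconc} to bound the partial derivatives of $\alpha\circ\pr_\Theta$ on $V$ up to the same order. The resulting estimate, combined with the fact that all partial derivatives of order $\ge 2$ of $G_\eps$ vanish on $W$, yields the required supremum bound over $\R^d\setminus\Theta=(V\setminus\Theta)\cup W$. The only delicate point in the whole argument is the consistency of the two branches of $G_\eps$ together with all their partial derivatives across the interface $\{x:d(x,\Theta)=\eps\}$, and this is precisely the content of Lemma~\ref{diff2}(iv); everything else reduces to routine applications of the product, Leibniz and chain rules using bounds that have already been established.
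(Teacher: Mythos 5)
Your proposal is correct and follows essentially the same route as the paper: cover $\R^d$ by the two open sets $\Theta^\gamma$ and $\R^d\setminus\cl(\Theta^\eps)$, note the formulas agree on the overlap because $\Phi_\eps$ and its first four partial derivatives vanish there (Lemma~\ref{diff2}(iv)), then transfer the $C^1$/$C^4$-regularity and derivative bounds from Lemmas~\ref{propconc} and~\ref{diff2} via the product rule, and finish (i) by evaluating $G_\eps'$ at $x\in\Theta$ using $\Phi_\eps(x)=\Phi_\eps'(x)=0$.
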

\begin{proof}
	Let $\eps \in(0,\gamma)$. 
	By Lemma~\ref{propconc} we know that $\alpha \circ \pr_{\Theta}$ is a  $C^{4}$-function on $\Theta^\gamma$. Using Lemma~\ref{diff2}(iii) and (iv) we conclude that $G_\eps$ is a $C^1$-function on  $\Theta^\gamma$ and a $C^4$-function on  $\Theta^\gamma\setminus\Theta$, respectively. Since $G_\eps(x) = x$ for all $x\in \R^d\setminus  \Theta^\eps$, we obtain that $G_\eps$ is a $C^\infty$-function on the open set $ \R^d\setminus \cl(\Theta^\eps)$. Note that $ \cl( \Theta^\eps) =\{x\in\R^d\colon d(x,\Theta) \le \eps\} \subset \Theta^\gamma$. Hence $G_\eps$ is a $C^1$-function on $\R^d =\Theta^\gamma \cup (\R^d\setminus  \cl(\Theta^\eps))$ and a $C^4$-function on  $\R^d\setminus  \Theta = (\Theta^\gamma\setminus\Theta) \cup (\R^d\setminus \cl( \Theta^\eps))$, respectively.
	
	By Lemma~\ref{propconc} we furthermore know that
	\[ 
	\max_{\ell\in \{0,\dots,4\}} \sup_{x \in \Theta^{\gamma}} \|(\alpha\circ \pr_{\Theta})^{(\ell)}(x)\|_\ell< \infty. 
	\]
	Combining the latter fact with Lemma~\ref{diff2} (ii),(iii) and (iv) we obtain by the product rule for derivatives,
	\[
		\max_{\ell \in \{1,\dots,4 \}} \sup_{x \in \Theta^{\gamma}\setminus \Theta} \|G_{\eps}^{(\ell)}(x)\|_\ell < \infty. 
	\]
	Since $ G_{\eps}(x)= x $  for all $ x \in \R^{d}\setminus \Theta^{\eps}$ we furthermore have
	\[
	\max_{\ell\in\{1,\dots,4\}}  \sup_{x \in \R^{d}\setminus \cl(\Theta^{\eps})} \|G_{\eps}^{(\ell)}(x)\|_\ell < \infty. 
	\]
	 It remains to prove that $G_\eps'(x)=I_d$ for every $x\in\Theta$ and every $x\in\R^d\setminus \Theta^\eps$. 
	Since $G_\eps(x) = x$ for every $x\in \R^d\setminus \Theta^\eps$ and $G'_\eps$ is continuous we have
	\begin{equation} \label{Gdiffref} 
		G_{\eps}'(x)= \begin{cases}
			I_{d} +(\alpha\circ \pr_{\Theta})(x) \Phi_{\eps}'(x) +  \Phi_{\eps}(x)(\alpha \circ \pr_{\Theta})'(x), & \text{ if } x \in \Theta^{\eps},\\
			I_{d}, & \text{ if } x \in \R^{d} \setminus \Theta^{\eps}
		\end{cases} 
	\end{equation}
	by the product rule for derivatives. Let $x\in\Theta$. Then $\Phi_\eps(x)=0$ by the definition of $\Phi_\eps$ and we have $\Phi'_\eps(x) =0$   by Lemma~\ref{diff2}(iii). Thus $G_\eps'(x)=I_d$,  which finishes the proof of the lemma. 
\end{proof}
The proofs of the following Lemmas transfer word by word from \cite{MGYR24}.
\begin{Lem} \label{diffeo}
	There exists $ \delta \in(0,\gamma)$ such that for all $ \eps \in(0,\delta)$ the function $ G_{\eps}$ 
	is a $C^{1}$-diffeomorphism with $\sup_{x \in \R^{d}} \|(G_{\eps}^{-1})'(x)\| < \infty $.	
\end{Lem}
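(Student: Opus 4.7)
The plan is to show that for sufficiently small $\eps$, the map $G_\eps$ is a small Lipschitz perturbation of the identity and then exploit this to get both bijectivity and a uniform bound on the inverse derivative.

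First, using \eqref{Gdiffref} I would write $G_\eps(x)=x+H_\eps(x)$ where
\[
H_\eps(x)=\begin{cases}\Phi_\eps(x)\,\alpha(\pr_\Theta(x)),&x\in\Theta^\eps,\\ 0,&x\in\R^d\setminus\Theta^\eps,\end{cases}
\]
and control $\|H_\eps'\|$ on $\Theta^\gamma$ by the product rule. By Lemma~\ref{propconc} the functions $\alpha\circ\pr_\Theta$ and $(\alpha\circ\pr_\Theta)'$ are bounded on $\Theta^{\tilde\eps}\supset\Theta^\gamma$, while Lemma~\ref{diff2}(ii),(iii) give $|\Phi_\eps|\le\eps^2$ and $\|\Phi_\eps'\|\le K\eps$ uniformly on $\Theta^\gamma$. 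Combining these, I obtain a constant $C\in(0,\infty)$, independent of $\eps$, with
\[
\sup_{x\in\R^d}\|G_\eps'(x)-I_d\|\le C\eps
\]
(on $\R^d\setminus\Theta^\eps$ the derivative equals $I_d$ by Lemma~\ref{Gdiff}(i), so the bound trivially extends).

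Next, I would choose $\delta\in(0,\gamma)$ with $C\delta<1/2$. For any $\eps\in(0,\delta)$ and any $x\in\R^d$, the matrix $G_\eps'(x)=I_d+(G_\eps'(x)-I_d)$ is then invertible by a Neumann series argument, and satisfies $\|G_\eps'(x)^{-1}\|\le(1-C\eps)^{-1}\le 2$ uniformly in $x$. To promote this local invertibility to a global diffeomorphism, I would observe that $H_\eps$ itself is globally Lipschitz with constant at most $C\eps<1/2$: indeed $H_\eps$ is $C^1$ on the open sets $\Theta^\gamma$ and $\R^d\setminus\cl(\Theta^\eps)$ which together cover $\R^d$, vanishes on $\R^d\setminus\Theta^\eps$, and has derivative of norm $\le C\eps$ wherever it exists, so the mean value theorem along segments gives the global bound. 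A standard Banach fixed point argument then shows that for every $y\in\R^d$ the contraction $x\mapsto y-H_\eps(x)$ admits a unique fixed point, proving that $G_\eps$ is a bijection of $\R^d$.

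Finally, combining bijectivity with the pointwise invertibility of $G_\eps'$ gives, via the inverse function theorem, that $G_\eps^{-1}$ is $C^1$ with
\[
(G_\eps^{-1})'(y)=G_\eps'(G_\eps^{-1}(y))^{-1},
\]
and the uniform bound $\|G_\eps'(x)^{-1}\|\le 2$ from the Neumann series immediately yields $\sup_{y\in\R^d}\|(G_\eps^{-1})'(y)\|\le 2$, completing the proof. The only step that requires real care is verifying the global Lipschitz property of $H_\eps$ across the boundary $\partial\Theta^\eps$: one must check that the derivative bound on each of the two overlapping open pieces patches together to give a uniform bound on all of $\R^d$, which is essentially automatic since $H_\eps$ and its derivative vanish on $\R^d\setminus\Theta^\eps$ by Lemma~\ref{diff2}(iv).
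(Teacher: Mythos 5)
Your proposal is correct and follows the same strategy the paper relies on (via Lemma~6 of \cite{MGYR24}, which itself follows the classical Leobacher--Sz\"olgyenyi argument): bound $\|G_\eps'-I_d\|$ by $C\eps$ using Lemma~\ref{propconc} and Lemma~\ref{diff2}(ii),(iii), conclude pointwise invertibility of $G_\eps'$ for small $\eps$ via a Neumann series, establish global bijectivity by observing that $G_\eps=\mathrm{id}+H_\eps$ with $H_\eps$ a contraction and applying the Banach fixed point theorem, and finally invoke the inverse function theorem for the $C^1$-regularity and uniform bound on $(G_\eps^{-1})'$. The only cosmetic remark is that your ``patching across $\partial\Theta^\eps$'' worry is unnecessary: Lemma~\ref{Gdiff}(i) already gives that $G_\eps$ (hence $H_\eps$) is globally $C^1$ with uniformly bounded derivative, so the Lipschitz bound on $H_\eps$ follows directly from the mean value theorem on the convex set $\R^d$; also, the exact numerical constant ``$\le 2$'' depends on the choice of matrix norm, but any uniform bound suffices for the claim $\sup_{x}\|(G_\eps^{-1})'(x)\|<\infty$.
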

\begin{proof}
	See Lemma 6 in \cite{MGYR24}
\end{proof}
\begin{Lem} \label{Lip}
	For every  $\eps \in(0,\delta)$, the diffeomorphism $G_\eps$ has the following properties. 
	\begin{itemize}
		\item[(i)]  The functions $G_{\eps}$ and $G_{\eps}'$ are Lipschitz continuous. 
		\item[(ii)] For every $i\in\{1,\dots,d\}$, the function $G_{\eps,i}''\colon \R^{d} \setminus \Theta \to \R^{d\times d}$ is  intrinsic Lipschitz continuous.
		\item[(iii)] The functions $G_{\eps}^{-1}= (G_{\eps,1}^{-1},\dots, G_{\eps,d}^{-1})^\top$ and $(G_\eps^{-1})'$ are Lipschitz continuous and $(G_{\eps}^{-1})'$ is bounded.
		\item[(iv)] $G_{\eps}^{-1} $ is a $ C^{2}$-function on $\R^{d}\setminus \Theta$ and for every $i\in\{1,\dots,d\}$, the function $(G_{\eps,i}^{-1})''\colon \R^{d} \setminus \Theta \to \R^{d\times d}$ is bounded and intrinsic Lipschitz continuous.
	\end{itemize}
\end{Lem}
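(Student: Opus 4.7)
The plan is to verify the four parts sequentially, exploiting the regularity of $G_\eps$ recorded in Lemmas~\ref{Gdiff} and~\ref{diffeo} together with standard inverse-function-theorem arguments.

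For part (i), Lipschitz continuity of $G_\eps$ follows immediately from the mean value theorem, since Lemma~\ref{Gdiff}(i) supplies that $G_\eps$ is $C^1$ on $\R^d$ with bounded derivative $G_\eps'$. Lipschitz continuity of $G_\eps'$ itself requires more care, because Lemma~\ref{Gdiff}(ii) only controls $G_\eps''$ on the open full-measure set $\R^d\setminus\Theta$. I would combine the continuity of $G_\eps'$ on all of $\R^d$ with the fact that, for $x,y\in\R^d$, the line segment $\overline{x,y}$ meets $\Theta$ in a set of linear Lebesgue measure zero (the degenerate case in which the segment is contained in $\Theta$ is trivial, since then $G_\eps'(x)=G_\eps'(y)=I_d$ by Lemma~\ref{Gdiff}(i)). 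Consequently the map $t\mapsto G_\eps'(x+t(y-x))$ is absolutely continuous on $[0,1]$ with almost-everywhere derivative bounded by $\|G_\eps''\|_{\infty,\R^d\setminus\Theta}\|y-x\|$, and integration yields the desired bound.

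For part (ii), $G_\eps^{(3)}$ is bounded on $\R^d\setminus\Theta$ by Lemma~\ref{Gdiff}(ii), so for $x,y$ in the same connected component of $\R^d\setminus\Theta$ and any continuous path $\gamma\colon[0,1]\to\R^d\setminus\Theta$ from $x$ to $y$, an approximation by $C^1$-paths followed by the fundamental theorem of calculus applied to $G_{\eps,i}''\circ\gamma$ yields $\|G_{\eps,i}''(x)-G_{\eps,i}''(y)\|\le\|G_{\eps,i}^{(3)}\|_\infty\, l(\gamma)$; passing to the infimum delivers the intrinsic Lipschitz bound, while the inequality is vacuous for $x,y$ in different connected components since then $\rho_{\R^d\setminus\Theta}(x,y)=\infty$.

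For part (iii), boundedness of $(G_\eps^{-1})'$ is supplied by Lemma~\ref{diffeo}, which together with the mean value theorem yields Lipschitz continuity of $G_\eps^{-1}$. For the Lipschitz continuity of $(G_\eps^{-1})'$ I would differentiate $G_\eps\circ G_\eps^{-1}=\mathrm{id}$ to obtain $(G_\eps^{-1})'(y)=(G_\eps'(G_\eps^{-1}(y)))^{-1}$ and then apply the matrix identity $A^{-1}-B^{-1}=A^{-1}(B-A)B^{-1}$, combining (i), the Lipschitz continuity of $G_\eps^{-1}$ and the uniform bound on $(G_\eps^{-1})'$. For part (iv), the relation $G_\eps|_\Theta=\mathrm{id}_\Theta$ visible from the definition of $G_\eps$ and Lemma~\ref{diff2} implies $G_\eps^{-1}(\R^d\setminus\Theta)=\R^d\setminus\Theta$, so the inverse function theorem together with Lemma~\ref{Gdiff}(ii) yields that $G_\eps^{-1}$ is $C^4$ on $\R^d\setminus\Theta$; differentiating $(G_\eps^{-1})'=(G_\eps'\circ G_\eps^{-1})^{-1}$ further and using the bounds on $(G_\eps^{-1})'$, $G_\eps''$ and $G_\eps^{(3)}$ from Lemmas~\ref{diffeo} and~\ref{Gdiff}(ii) gives boundedness of $(G_{\eps,i}^{-1})''$ and $(G_{\eps,i}^{-1})^{(3)}$ on $\R^d\setminus\Theta$, whence intrinsic Lipschitz continuity of $(G_{\eps,i}^{-1})''$ follows exactly as in (ii). The main technical obstacle is the argument in (i): one has to pass a local derivative bound on the open set $\R^d\setminus\Theta$ to a global Lipschitz estimate on $\R^d$, and the hypersurface structure of $\Theta$ is precisely what permits this, since a line segment meets $\Theta$ in a negligible set unless it lies inside $\Theta$.
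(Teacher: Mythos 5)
The arguments you give for parts (ii), (iii) and (iv) are sound, and your route through the identity $(G_\eps^{-1})'=(G_\eps'\circ G_\eps^{-1})^{-1}$ together with the resolvent identity $A^{-1}-B^{-1}=A^{-1}(B-A)B^{-1}$ is the natural one. (The paper simply refers to \cite{MGYR24} for this lemma, so I am assessing the proposal on its own merits.)

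There is, however, a genuine gap in your argument for the Lipschitz continuity of $G_\eps'$ in part (i). You invoke the dichotomy that for $x,y\in\R^d$ either the segment $\overline{x,y}$ lies entirely in $\Theta$, or it meets $\Theta$ in a Lebesgue-null set. This dichotomy is false for general $C^5$-hypersurfaces of positive reach: one can construct a $C^5$ function $\phi\colon\R\to\R$ with uniformly bounded derivatives up to order five, $\phi(0)=\phi'(0)=0$, that vanishes on a fat Cantor set containing $0$ without vanishing identically near $0$ (glue together suitably scaled smooth bumps on the complementary intervals). The graph of such a $\phi$, extended appropriately, is an orientable $C^5$-hypersurface of positive reach in $\R^2$, and the $x$-axis meets it in a set of positive linear measure without lying inside it. Moreover, even where the intersection does happen to be null, your inference that $t\mapsto G_\eps'(x+t(y-x))$ is absolutely continuous does not follow from continuity plus boundedness of the derivative on the complementary open set; the Cantor function is precisely a continuous map that is $C^1$ with zero derivative off a null set yet fails to be absolutely continuous. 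What actually saves the argument is a fact you only use in your ``trivial'' case: $G_\eps'\equiv I_d$ on $\Theta$ by Lemma~\ref{Gdiff}(i). Writing $g(t)=G_\eps'(x+t(y-x))$ and $S=\{t\in[0,1]:x+t(y-x)\in\Theta\}$, the set $S$ is closed, $g\equiv I_d$ on $S$, and $g$ is $C^1$ with $\|g'\|\le\|G_\eps''\|_{\infty,\R^d\setminus\Theta}\|y-x\|$ on $[0,1]\setminus S$. For $s<t$, if $[s,t]\cap S=\emptyset$ the bound is immediate; otherwise set $s^*=\min([s,t]\cap S)$ and $t^*=\max([s,t]\cap S)$, apply the mean value theorem separately on $[s,s^*]$ and $[t^*,t]$ (where $g$ is $C^1$ in the open interval and the endpoint values are $I_d$), and use $g(s^*)=g(t^*)$ to conclude $\|g(s)-g(t)\|\le\|G_\eps''\|_{\infty,\R^d\setminus\Theta}\|y-x\|\bigl((s^*-s)+(t-t^*)\bigr)\le\|G_\eps''\|_{\infty,\R^d\setminus\Theta}\|y-x\|(t-s)$. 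In short: it is not the smallness of the intersection with $\Theta$ that makes the proof work, it is the constancy of $G_\eps'$ on $\Theta$, and that needs to be used on the whole segment, not only in the degenerate case.
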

\begin{proof}
	See Lemma 7 in \cite{MGYR24}
\end{proof}
\begin{Lem} \label{Lip1} For every $\eps\in (0,\delta)$ the mapping
	\[
	\nu_{\eps}\colon \R^{d} \setminus \Theta \to \R^{d}, \,\, x \mapsto \Bigl(G_{\eps}'\mu + \frac{1}{2} \bigl(\tr( G_{\eps,i}''\sigma\sigma^{\top})\bigr)_{1\le i \le d}\Bigr)(x)
	\]
	is intrinsic Lipschitz continuous.
\end{Lem}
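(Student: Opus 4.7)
The plan is to show that $\nu_\eps$ is $C^1$ on $\R^d\setminus\Theta$ with a globally bounded derivative there, and then upgrade this to intrinsic Lipschitz continuity by integrating along rectifiable paths in the open set $\R^d\setminus\Theta$. The point that requires care is that $\mu$ is not globally bounded, only on $\Theta^\eps$, so a naive product rule argument for $G_\eps'\mu$ does not immediately give a bounded derivative; the essential mechanism is that the deviation of $G_\eps$ from the identity is supported in $\Theta^\eps$, so every derivative of $G_\eps$ of order at least two is automatically localized to the region where $\mu$ and $\sigma$ are bounded by (A)(v).

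First I would verify that $\nu_\eps\in C^1(\R^d\setminus\Theta,\R^d)$. On this set $G_\eps$ is $C^4$ by Lemma~\ref{Gdiff}(ii), and $\mu$, $\sigma$ are $C^1$ by assumption (A)(iv), so every factor in the defining expression of $\nu_\eps$ is $C^1$ on $\R^d\setminus\Theta$ and $\nu_\eps$ inherits $C^1$-regularity from the product and trace operations.

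Next I would compute $\nu_\eps'$ by the product rule and show it is bounded on $\R^d\setminus\Theta$. The input bounds I would invoke are the following: by Lemma~\ref{Gdiff}, $G_\eps'$ is bounded on $\R^d$ and $G_{\eps,i}^{(k)}$ is bounded on $\R^d\setminus\Theta$ for $k\in\{2,3\}$ and $i\in\{1,\dots,d\}$; by construction $G_\eps(x)=x$ on $\R^d\setminus\Theta^\eps$, so $G_{\eps,i}^{(k)}$ vanishes there for all $k\ge 2$; the coefficients $\mu$ and $\sigma$ are bounded on $\Theta^\eps$ by (A)(v); and $\mu'$, $\sigma'$ are bounded on $\R^d\setminus\Theta$ by Lemma~\ref{propimp}(ii) and by (B), respectively. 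Every term of $\nu_\eps'$ is then either a product of factors that are globally bounded on $\R^d\setminus\Theta$ (e.g.\ $G_\eps'$ times $\mu'$), or a product involving some $G_{\eps,i}^{(k)}$ with $k\ge 2$ times $\mu$, $\sigma$ or $\sigma\sigma^{\top}$; in the latter case the $G_{\eps,i}^{(k)}$ factor forces the whole product to vanish off $\Theta^\eps$, where the remaining factors are bounded. Hence $M:=\sup_{x\in\R^d\setminus\Theta}\|\nu_\eps'(x)\|<\infty$.

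Finally I would convert this derivative bound into intrinsic Lipschitz continuity by a standard path-approximation argument. For $x,y\in\R^d\setminus\Theta$ with $\rho_{\R^d\setminus\Theta}(x,y)<\infty$ and $\delta>0$, choose a continuous curve $\gamma\colon[0,1]\to\R^d\setminus\Theta$ from $x$ to $y$ with $l(\gamma)\le\rho_{\R^d\setminus\Theta}(x,y)+\delta$. Since $\gamma([0,1])$ is compact and contained in the open set $\R^d\setminus\Theta$, a sufficiently fine polygonal interpolation $\tilde\gamma$ of $\gamma$ still lies in $\R^d\setminus\Theta$ and satisfies $l(\tilde\gamma)\le l(\gamma)$. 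Applying the fundamental theorem of calculus on each linear segment of $\tilde\gamma$ yields $\|\nu_\eps(y)-\nu_\eps(x)\|\le M\cdot l(\tilde\gamma)\le M(\rho_{\R^d\setminus\Theta}(x,y)+\delta)$, and letting $\delta\downarrow 0$ produces the intrinsic Lipschitz bound with constant $M$.
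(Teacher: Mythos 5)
Your proof is correct. The paper delegates this result to \cite[Lemma~8]{MGYR24}, and the analogous argument carried out explicitly in the present paper, namely the proof of Lemma~\ref{diffLip}, proceeds algebraically: each factor is shown to be intrinsic Lipschitz continuous and appropriately bounded, and the product lemma (Lemma~\ref{productnew0}) together with the composition lemma (Lemma~\ref{comp0}) is invoked term by term. You instead take an analytic route: you establish that $\nu_\eps$ is $C^1$ on the open set $\R^d\setminus\Theta$ with a globally bounded derivative, and then convert this into intrinsic Lipschitz continuity by integrating along polygonal approximations of rectifiable curves. That last conversion step is precisely the content of Lemma~\ref{diffintr0} in the appendix, so you could have simply cited it rather than re-derived it. The observation that makes your derivative bound go through, namely that every derivative of $G_\eps$ of order at least two is supported in $\Theta^\eps$, where (A)(v) gives boundedness of $\mu$ and $\sigma$, is the essential mechanism and is correctly identified; the remaining inputs (boundedness of $G_\eps'$ and of $G_\eps^{(2)},G_\eps^{(3)}$ from Lemma~\ref{Gdiff}, boundedness of $\mu'$ and $\sigma'$ on $\R^d\setminus\Theta$ from Lemma~\ref{propimp}(ii) and (B)) are also in place. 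The analytic route has the merit of being more elementary and of producing an explicit Lipschitz constant $\|\nu_\eps'\|_{\infty,\R^d\setminus\Theta}$; the algebraic product-lemma route is more modular and avoids computing the full derivative of a sum of triple products.
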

\begin{proof}
	See Lemma 8 in \cite{MGYR24}
\end{proof}
\begin{Lem} \label{exten}
	For every $\eps\in (0,\delta)$ and all $i \in \{1,\dots,d \}$, the mapping $G_{\eps,i}''\colon \R^{d} \setminus \Theta\to \R^{d\times d} $ can be extended to a bounded mapping $\mathcal{G}_{\eps,i}\colon \R^{d}  \to  \R^{d\times d}$ such that the function 
	\[
	\bar\nu_{\eps} = G_{\eps}'\mu + \frac{1}{2} \bigl(\tr(\mathcal{G}_{\eps,i}\,\sigma\sigma^{\top})\bigr)_{1\le i \le d}\colon \R^d\to \R^d
	\] 
	is a Lipschitz continuous extension of $\nu_{\eps}$ to $\R^{d}$.
\end{Lem}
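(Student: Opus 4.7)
Since by Lemma~\ref{Gdiff}(ii) and Lemma~\ref{Lip}(ii) the mapping $G''_{\eps,i}$ is already bounded and intrinsic Lipschitz on the open set $\R^d\setminus\Theta$, and since by Lemma~\ref{Gdiff}(i) we have $G'_\eps(x)=I_d$ for every $x\in\Theta$, the entire task is to prescribe a bounded value for $\mathcal{G}_{\eps,i}$ on $\Theta$ so that the resulting $\bar\nu_\eps$ is continuous on $\R^d$, and then to upgrade continuity to global Lipschitz continuity using Lemma~\ref{Lip1}.

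For the first step I would carry out a direct one-sided limit computation of $\nu_\eps$ at $x\in\Theta$. Fix $s\in\{+,-\}$ and let $y=x+sh\,\nor(x)\in Q_{\eps,s}$ with $h\downarrow 0$. The representation $\Phi_\eps(y)=s\,f_\eps(\|y-\pr_\Theta(y)\|^2)$ from \eqref{Phirep}, together with $f_\eps(0)=0$, $f'_\eps(0)=1$ and the standard identity $I_d-\pr_\Theta'(x)=\nor(x)\nor(x)^\top$ valid on $\Theta$, gives $\Phi_\eps(y)\to 0$, $\nabla\Phi_\eps(y)\to 0$ and $H\Phi_\eps(y)\to 2s\,\nor(x)\nor(x)^\top$. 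Differentiating $G_{\eps,i}(y)=y_i+\Phi_\eps(y)\alpha_i(\pr_\Theta(y))$ twice and passing to the limit, all cross terms carrying factors of $\Phi_\eps(y)$ or $\nabla\Phi_\eps(y)$ vanish (the gradients and Hessians of $\alpha_i\circ\pr_\Theta$ are bounded by Lemma~\ref{propconc}), and only $\alpha_i(\pr_\Theta(y))H\Phi_\eps(y)$ survives, yielding
\[
G''_{\eps,i}(x+sh\,\nor(x))\longrightarrow 2s\,\alpha_i(x)\,\nor(x)\nor(x)^\top.
\]
Consequently $\tr(G''_{\eps,i}\sigma\sigma^\top)\to 2s\,\alpha_i(x)\|\sigma(x)^\top\nor(x)\|^2$, which combined with Lemma~\ref{exlim}, the continuity $G'_\eps(y)\mu(y)\to\mu^s(x):=\lim_{h\downarrow 0}\mu(x+sh\,\nor(x))$, and the jump formula $\mu^-(x)-\mu^+(x)=2\alpha(x)\|\sigma(x)^\top\nor(x)\|^2$ from Assumption (A)(iii) implies that both one-sided limits of $\nu_\eps$ at $x$ equal $\tfrac12(\mu^+(x)+\mu^-(x))$.

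Having established this common limit, I would define for $x\in\Theta$
\[
\mathcal{G}_{\eps,i}(x)=\frac{\mu^+_i(x)+\mu^-_i(x)-2\mu_i(x)}{\|\sigma(x)^\top\nor(x)\|^2}\,\nor(x)\nor(x)^\top,
\]
which is bounded on $\Theta$ by Assumptions (A)(ii) and (A)(v) and which, thanks to $G'_\eps(x)=I_d$ on $\Theta$, forces $\bar\nu_\eps(x)=\tfrac12(\mu^+(x)+\mu^-(x))$ there. Together with the previous paragraph this makes $\bar\nu_\eps$ continuous on $\R^d$. To pass from continuity to Lipschitz continuity, Lemma~\ref{Lip1} and Lemma~\ref{lipimpl0}(ii) give that $\nu_\eps$ is Lipschitz on every convex subset of $\R^d\setminus\Theta$ with one and the same constant $L$; for arbitrary $x,y\in\R^d$ the segment $[x,y]$ meets the $C^3$-hypersurface $\Theta$ of positive reach in a closed subset of one-dimensional Lebesgue measure zero, so the continuous map $t\mapsto\bar\nu_\eps((1-t)x+ty)$ is absolutely continuous on $[0,1]$ with derivative bounded by $L\|x-y\|$ a.e., whence $\|\bar\nu_\eps(x)-\bar\nu_\eps(y)\|\le L\|x-y\|$. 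The main difficulty in this plan is the second-derivative asymptotics in the first step: pinpointing the exact cancellation between the discontinuity of $\mu$ and the limiting behaviour of $\alpha_i\,H\Phi_\eps$ is precisely where Assumption (A)(iii) and the specific choice of the Leobacher--Szölgyenyi cutoff $\phi$ enter the argument.
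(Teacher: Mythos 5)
Your first two paragraphs are essentially sound: you correctly compute the one-sided limits of $G''_{\eps,i}$ across $\Theta$ (using $I_d-\pr_\Theta'(x)=\nor(x)\nor(x)^\top$ and $f'_\eps(0)=1$), correctly trace through the cancellation coming from Assumption (A)(iii) to show that both one-sided limits of $\nu_\eps$ equal $\tfrac12(\mu^++\mu^-)$, and your choice of $\mathcal G_{\eps,i}$ on $\Theta$ is clever precisely because it compensates for the arbitrary value $\mu(x)$ on $\Theta$ while remaining bounded by (A)(ii) and (A)(v). That part of the argument is correct.

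The third paragraph, however, has a genuine gap. First, the assertion that the segment $[x,y]$ meets $\Theta$ in a set of one-dimensional Lebesgue measure zero is not justified and is in fact false under the stated hypotheses: take $d=2$ and $\Theta$ the graph of a $C^\infty$ function $g\colon\R\to\R$ that vanishes exactly on a fat Cantor set $K\subset[-1,1]$ (one constructs $g$ to be a suitably scaled bump on each complementary interval, small enough that all derivatives extend continuously by $0$; this $\Theta$ is an orientable $C^5$-hypersurface of positive reach). The segment $[-1,1]\times\{0\}$ then meets $\Theta$ in $K$, which has positive measure. Second, and independently, the implication you draw---``continuous, and Lipschitz off a closed measure-zero set, hence absolutely continuous with bounded derivative''---is false on its own: the Cantor function is continuous, has derivative $0$ almost everywhere, and is Lipschitz (even constant) on each complementary interval of the Cantor set, yet it is not absolutely continuous and not Lipschitz. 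So even if the measure-zero claim were true, the deduction of absolute continuity would need a separate argument. What is missing is some mechanism that controls the variation of $\bar\nu_\eps\circ\gamma$ \emph{on} the exceptional set $T=\gamma^{-1}(\Theta)$, not merely off it. One way to repair the step is to pass to a dense subfamily of pairs $(x,y)$ for which $[x,y]$ meets $\Theta$ transversally and hence in finitely many points (this holds for almost every direction from a fixed $x\notin\Theta$, by Sard's theorem applied to the projection $\Theta\ni z\mapsto(z-x)/\|z-x\|\in S^{d-1}$); for such pairs the finite-partition argument you sketch does give $\|\bar\nu_\eps(x)-\bar\nu_\eps(y)\|\le L\|x-y\|$, and the general case follows by continuity of $\bar\nu_\eps$. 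As written, though, the measure-zero argument does not close the proof. The paper itself defers this lemma to Lemma~9 of \cite{MGYR24}, so it does not exhibit the details in the text, but the point is that the passage from local to global Lipschitz continuity is exactly where the care is needed.
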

\begin{proof}
	See Lemma 9 in \cite{MGYR24}
\end{proof}
In the following, choose $\delta \in (0,\gamma)$ according to Lemma \ref{diffeo} and put $G = G_{\eps}$.
\begin{Lem}\label{transfinal}
	Let $\eps\in (0,\delta)$ and  choose $\bar\nu\colon \R^d\to\R^d$ according to Lemma~\ref{exten}.
	\begin{itemize}
		\item[(i)] The mapping $\mu_G\colon \R^d\to\R^d,\,\, x\mapsto (\bar\nu_{\epsilon}\circ G^{-1})(x)$ is Lipschitz continuous.
		\item[(ii)] The mapping $\sigma_G\colon\R^d\to\R^{d\times d},\,\, x\mapsto ((G'\sigma)\circ G^{-1})(x)$ is Lipschitz continuous with $\sigma_{G}(x) = \sigma(x)$ for every $x\in\Theta$.
	\end{itemize} 
\end{Lem}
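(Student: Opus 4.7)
\emph{Proof plan.} For (i), by Lemma~\ref{exten} the function $\bar\nu_\eps$ is Lipschitz continuous on $\R^d$, and by Lemma~\ref{Lip}(iii) $G^{-1}$ is Lipschitz continuous on $\R^d$. Hence $\mu_G = \bar\nu_\eps \circ G^{-1}$ is Lipschitz continuous as a composition of Lipschitz continuous functions, and (i) follows immediately.

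For (ii), the plan is to first establish that $G'\sigma \colon \R^d \to \R^{d\times d}$ is Lipschitz continuous on $\R^d$ and then conclude by composing with the Lipschitz continuous mapping $G^{-1}$. The main obstacle is that the naive product estimate $\|G'(x)\sigma(x) - G'(y)\sigma(y)\| \le \|G'\|_\infty\|\sigma(x) - \sigma(y)\| + \|\sigma(y)\|\,\|G'(x) - G'(y)\|$ is insufficient, since (B) together with Lemma~\ref{propimp}(i) only gives Lipschitz continuity of $\sigma$ with linear growth, not global boundedness. The decisive observation is that, by Lemma~\ref{Gdiff}(i), $G'(x) = I_d$ for every $x \in \R^d \setminus \Theta^\eps$, so the decomposition
\[
G'\sigma \;=\; \sigma \;+\; (G' - I_d)\sigma
\]
isolates the unbounded behaviour of $\sigma$ in the first summand, which is globally Lipschitz continuous by (B), while the second summand $h := (G' - I_d)\sigma$ is supported in $\cl(\Theta^\eps)$, where $\sigma$ is bounded.

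To show that $h$ is Lipschitz continuous on $\R^d$, I note that $\eps \in (0,\delta) \subset (0,\eps^*)$, so $\cl(\Theta^\eps) \subset \Theta^{\eps^*}$, and $\sigma$ is bounded on $\Theta^{\eps^*}$ by (A)(v). On $\cl(\Theta^\eps)$ the factor $G' - I_d$ is bounded and Lipschitz continuous by Lemmas~\ref{Gdiff}(i) and~\ref{Lip}(i), and $\sigma$ is bounded and Lipschitz continuous, so the product $h$ is Lipschitz continuous on $\cl(\Theta^\eps)$ with some constant $\bar L$. Outside $\Theta^\eps$ we have $h \equiv 0$, and by continuity of $G'$ (Lemma~\ref{Gdiff}(i)) together with $G' = I_d$ on $\R^d\setminus \Theta^\eps$, it follows that $G'(x) = I_d$, and hence $h(x) = 0$, for every $x \in \partial \Theta^\eps$. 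Gluing the two pieces is then a standard segment argument: for $x \in \Theta^\eps$ and $y \in \R^d \setminus \Theta^\eps$, the segment $\overline{x,y}$ meets $\partial\Theta^\eps$ at some $z$ with $h(z) = 0$, yielding
\[
\|h(x) - h(y)\| \;=\; \|h(x) - h(z)\| \;\le\; \bar L\,\|x - z\| \;\le\; \bar L\,\|x - y\|.
\]
Therefore $G'\sigma = \sigma + h$ is Lipschitz continuous on $\R^d$, and $\sigma_G = (G'\sigma)\circ G^{-1}$ is Lipschitz continuous on $\R^d$ by composition with the Lipschitz continuous $G^{-1}$.

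Finally, for the identity $\sigma_G(x) = \sigma(x)$ on $\Theta$, I use that $\Phi_\eps$ vanishes on $\Theta$ by its definition, so $G(x) = x$, and hence $G^{-1}(x) = x$, for every $x \in \Theta$, and that $G'(x) = I_d$ on $\Theta$ by Lemma~\ref{Gdiff}(i). This gives $\sigma_G(x) = G'(x)\sigma(x) = \sigma(x)$ for $x \in \Theta$. The only non-routine step in the whole argument is the global Lipschitz extension of $h$; everything else is an assembly of previously established lemmas.
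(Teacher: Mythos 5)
Your proof is correct. The paper itself does not spell out the argument but defers to Lemma~10 of \cite{MGYR24}; the route you take is the natural one and is almost certainly what that reference does. Part~(i) is a straightforward composition of Lipschitz maps. For part~(ii), the key point you correctly isolate is that Lipschitz continuity of a product $(G'\sigma)$ is not automatic when one factor ($\sigma$) has only linear growth; the decomposition $G'\sigma = \sigma + (G'-I_d)\sigma$, exploiting that $G'-I_d$ vanishes outside $\Theta^\eps$ where $\sigma$ is not bounded, is exactly the right fix. Your segment-gluing argument for the compactly supported remainder $h$ is sound: the pointwise product estimate for $h$ on $\cl(\Theta^\eps)$ does not require convexity of that set, $\partial\Theta^\eps\subset\R^d\setminus\Theta^\eps$ because $\Theta^\eps$ is open (so $h$ vanishes on the boundary without even needing to invoke continuity of $G'$), and a segment from inside to outside must cross $\partial\Theta^\eps$. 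The final identity $\sigma_G=\sigma$ on $\Theta$ follows, as you say, from $G|_\Theta=\mathrm{id}$ and $G'|_\Theta=I_d$. No gaps.
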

\begin{proof}
	See Lemma 10 in \cite{MGYR24}
\end{proof}
\begin{Lem} \label{diffLip}
	The functions $\mu_{G}$ and $\sigma_{G}$ are $C^{1}$ on $\R^{d}\setminus \Theta$ and $(\mu_{G})_{|\R^{d} \setminus \Theta}'$ as well as $((\sigma_{G})_{j})_{\R^{d}\setminus \Theta}'$ for $j \in \{1,\dots,d \}$ are intrinsic Lipschitz continuous.
\end{Lem}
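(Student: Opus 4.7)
The plan is to expand $\sigma_G=(G'\sigma)\circ G^{-1}$ and $\mu_G=\bar\nu_\epsilon\circ G^{-1}$ via the chain and product rules and then verify intrinsic Lipschitz bounds factor by factor. Because $G(x)=x$ for $x\in\Theta$, the diffeomorphism $G$ restricts to a bijection of $\R^d\setminus\Theta$ onto itself. Combining the $C^4$-regularity of $G$ on $\R^d\setminus\Theta$ (Lemma~\ref{Gdiff}(ii)), the $C^2$-regularity of $G^{-1}$ on $\R^d\setminus\Theta$ (Lemma~\ref{Lip}(iv)), and the $C^1$-regularity of $\mu,\sigma$ on $\R^d\setminus\Theta$ from (A)(iv), the chain rule at once gives that $\sigma_G,\mu_G$ are $C^1$ on $\R^d\setminus\Theta$ with
\[
(\sigma_G)'=\bigl((G''\sigma+G'\sigma')\circ G^{-1}\bigr)\,(G^{-1})',\qquad (\mu_G)'=(\nu_\epsilon'\circ G^{-1})\,(G^{-1})',
\]
where $\nu_\epsilon=G'\mu+\tfrac12\bigl(\tr(G''_i\sigma\sigma^\top)\bigr)_{1\le i\le d}$, which coincides with $\bar\nu_\epsilon$ on $\R^d\setminus\Theta$ since $\mathcal{G}_{\epsilon,i}=G''_i$ there, and $G''_i$ denotes the Hessian of the $i$-th component of $G$.

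To conclude it suffices to show that each of the building blocks $G'\sigma'$, $G''\sigma$, $G'\mu'$, $G''\mu$, $G'''_i\sigma\sigma^\top$, $G''_i\sigma'\sigma^\top$ and $G''_i\sigma(\sigma')^\top$ (with traces taken where appropriate) is bounded and intrinsic Lipschitz continuous on $\R^d\setminus\Theta$, after which two soft facts close the argument: (a) a product of bounded intrinsic Lipschitz functions is intrinsic Lipschitz, and (b) precomposition with a Lipschitz map $F\colon\R^d\setminus\Theta\to\R^d\setminus\Theta$ preserves intrinsic Lipschitz continuity, because any path in $\R^d\setminus\Theta$ joining $x,y$ is mapped by $F$ to a path joining $F(x),F(y)$ of length at most $\mathrm{Lip}(F)$ times the original, yielding $\rho_{\R^d\setminus\Theta}(F(x),F(y))\le\mathrm{Lip}(F)\,\rho_{\R^d\setminus\Theta}(x,y)$. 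Principle (b) applied with $F=G^{-1}$ disposes of the outer composition using Lemma~\ref{Lip}(iii), and the multiplicative factor $(G^{-1})'$ is globally bounded and Lipschitz by Lemmas~\ref{diffeo} and~\ref{Lip}(iii). The easy summand $G'\sigma'$ (and analogously $G'\mu'$) is bounded intrinsic Lipschitz by combining Lemmas~\ref{Gdiff}(i) and~\ref{Lip}(i) for $G'$ with Lemma~\ref{propimp}(ii) and assumption (A)(vi) for $\sigma'$ (respectively~$\mu'$) via principle (a).

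The main obstacle is the remaining factors of the form $G^{(k)}\cdot[\sigma,\mu,\text{ or }\sigma\sigma^\top]$ with $k\in\{2,3\}$, because $\mu$ and $\sigma$ fail to be globally bounded. The key observation is that $G'(x)=I_d$ on $\R^d\setminus\Theta^\epsilon$ (Lemma~\ref{Gdiff}(i)), so by Lemma~\ref{diff2}(iv) all derivatives of $G$ of order at least two vanish identically on $\R^d\setminus\cl(\Theta^\epsilon)$. In each problematic product one may therefore replace $\mu,\sigma$ by bounded Lipschitz extensions $\tilde\mu,\tilde\sigma$ agreeing with $\mu,\sigma$ on $\cl(\Theta^\epsilon)$, and such extensions exist because $\mu,\sigma$ are bounded there by (A)(v) and intrinsic Lipschitz there thanks to (B) combined with (A)(vi). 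The remaining factor $G^{(k)}$ is bounded on $\R^d\setminus\Theta$ by Lemma~\ref{Gdiff}(ii) and intrinsic Lipschitz there: for $k=2$ this is Lemma~\ref{Lip}(ii), and for $k=3$ it is the elementary observation that a $C^1$ function on the open set $\R^d\setminus\Theta$ with bounded derivative is intrinsic Lipschitz with constant at most that bound, applied to $G'''$ using the bound on $G^{(4)}$ from Lemma~\ref{Gdiff}(ii). Principle (a) then produces the desired bounded intrinsic Lipschitz estimate for every ingredient, and assembling everything proves that $(\sigma_G)'$ and $(\mu_G)'$ are intrinsic Lipschitz continuous on $\R^d\setminus\Theta$.
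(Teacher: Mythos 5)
Your proposal is correct and follows essentially the same strategy as the paper: expand $(\sigma_G)'$ and $(\mu_G)'$ by the chain and product rules, identify the same list of factors (involving $G',G'',G'''$, $\mu,\mu',\sigma,\sigma'$, and $G^{-1},(G^{-1})'$), and establish boundedness and intrinsic Lipschitz continuity of each factor on $\R^d\setminus\Theta$, exploiting that derivatives of $G$ of order $\ge 2$ vanish outside $\Theta^\eps$ so that the unboundedness of $\mu,\sigma$ can be ignored. The one genuine variation is how this last point is handled. The paper invokes Lemma~\ref{productnew0}, whose hypotheses (intrinsic Lipschitz on $A$, bounded on $B$, one factor constant on $C\setminus B$) are tailored to avoid ever needing a globally bounded version of $\mu$ or $\sigma$. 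You instead use the elementary product rule for \emph{bounded} intrinsic Lipschitz functions and then replace $\mu,\sigma$ in the offending products by bounded modifications agreeing with them on the region where $G''$ can be nonzero. Both routes are valid; the paper's lemma packages the argument once and for all, while your truncation approach is perhaps more transparent about \emph{why} the unboundedness is harmless.

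Two small points of precision. First, you describe $\tilde\mu$ as a ``bounded Lipschitz extension agreeing with $\mu$ on $\cl(\Theta^\eps)$''. Since $\mu$ is typically discontinuous on $\Theta\subset\cl(\Theta^\eps)$, no globally Lipschitz function can agree with $\mu$ there. What you actually need --- and what does exist --- is a \emph{bounded, intrinsic Lipschitz on $\R^d\setminus\Theta$} function agreeing with $\mu$ on $\cl(\Theta^\eps)\setminus\Theta$; one can take $\tilde\mu=T_M\circ\mu$ where $T_M$ is the $1$-Lipschitz radial truncation at level $M\ge\sup_{\Theta^\eps}\|\mu\|$ and composition with a Lipschitz map preserves intrinsic Lipschitz continuity (for $\sigma$, which is globally Lipschitz, the analogous $\tilde\sigma=T_M\circ\sigma$ is genuinely Lipschitz). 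Second, you assert existence of such extensions without construction; the truncation $T_M\circ(\cdot)$ is the one-line fix worth stating, since a generic extension argument would not obviously preserve the required intrinsic Lipschitz property. With these clarifications your proof is complete and mirrors the paper's.
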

\begin{proof}
	We keep in mind that $G^{-1}(\Theta) = \Theta$ follows from the fact that $G(x) = x$ holds for $x \in \Theta$.
	It thus follows from Assumption (A)(iv) and Lemma \ref{Lip} i),iii) that $\sigma_{G}$ is $C^{1}$ on $\R^{d}\setminus \Theta$ and that for $x \in \R^{d}\setminus \Theta$ and $i,j \in \{1,\dots,d \}$ we have
	\begin{align*}
		&\Bigl(\bigl((\sigma_{G})_{j}\bigr)'\Bigr)^{i}(x) = ((G_{i})'\sigma_{j})'(G^{-1}(x))(G^{-1})'(x) \\
		&= \bigl(\sigma_{j}^{\top} (G_{i})'' + (G_{i})' \sigma_{j}'\bigr)(G^{-1}(x))(G^{-1})'(x).
	\end{align*}
	Now $(G_{i})_{|\R^{d}\setminus \Theta}''$ is intrinsic Lipschitz continuous and bounded according to Lemma \ref{Lip} (iv). Moreover we have $(G_{i})''(x) = 0$ for $x \in \R^{d}\setminus \Theta^{\eps}$. Furthermore $\sigma$ is Lipschitz continuous according to (B) and bounded on $\Theta^{\eps}$ according to (A)(v). Thus by Lemma \ref{productnew0} with $D = A = \R^{d} \setminus \Theta$, $B = \Theta^{\epsilon}\setminus \Theta$, $f = \sigma_{j}^{\top}$ and $g = (G_{i})''$ the function $\sigma_{j}^{\top} (G_{i})''$ is intrinsic Lipschitz continuous on $\R^{d}\setminus \Theta$ and bounded on $\Theta^{\eps}$. 
	
	Moreover, $G'$ is Lipschitz continuous and bounded according to Lemma \ref{Gdiff}(i). Now according to (A)(vi) and Lemma \ref{lingrowth} (i) the function $(\sigma_{j})'$ is intrinsic Lipschitz continuous and bounded on $\R^{d}\setminus \Theta$. With Lemma \ref{productnew0}, applied with $A = D = B = \R^{d}\setminus \Theta$, $f = G_{i}'$ and $g = \sigma_{j}'$ we obtain that $G_{i}'\sigma_{j}'$ is intrinsic Lipschitz continuous on $\R^{d}\setminus \Theta$ and bounded.
	Moreover, $G^{-1}$ is Lipschitz continuous according to Lemma \ref{Lip}(iii) and we have $G^{-1}(\Theta)= \Theta$ and $G^{-1}(\Theta^{\eps}) = \Theta^{\eps}$. Also $(G^{-1})'$ is Lipschitz continuous and bounded according to Lemma \ref{Lip}(iii). Since $(G^{-1})'(x) = Id$ holds for $x \in \R^{d}\setminus \Theta^{\eps}$ we overall obtain that $\Bigl(\bigl((\sigma_{G})_{j}\bigr)'\Bigr)^{i}$ is intrinsic Lipschitz continuous on $\R^{d}\setminus \Theta$ with Lemma \ref{comp0} and Lemma \ref{productnew0} applied with $A = D = \R^{d}\setminus \Theta$, $B = \Theta^{\epsilon}\setminus \Theta$, $f = \bigl(\sigma_{j}^{\top} (G_{i})'' + ((G_{i})' \sigma_{j}')\bigr) \circ G^{-1}$ and $g = (G^{-1})'$.
	
	Next recall that 
	\[
	\mu_G(x) = \Bigl(G'\,\mu + \frac{1}{2} \bigl( \tr\bigl(G_i''\,\sigma\sigma^{\top}\bigr)\bigr)_{1\le i \le d}\Bigr)\circ G^{-1}(x)
	\]
	for $x \in \R^{d} \setminus \Theta$.
	It follows from Assumption (A)(iv) and Lemma  \ref{Lip}(ii) and Lemma \ref{diffeo} that $\mu_{G}$ is $C^{1}$ on $\R^{d}\setminus \Theta$ and that we have for $x \in \R^{d} \setminus \Theta$
	\begin{align*}
		&\Bigl(\bigl( \mu_{G} \bigr)_{j} \Bigr)'(x) = \\
		& (G_{j}'\mu)'(G^{-1}(x))(G^{-1})'(x) + \frac{1}{2} \Bigl[[tr(G_{j}'' \sigma\sigma^{\top})]'(G^{-1}(x)) (G^{-1})'(x)\Bigr] =: f(x) + g(x) 
	\end{align*}
	Now we have, by the product rule
	\begin{align*}
		f(x) = \Bigl((\mu^{\top} G_{j}'') + (G_{j})'\mu' \Bigr)\bigl(G^{-1}(x)\bigr)\bigl(G^{-1}\bigr)'(x)
	\end{align*}
	Now $\mu_{|\R^{d}\setminus \Theta}$ is intrinsic Lipschitz continuous according to (A)(vi) and bounded on $\Theta^{\eps}$ according to (A)(iii). Thus we obtain in the same way as for $\sigma_{G}'$ that the function $f$ is intrinsic Lipschitz
	on $\R^{d}\setminus \Theta$ and bounded on $\Theta^{\eps}$. Moreover we have for $i \in \{1,\dots,d \}$ by Lemma \ref{Gdiff} ii) and Assumption A(iv)
	\begin{align*}
		&\Bigl((G_{j}''\sigma\sigma^{\top})_{i,i} \Bigr)'(x) = \Bigl((G_{j}'')^{i} \sigma(\sigma^{\top})_{i} \Bigr)'(x) \\
		&= \Bigl((\frac{\partial}{\partial x_{i}}G_{j})'\sigma (\sigma^{i})^{\top} \Bigr)'(x) = \Bigl( \sigma^{i}\sigma^{\top}(\frac{\partial}{\partial x_{i}}G_{j})'' + (\frac{\partial}{\partial x_{i}} G_{j})' (\sigma (\sigma^{i})^{\top})'\Bigr)(x)
	\end{align*}
	Now the function $(\frac{\partial}{\partial x_{i}}G_{j})''$ is intrinsic Lipschitz continuous and bounded on $\R^{d}\setminus \Theta$ according to Lemma \ref{Gdiff}(ii). Since $\sigma$ is bounded on $\Theta^{\eps}$ and Lipschitz continuous according to (A)(v) and (B) we obtain with Lemma \ref{productnew0}, first applied with $A = D = \R^{d}\setminus \Theta$, $B = \Theta^{\eps}\setminus \Theta$, $f = ({\partial x_{i}}G_{j})''$ and $g = \sigma^{\top}$, then with $A = D = \R^{d}\setminus \Theta$, $B = \Theta^{\eps}\setminus \Theta$, $f = \sigma^{\top}({\partial x_{i}}G_{j})''$ and $g = \sigma^{i}$ that $\sigma^{i}\sigma^{\top}(\frac{\partial}{\partial x_{i}}G_{j})''$ is intrinsic Lipschitz continuous on $\R^{d}\setminus \Theta$ and bounded on $\Theta^{\eps}$.
	Moreover, for $j \in \{1,\dots,d \}$ we have 
	\begin{align*}
		\bigl((\sigma (\sigma^{i})^{\top})'\bigr)^{j} = (\sigma^{j} (\sigma^{i})^{\top})' = \sigma^{i}(\sigma^{j})' + \sigma^{j}\bigl((\sigma^{i})^{\top}\bigr)'
	\end{align*}
	Now $\sigma$ is Lipschitz continuous and bounded on $\Theta^{\eps}$ and $(\sigma_{j})'$ is intrinsic Lipschitz continuous and bounded on $\R^{d}\setminus \Theta$.
	Moreover, $(\frac{\partial}{\partial x_{i}}G_{j})'$ is intrinsic Lipschitz continuous and bounded on $\R^{d}\setminus \Theta$ and we have  $(\frac{\partial}{\partial x_{i}}G_{j})'(x) = 0$ on $\R^{d}\setminus \Theta^{\eps}$.
	Hence, similar to above, we obtain that the function
	\[
		(\frac{\partial}{\partial x_{i}}G_{j})'\bigl(\sigma (\sigma^{i})^{\top}\bigr)' = \sum_{k = 1}^{d} \Bigl((\frac{\partial}{\partial x_{i}}G_{j})'\Bigr)_{k}\Bigl(\bigl(\sigma (\sigma^{i})^{\top}\bigr)'\Bigr)^{k}
	\]
	is intrinsic Lipschitz continuous on $\R^{d}\setminus \Theta$ according to Lemma \ref{productnew0} and bounded on $\Theta^{\eps}$. Keeping in mind that the function $G^{-1}$ is Lipschitz continuous and that we have $G^{-1}(\Theta)= \Theta$ we obtain with Lemma \ref{comp0} that
	this also holds for the function 
	\[
	\frac{1}{2} \Bigl(\tr(G_{j}''\sigma \sigma^{\top})\Bigr)' \circ G^{-1} = \frac{1}{2} \sum_{i = 1}^{d} \bigl((G_{j}''\sigma\sigma^{\top})_{i,i}\bigr)' \circ G^{-1}.
	\]
	Now recall that $(G^{-1})'$ is Lipschitz continuous and that $(G^{-1})'(x) = Id$ on $\R^{d}\setminus \Theta^{\eps}$. Thus the function $g$ is intrinsic Lipschitz continuous on $\R^{d}\setminus \Theta$ according to Lemma \ref{productnew0}. Overall we obtain that $\mu_{G}'$ is intrinsic Lipschitz continuous on $\R^{d}\setminus \Theta$.
\end{proof}
\begin{proof}[Proof of Propostion~\ref{grep}]
Choose $\tilde \eps\in (0,\reach(\Theta))$ according to Lemma~\ref{propconc}, let $\gamma=\min(\tilde \eps, \eps^*)$, choose $\delta\in (0,\gamma)$ according to Lemma~\ref{diffeo}, let $\eps\in (0,\delta)$ and put $G=G_\eps$. Then part (i) of Proposition~\ref{grep} is a consequence of Lemma~\ref{diffeo}. Part (ii) follows from Lemma~\ref{Gdiff}(i), Lemma~\ref{diffeo} and Lemma~\ref{Lip}(i),(iii). Part (iii) of the proposition follows from Lemma~\ref{Gdiff}(ii) and Lemma~\ref{Lip}(ii),(iv). Part (iv) of the proposition follows from Lemma~\ref{transfinal}(ii). Part (v) is a consequence of Lemma~\ref{exten}and Lemma~\ref{transfinal}(i). Part (vi) and (vii) follow from Lemma \ref{Gdiff} ii) and part (viii) follows from Lemma \ref{diffLip}.
\end{proof}


In our application it will be essential that Itô's formula can be applied to the transformation mapping $G$ and the solution $X$.
To this end, we first recall the following theorem, which is Theorem 4 from \cite{MGYR24}, and we will then move on to showing that it provides us with an
Itô-formula suitable for our needs.

 \begin{Thm} \label{Ito_new}
	Let $\alpha = (\alpha_t)_{t\in [0,1]}$ be an $\R^d$-valued, measurable, adapted process with
	\begin{equation}\label{a1}
	\int_0^1 \|\alpha_t\|\, dt <\infty
	\end{equation}
	almost surely, let $r\in (2,\infty)$, and let $\beta=(\beta_t)_{t\in [0,1]}$ and $\gamma = (\gamma_t)_{t\in [0,1]}$ be $\R^{d\times d}$-valued, measurable, adapted processes with
	\begin{equation}\label{a2}
	\int_0^1 \EE \bigl[\|\beta_t\|^2\bigr]\, dt  <\infty,
	\end{equation}
	and
	\begin{equation}\label{a3}
		\int_0^1  \|\gamma_t\|^r\, dt<\infty
		\end{equation}
		almost surely. Let $y_0\in \R^d$
	and  let $Y=(Y_t)_{t\in [0,1]}$ be the  continuous semi-martingale given by 
	\[
	Y_t = y_0 + \int_0^t \alpha_s\, ds + \int_0^t \beta_s\, dW_s,\quad t\in [0,1].
	\]
	Furthermore, let $f\colon\R^d\to\R$ be a $C^1$-function with bounded, Lipschitz continuous derivative $f'\colon\R^d\to \R^{1\times d}$ and let $f''\colon\R^d\to\R^{d\times d}$ be a bounded weak derivative of $f'$. Let $M\subset \R^d$ be closed and assume that $f$ is a $C^2$-function on  $\R^d\setminus M$. Finally, let $\delta\in (0,\infty)$ and let $g\colon\R^d\to \R$ be a $C^2$-function with $M\subset \{g=0\}$ and $\prob$-a.s.
	\begin{equation} \label{undgbd}
	\inf_{t\in [0,1]} ( \|g'(Y_t)\gamma_t\| - \delta)\one_{\{Y_t\in M\}} \ge 0
	\end{equation}
	almost surely. Then, $\mathbb{P}$-almost surely
	\begin{equation*}
		\begin{aligned}
		&	\sup_{t \in [0,1]} \Big| f(Y_t) - f(y_{0})   - \int_{0}^{t} \bigl(f'(Y_s) \alpha_{s} 
			- \frac{1}{2} \tr( f''(Y_s)\beta_s\beta_s^\top)\bigr) \, ds  - \int_{0}^{t} f'(Y_s) \beta_s\; dW_{s}\Big| \\
			&\leq \|f''\|_{\infty} \int_0^1 \bigl \| \beta_t\beta_t^\top-\gamma_t\gamma_t^\top\bigr\|^2\, dt. 
		\end{aligned}
	\end{equation*}
\end{Thm}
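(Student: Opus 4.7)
The plan is to establish the formula by a mollification argument combined with an occupation-time analysis for the one-dimensional semi-martingale $g(Y)$.

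First I would introduce smooth approximations $f_\epsilon := f\ast \rho_\epsilon$ by convolution with a standard mollifier $\rho_\epsilon$. Then $f_\epsilon\in C^\infty(\R^d)$ with $\|f_\epsilon''\|_\infty\le\|f''\|_\infty$, and $f_\epsilon\to f$ as well as $f_\epsilon'\to f'$ locally uniformly (exploiting the Lipschitz continuity of $f'$), while $f_\epsilon''\to f''$ Lebesgue-almost everywhere. For each $\epsilon>0$ the classical It\^o formula applies to $f_\epsilon(Y_t)$:
\[
f_\epsilon(Y_t) = f_\epsilon(y_0) + \int_0^t f_\epsilon'(Y_s)\alpha_s\,ds + \int_0^t f_\epsilon'(Y_s)\beta_s\,dW_s + \tfrac12\int_0^t \tr\bigl(f_\epsilon''(Y_s)\beta_s\beta_s^\top\bigr)\,ds.
\]
The left-hand side and the first three right-hand terms converge to their counterparts for $f$ as $\epsilon\downarrow 0$: the drift integral by dominated convergence using \eqref{a1} together with $\|f_\epsilon'\|_\infty\le\|f'\|_\infty$, and the stochastic integral via Burkholder-Davis-Gundy and \eqref{a2}. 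A localisation by stopping at exit times of large balls, combined with a BDG moment estimate, lifts pointwise convergence to the uniform-in-$t$ convergence demanded by the statement.

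The delicate step is the trace term. Since $f$ is $C^2$ on the open set $\R^d\setminus M$, we have $f_\epsilon''(x)\to f''(x)$ for every $x\notin M$, so by bounded convergence it is enough to control the random time set $\{s\in[0,1]: Y_s\in M\}$, more precisely to estimate $\int_0^1 \one_{\{Y_s\in M\}}\|\beta_s\|^2\,ds$. This is where the auxiliary function $g$ enters: since $M\subset\{g=0\}$ and $g\in C^2$, It\^o's formula (applied to the smooth $g$) shows that $Z_s:=g(Y_s)$ is a continuous semi-martingale whose quadratic variation has density $\|g'(Y_s)\beta_s\|^2$. An occupation-density argument for one-dimensional semi-martingales then gives
\[
\int_0^1\one_{\{Z_s=0\}}\|g'(Y_s)\beta_s\|^2\,ds=0 \quad \PP\text{-a.s.},
\]
so whenever $\|g'(Y_s)\beta_s\|$ is bounded below on $\{Y_s\in M\}$ the process $Y$ visits $M$ on a Lebesgue-null set of times.

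The main obstacle is that the hypothesis \eqref{undgbd} controls $\|g'(Y_s)\gamma_s\|$ rather than $\|g'(Y_s)\beta_s\|$ on $M$, and it is precisely this mismatch that is responsible for the error term in the bound. My strategy is to decompose
\[
\tr\bigl(f_\epsilon''(Y_s)\beta_s\beta_s^\top\bigr) = \tr\bigl(f_\epsilon''(Y_s)\gamma_s\gamma_s^\top\bigr) + \tr\bigl(f_\epsilon''(Y_s)(\beta_s\beta_s^\top-\gamma_s\gamma_s^\top)\bigr).
\]
For the first piece the proxy $\gamma$ allows one to apply the non-degeneracy assumption and the occupation-time argument above to conclude that the contribution from $\{Y_s\in M\}$ vanishes in the limit, yielding the correct $\tr(f''(Y_s)\beta_s\beta_s^\top)$ after re-adding the second piece. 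The second piece is controlled pointwise by $\|f''\|_\infty\,\|\beta_s\beta_s^\top-\gamma_s\gamma_s^\top\|$; combining it with the quantitative form of the occupation-time estimate (the exponent $2$ in the claimed bound arising because the non-degeneracy is phrased for $\|g'\gamma\|$ while the quadratic variation of $Z$ involves squared norms) produces the residual $\|f''\|_\infty\int_0^1\|\beta_t\beta_t^\top-\gamma_t\gamma_t^\top\|^2\,dt$. I expect the technical heart of the argument to be precisely this final bookkeeping, i.e.\ verifying that the Krylov-type occupation-time bound holds under the minimal integrability assumptions \eqref{a1}--\eqref{a3} and that the error coming from the $\beta/\gamma$ mismatch aggregates into the claimed form.
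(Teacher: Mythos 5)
The statement you are proving is cited in the paper as Theorem~4 from \cite{MGYR24}; the present paper does not contain its proof. Your overall architecture — mollify $f$, apply the classical It\^o formula to $f_\epsilon(Y)$, pass to the limit in the drift/martingale terms, and control the trace term via the occupation density of the one-dimensional semi-martingale $Z=g(Y)$ — is the right skeleton, and you correctly identify the $\beta/\gamma$ mismatch as the source of the error term. However, the plan as written has a genuine gap in the step you flag as the ``delicate'' one.

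You claim that, after the decomposition $\tr(f_\epsilon''\beta\beta^\top)=\tr(f_\epsilon''\gamma\gamma^\top)+\tr(f_\epsilon''(\beta\beta^\top-\gamma\gamma^\top))$, ``the contribution from $\{Y_s\in M\}$ vanishes in the limit'' for the $\gamma$-piece, by combining the non-degeneracy of $\|g'(Y_s)\gamma_s\|$ with the occupation-time identity. This is not correct as stated. The occupation-density formula for $Z=g(Y)$ gives $\int_0^1\one_{\{Z_s=0\}}\|g'(Y_s)\beta_s\|^2\,ds=0$ a.s.\ — the quadratic variation involves $\beta$, not $\gamma$ — and nothing forces $\int_0^1\one_{\{Y_s\in M\}}\|\gamma_s\|^2\,ds$ (which is what you would need for the $\gamma$-piece to vanish) to be zero. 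At best the two facts combine to give a bound of the type $\delta^2\int_0^1\one_{\{Y_s\in M\}}ds\le\|g'\|_\infty^2\int_0^1\one_{\{Y_s\in M\}}\|\beta_s\beta_s^\top-\gamma_s\gamma_s^\top\|\,ds$, so the occupation time of $M$ is small but not zero; it is controlled by the mismatch, not annihilated by it. Consequently both pieces of your decomposition contribute to the residual, and the bound on $\int_0^1\one_{\{Y_s\in M\}}\|\gamma_s\|^2\,ds$ requires a further H\"older argument with exponent $r/2$ — which is exactly where the unused hypothesis $r>2$ in \eqref{a3} must enter, something your sketch never invokes. Similarly, your heuristic for the exponent $2$ in the claimed error term (``the quadratic variation of $Z$ involves squared norms'') is not a derivation; the square is more plausibly produced by an application of Cauchy--Schwarz when converting $\int_0^1\one_{\{Y_s\in M\}}\|\beta_s\beta_s^\top-\gamma_s\gamma_s^\top\|\,ds$ into the claimed form. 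As it stands, the quantitative heart of the proof — precisely the part you describe as ``final bookkeeping'' — contains the one step that can fail, so the argument is incomplete rather than merely unpolished. (Separately: the paper's displayed formula carries a sign error in front of the trace term; your write-up uses the correct sign, which is the one needed in Corollary~\ref{ItoE}.)
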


\begin{Cor} \label{ItoE}
	Assume $\mu$ and $\sigma$ fulfil the Assumptions (A) and (B). Let $G$ be a function which fulfils i)-iii) from Proposition \ref{grep}. Choose any bounded extension of $((G_{k})_{|\R^{d} \setminus \Theta})''$ to $\R^{d}$ and let $p \in [1,\infty)$. Then we have $\prob$-a.s. for all $t \in [0,1]$ and $k \in \{1,\dots,d \}$
	\begin{equation*}
		\begin{split}
			G_{k}(X_{t}) =  G_{k}(x_{0})   + \int_{0}^{t} G_{k}'(X_{s}) \mu(X_{s}) 
			 + \frac{1}{2} \tr(G_{k}''(X_{s}) \sigma(X_{s})\sigma(X_{s})^{\top}) ds 
			 + \int_{0}^{t} G_{k}'(X_{s}) \sigma(X_{s}) dW_{s}
		\end{split}
	\end{equation*}
\end{Cor}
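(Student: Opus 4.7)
The plan is to invoke Theorem~\ref{Ito_new} with $Y=X$, drift $\alpha_s=\mu(X_s)$, diffusion coefficient $\beta_s=\sigma(X_s)$, comparison process $\gamma_s=\sigma(X_s)$, exceptional set $M=\Theta$, and $f=G_k$. Since $\beta_s=\gamma_s$ identically, the error term $\int_0^1\|\beta_t\beta_t^\top-\gamma_t\gamma_t^\top\|^2\,dt$ on the right hand side of the conclusion of Theorem~\ref{Ito_new} vanishes, so the theorem collapses to the desired Itô identity as an equality for every $t\in[0,1]$ on a single event of full measure.

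First I would verify the integrability hypotheses \eqref{a1}, \eqref{a2}, \eqref{a3}. Lemma~\ref{propimp}(i) yields the linear growth of $\mu$ and $\sigma$, and the strong solution $X$ of \eqref{SDE} (whose existence under (A), (B) is established in \cite{LS17}) satisfies $\EE[\sup_{t\in[0,1]}\|X_t\|^p]<\infty$ for every $p\in[1,\infty)$ by standard Gronwall-type moment estimates; the three integrability conditions then follow for any $r>2$. The regularity required of $f=G_k$ is precisely what Proposition~\ref{grep}(ii),(iii) provides: $G_k\in C^1(\R^d)$ with bounded Lipschitz derivative, and $G_k\in C^2(\R^d\setminus\Theta)$ with bounded second derivative. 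Since $\Theta$ is a $C^5$-hypersurface and therefore Lebesgue null, any bounded extension of $((G_k)_{|\R^d\setminus\Theta})''$ to $\R^d$ automatically coincides with the Sobolev weak derivative of the Lipschitz function $G_k'$ almost everywhere, and is thus a bounded weak derivative of $G_k'$ in the sense needed by Theorem~\ref{Ito_new}.

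The heart of the argument is the construction of the auxiliary function $g$. I would pick $\tilde\eps\in(0,\reach(\Theta))$ as in Lemma~\ref{propconc}, so that $\pr_\Theta$ and $\nor\circ\pr_\Theta$ are $C^4$ on $\Theta^{\tilde\eps}$, define the signed-distance-type function $s(x)=\nor(\pr_\Theta(x))^\top(x-\pr_\Theta(x))$ on $\Theta^{\tilde\eps}$, and multiply by a smooth cut-off of $\|x-\pr_\Theta(x)\|$ that is identically one on $[0,\tilde\eps/4]$ and supported in $[0,\tilde\eps/2]$, extending $g$ by zero outside $\Theta^{\tilde\eps}$. The resulting $g$ is compactly supported and $C^2$ on $\R^d$ with $g\equiv 0$ on $\Theta$. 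A direct computation at $x\in\Theta$, using $\pr_\Theta(x)=x$, $s(x)=0$, and the fact from \app{Lemma~\ref{projdist}(iii)} that $I_d-\pr_\Theta'(x)$ is the orthogonal projection onto the normal line at $x$, collapses to $g'(x)=\nor(x)^\top$. Assumption (A)(ii) then yields the uniform lower bound $\|g'(x)\sigma(x)\|=\|\nor(x)^\top\sigma(x)\|\geq\delta>0$ for $x\in\Theta$, which is exactly the nondegeneracy hypothesis \eqref{undgbd} with $\gamma_t=\sigma(X_t)$.

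The main technical obstacle is this construction: one must produce a globally $C^2$ function that both vanishes on $\Theta$ and satisfies the pointwise lower bound $\|g'\sigma\|\geq\delta$ on $\Theta$, which forces the cut-off to be placed so that the projection-based expression for $s$ stays inside its $C^4$ regime from Lemma~\ref{propconc}. Once $g$ is in hand, Theorem~\ref{Ito_new} delivers the stated Itô identity for each $k\in\{1,\dots,d\}$ on a single probability-one event, and taking the intersection over the finitely many $k$ finishes the corollary.
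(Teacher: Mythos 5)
Your proof is correct and follows essentially the same route as the paper: both apply Theorem~\ref{Ito_new} with $Y=X$, $\alpha_s=\mu(X_s)$, $\beta_s=\gamma_s=\sigma(X_s)$, $M=\Theta$ and $f=G_k$, so the comparison term vanishes and the Itô identity holds outright. The one point of difference is that you construct the auxiliary $C^2$-function $g$ required for condition \eqref{undgbd} by hand, while the paper simply cites Proposition~\ref{gprop}(ii),(iii), which already supplies such a $g$ (vanishing on $\Theta$ and with $\inf_{\Theta^\eps}\|g'\sigma\|>0$) under assumptions (A)(ii) and (A)(v); noticing that proposition would have replaced most of your argument with a single reference.
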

\begin{proof}
	We will use Theorem \ref{Ito_new} with $\alpha_{t} = \mu(X_{t})$,  $\beta_{t} = \sigma(X_{t}) = \gamma_{t}$. It is obvious from the linear growth property of $\mu$ and $\sigma$ that conditions \eqref{a1}, \eqref{a2} and \eqref{a3} are fulfilled. The required properties of the function $G$ follow from Proposition \ref{grep} i)-iii). Moreover by Proposition \ref{gprop} (iii) we obtain, that condition \eqref{undgbd} is fulfilled with $M = \Theta$. Thus, the assertion of the corollary follows.
\end{proof}

An important tool which is used in the proof of Theorem \ref{Ito_new} and which is also used later in this text is the following result. For a proof we refer to \cite[Proposition 3]{MGYR24}
\begin{Prop} \label{gprop}
	Let $\emptyset\neq\Gamma\subset\R^d$ be an orientable $C^2$-hypersurface of positive reach, let $\nor\colon\Gamma\to\R^d$ be a normal vector along $\Gamma$ and assume that there exists $\epsilon \in (0,\reach(\Gamma))$ such that $\sigma$ is bounded on $\Gamma^{\epsilon}$ and assume that
	\begin{equation} 
		\inf_{x\in \Gamma} \|\nor(x)^\top \sigma(x)\| > 0
	\end{equation}
	Then there exist $\eps\in (0,\reach(\Gamma))$ and a $C^2$-function $g\colon\R^d\to\R$ with the following properties. \\[-.3cm]
	\begin{itemize}
		\item[(i)] $\|g\|_\infty + 	\|g'\|_\infty + \|g''\|_\infty < \infty$. \\[-.3cm]
		\item[(ii)] For all $x\in \Gamma^\eps$ we have $\|g(x)\| \le d(x,\Gamma)$. \\[-.3cm]
		\item[(iii)] $\inf_{x\in \Gamma^\eps} \|g'(x)^\top \sigma(x)\| := c > 0$.  
	\end{itemize}
\end{Prop}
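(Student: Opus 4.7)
My plan is to take $g$ to be a smoothly cut-off signed distance function to $\Gamma$. Fix $\eta \in (0,\reach(\Gamma))$; on $\Gamma^\eta \subset \unp(\Gamma)$, orientability and positive reach let one write $x - \pr_\Gamma(x) = \lambda(x)\,\nor(\pr_\Gamma(x))$ for a unique $\lambda(x)\in\R$ with $|\lambda(x)|=d(x,\Gamma)$, and set
\[
\tilde g(x) := \langle x-\pr_\Gamma(x),\,\nor(\pr_\Gamma(x))\rangle.
\]
Standard tubular-neighborhood theory for $C^2$-hypersurfaces of positive reach (analogous to Lemma~\ref{propconc} at one order lower) gives that $\pr_\Gamma$ and $\nor\circ\pr_\Gamma$ are $C^1$ on some $\Gamma^{\eps_1}$; in local normal coordinates along $\Gamma$, $\tilde g$ is literally the signed normal coordinate, so it is $C^2$ on $\Gamma^{\eps_1}$ with $\tilde g, \tilde g', \tilde g''$ uniformly bounded there.

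Next, fix $\eps \in (0,\eps_1/4)$ (to be shrunk below) and a cutoff $\chi \in C^\infty(\R)$ with $\chi \equiv 1$ on $[-\eps,\eps]$, $\supp\chi \subset (-\eps_1/2,\eps_1/2)$, and bounded derivatives up to order two, and define
\[
g(x) := \begin{cases}\tilde g(x)\,\chi(\tilde g(x)), & x\in\Gamma^{\eps_1},\\ 0, & x\notin\Gamma^{\eps_1}.\end{cases}
\]
Since $\chi(\tilde g(\cdot))$ vanishes outside $\{|\tilde g|<\eps_1/2\}$, whose closure lies strictly inside $\Gamma^{\eps_1}$, $g$ extends by zero to a globally $C^2$-function on $\R^d$. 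The product rule together with the uniform bounds on $\tilde g,\tilde g',\tilde g''$ on the support of $\chi\circ\tilde g$ then yields (i). On $\Gamma^\eps$ we have $\chi\circ\tilde g\equiv 1$, so $g=\tilde g$ there and $|g(x)|=|\lambda(x)|=d(x,\Gamma)$, which gives (ii).

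For (iii), compute at $y\in\Gamma$: since $x-\pr_\Gamma(x)=0$ there, one term in the product rule vanishes and
\[
\tilde g'(y) \;=\; \nor(y)^\top\bigl(I_d-\pr_\Gamma'(y)\bigr).
\]
Since $\pr_\Gamma'(y)$ is the orthogonal projection onto $T_y\Gamma$, $I_d-\pr_\Gamma'(y)$ is projection onto $\R\nor(y)$, so $\tilde g'(y)=\nor(y)^\top$ and
\[
\|\tilde g'(y)^\top\sigma(y)\| = \|\nor(y)^\top\sigma(y)\| \ge c_0 := \inf_{z\in\Gamma}\|\nor(z)^\top\sigma(z)\| > 0.
\]
Continuity of $\tilde g'$ on $\Gamma^{\eps_1}$ combined with the ambient Lipschitz assumption on $\sigma$ and the identity $\pr_\Gamma(x)\in\Gamma$ yields, via a perturbation estimate in the spirit of Lemma~\ref{propimp}(iii), a radius $\eps'>0$ such that $\|\tilde g'(x)^\top\sigma(x)\|\ge c_0/2$ for all $x\in\Gamma^{\eps'}$. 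Shrinking $\eps$ to at most $\eps'$ forces $g=\tilde g$ on $\Gamma^\eps$ and yields (iii) with $c=c_0/2$.

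The main obstacle is establishing the $C^2$-regularity and the uniform bounds for the signed distance $\tilde g$ on a tubular neighborhood of the possibly non-compact hypersurface $\Gamma$, starting from only $C^2$-regularity of $\Gamma$; this relies on normal-coordinate parametrizations and uniform control of $\pr_\Gamma$ and $\nor\circ\pr_\Gamma$ derived from positive reach. Once that is in place, the rest reduces to product-rule bookkeeping around the cutoff and a standard continuity-based perturbation for the lower bound in (iii).
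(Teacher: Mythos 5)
Your construction — a smooth cutoff of the signed distance function $\tilde g(x)=\langle x-\pr_\Gamma(x),\nor(\pr_\Gamma(x))\rangle$ — is the natural choice and essentially matches the approach one expects behind the cited result from \cite{MGYR24}. However, you are underselling the single most useful fact about $\tilde g$: using the orthogonality relations from Lemma~\ref{projdist}(iii) and Lemma~\ref{normalreg0} (the range of $\pr_\Gamma'(x)$ lies in $T_{\pr_\Gamma(x)}\Gamma$, the range of $\nor'(\pr_\Gamma(x))\pr_\Gamma'(x)$ lies in $T_{\pr_\Gamma(x)}\Gamma$, and $x-\pr_\Gamma(x)\perp T_{\pr_\Gamma(x)}\Gamma$ by Lemma~\ref{fed0}), the product rule shows that $\tilde g'(x)=\nor(\pr_\Gamma(x))^\top$ identically on the whole tubular neighborhood, not merely at points of $\Gamma$. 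This identity does three things at once. First, it yields the claimed $C^2$-regularity cleanly: $\tilde g'$ is $C^1$ because $\nor\circ\pr_\Gamma$ is $C^1$ (gaining one order over the naive count), whereas your local-normal-coordinate argument, as written, only delivers $C^1$-regularity of $\tilde g$ from a $C^1$ tubular parametrization. Second, $\tilde g''=(\nor'(\pr_\Gamma)\pr_\Gamma')^\top$ reduces the uniform bound in (i) to uniform bounds on $\nor'$ and $\pr_\Gamma'$ over $\Gamma^{\eps_1}$, which do follow from the positive-reach assumption (Federer-type Lipschitz bounds on $\pr_\Gamma$, curvature bounded by $1/\reach(\Gamma)$) but are asserted rather than proved in your writeup; you correctly flag this as the main obstacle. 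Third, it makes (iii) immediate by the same perturbation as in Lemma~\ref{propimp}(iii): $\|\tilde g'(x)^\top\sigma(x)\|=\|\nor(\pr_\Gamma(x))^\top\sigma(x)\|\ge \|\nor(\pr_\Gamma(x))^\top\sigma(\pr_\Gamma(x))\|-L\,d(x,\Gamma)\ge c_0-L\eps$, with no appeal to ``continuity of $\tilde g'$'' (which for non-compact $\Gamma$ would not by itself give a uniform radius).

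One further point worth being explicit about: this last step, and your own, uses Lipschitz continuity of $\sigma$, which is \emph{not} among the hypotheses of the Proposition as stated (only boundedness of $\sigma$ on $\Gamma^\epsilon$ and the non-degeneracy $\inf_{x\in\Gamma}\|\nor(x)^\top\sigma(x)\|>0$ are assumed). It is Lipschitz under the paper's standing assumptions (B) and (D), which is the only setting where the proposition is invoked, so this is consistent with how the result is used, but you should note that you are relying on a hypothesis that is implicit in the paper's global assumptions rather than in the proposition itself. The cutoff bookkeeping in (i) and (ii) is fine.
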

\subsection{Properties of the coefficients} \label{propcoeff}

In the following, we assume that there are $C^{3}$-hypersurfaces $\Theta$ and $\Gamma$ of positive reach such that the coefficients $\mu$ and $\sigma$ fulfil the assumptions (C) and (D).
We define $\partial \mu$ in the same way as $\partial \sigma$, see \eqref{diffdef}.
We will make use of the following simple properties of the coefficients $\mu$ and $\sigma$ under assumptions (C) and (D).
\begin{Lem} \label{lingrowth}
	The following hold.
	\begin{itemize}
		\item[(i)] There exists $K \in (0,\infty)$ such that for all $x \in \R$ we have
		\[
		\norm{\mu(x)} + \norm{\sigma(x)} \leq K (1+\norm{x})
		\]
		\item[(ii)] The functions $\partial \mu$ and $\partial \sigma$ are bounded.
		\item[(iii)] There exists $c \in (0,\infty)$ such that the functions $\mu$ and $\sigma$ satisfy
		\begin{align} \label{Taylbdmu}
			\norm{ \mu(y) - \mu(x) - \partial\mu(x)(y-x)  } \leq c \norm{y-x}^{2} 
		\end{align}
		for all $x,y \in \R^{d}$ such that $\norm{x-y} < d(x,\Theta)$.
		\begin{align} \label{Taylbdsig}
			\norm{ \sigma_{j}(x) - \sigma_{j}(y) - \partial\sigma_{j}(x)(y-x) } \leq c \norm{y-x}^{2}
		\end{align}
		for all $x,y \in \R^{d}$ such that $\norm{x-y} < d(x,\Delta)$.
	\end{itemize}
\end{Lem}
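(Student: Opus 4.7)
For (i), I would read off linear growth directly from the global Lipschitz continuity in (D): $\|\mu(x)\|\le \|\mu(0)\|+L_\mu\|x\|$ and similarly for $\sigma$, so $K := \max(\|\mu(0)\|,\|\sigma(0)\|,L_\mu,L_\sigma)$ works.

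For (ii), I would use the standard fact that at any point of differentiability of a globally Lipschitz function, the operator norm of the derivative is bounded by the Lipschitz constant. Since $\R^d\setminus\Theta$ is open and $\mu$ is $C^1$ there by (C)(ii), at each $x\in\R^d\setminus\Theta$ a small open ball around $x$ lies in $\R^d\setminus\Theta$, on which $\mu$ is $C^1$ and Lipschitz with the global constant from (D); hence $\|\mu'(x)\|\le L_\mu$. The analogous argument on $\R^d\setminus\Delta$ handles $\sigma_j'$, and since $\partial\mu$ and $\partial\sigma_j$ are extended by $0$ on the exceptional sets, they are bounded on all of $\R^d$.

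For (iii), I would use an integral Taylor expansion along the line segment $\overline{x,y}$. Fix $x,y\in\R^d$ with $\|x-y\|<d(x,\Theta)$; this forces $x\in\R^d\setminus\Theta$ (otherwise $d(x,\Theta)=0$ and the inequality is impossible), and for every $z$ on the segment $d(z,\Theta)\ge d(x,\Theta)-\|z-x\|>0$, so $\overline{x,y}$ lies in the open ball $B_{d(x,\Theta)}(x)\subset\R^d\setminus\Theta$. Being convex, this ball is a subset on which the intrinsic metric agrees with the Euclidean distance, so Lemma \ref{lipimpl0}(ii) promotes the intrinsic Lipschitz property of $\mu'$ from (C)(iv) to ordinary Lipschitz continuity of $\mu'$ on the ball with the same constant $L$. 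The fundamental theorem of calculus then yields
\[
\mu(y)-\mu(x)-\mu'(x)(y-x) = \int_0^1 \bigl[\mu'(x+t(y-x))-\mu'(x)\bigr](y-x)\,dt,
\]
and estimating the integrand by the Lipschitz bound produces $\|\mu(y)-\mu(x)-\mu'(x)(y-x)\|\le \tfrac{L}{2}\|y-x\|^2$, which is exactly \eqref{Taylbdmu} since $\partial\mu(x)=\mu'(x)$ on $\R^d\setminus\Theta$; repeating the argument with $\Delta$, $\sigma_j$, and $\sigma_j'$ in place of $\Theta$, $\mu$, and $\mu'$ gives \eqref{Taylbdsig}. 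No substantive obstacle is anticipated: all three parts assemble from standard facts (Lipschitz implies linear growth, bounded difference quotients imply bounded partial derivatives, intrinsic equals ordinary Lipschitz on convex subsets, and integral Taylor), and the only care needed is to verify that the straight segment from $x$ to $y$ remains in the domain of differentiability, which is immediate from the distance hypothesis.
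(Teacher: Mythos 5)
Your proposal is correct and follows essentially the same route as the paper: part (iii) is proven in the paper by the same integral Taylor argument along the segment $\overline{x,y}\subset\R^d\setminus\Theta$, promoting intrinsic Lipschitz continuity of the derivative to ordinary Lipschitz continuity on the segment, while parts (i) and (ii) are dispatched in the paper by reference to [MGYR24, Lemma 1] and by the same observation that a globally Lipschitz $C^1$ function has bounded derivative. You simply spell out the short arguments that the paper leaves to citation.
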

\begin{proof}
	For part i) we refer to \cite[Lemma 1]{MGYR24}. For part (ii) we note, that the boundedness of $\partial \sigma$ and $\partial \mu$ follows immediately from (C)(ii) and (D). Now for part iii). We only prove \eqref{Taylbdmu}. Let $x,y \in \R^{d}$ such that $d(x,\Theta) > \norm{x-y}$ and let $j \in \{1,\dots,d \}$. Then we have $\overline{x,y} \subseteq \R^{d}\setminus \Theta$ and thus (C)(iv) yields that $(\mu'_{j})_{\vert \overline{x,y}}$ is Lipschitz continuous with constant $c \in (0,\infty)$ which does not depend on $x$ or $y$.
	Moreover the function $g\colon [0,1] \to \R, t \mapsto \mu_{j}(x+t(y-x))$ is $C^{1}$ due to condition (C)(ii). Thus we obtain
	\begin{align*}
		&\mu_{j}(x) - \mu_{j}(y) = g(0) - g(1) = \int_{0}^{1} g'(t) dt = \int_{0}^{1} \mu_{j}'(x+ t(y-x))(y-x) dt \\
		& = \int_{0}^{1} (\mu_{j}'(x+ t(y-x)) - \mu_{j}'(x))(y-x) + \mu_{j}'(x)(y-x) dt \\
		&\leq c \norm{y-x }^{2} + \mu_{j}'(x)(y-x)
	\end{align*}
\end{proof}

\subsection{Moment estimates and occupation time estimates of the Milstein scheme} \label{occtimechap}
We next present some properties of the Milstein-Scheme under assumptions (C) and (D). 

For technical reasons, for $x \in \R^{d}$ let $X^{x}$ be a strong solution of the SDE
\begin{equation}\label{sde00}
	\begin{aligned}
		dX^x_t & = \mu(X^x_t) \, dt + \sigma(X^x_t) \, dW_t, \quad t\in [0,1],\\
		X^x_0 & = x.
	\end{aligned}
\end{equation}
and let $\hat{X}_{n}^{x}$ be the time continuous Milstein scheme associated with \eqref{sde00}.
We note, that for $t \in [0,1]$ the scheme $\hat{X}_{n,t}^{x}$ has the representation
\begin{equation} \label{intrep}
	\hat{X}_{n,t}^{x} = x + \int_{0}^{t} \mu(\hat{X}_{n,\underline{s}_{n}}^{x}) ds + \int_{0}^{t} \sigma(\hat{X}_{n,\underline{s}_{n}}^{x}) + A_{\sigma}(\hat{X}_{n,\underline{s}_{n}}^{x},W_{s}-W_{\underline{s}_{n}}) dW_{s}
\end{equation}
where 
\[
A_{\sigma} \colon \R^{d} \times \R^{d} \to \R^{d \times d}, \, (x,y) \mapsto \begin{pmatrix}
	(\partial\sigma_{1} \sigma)(x) y &\cdots& (\partial\sigma_{d} \sigma)(x) y
\end{pmatrix}
\]
We then have the following, which is obtained from Lemma \ref{lingrowth}(i) using standard arguments, compare \cite[Lemma 15]{MGYR24}.
\begin{Lem}\label{holder}
	For every $p\in[1, \infty)$ there exists  $c\in(0, \infty)$ such that for all $x\in\R^d$, all $n\in\N$, all $\delta\in[0,1]$ and all $t\in[0, 1-\delta]$,
	\[
	\bigl(\EE\bigl[\sup_{s\in[t, t+\delta]} \|\eul_{n,s}^x-\eul_{n,t}^x\|^p\bigr]\bigr)^{1/p}\leq c (1+\|x\|) \sqrt{\delta}.
	\]
	In particular,
	\[
	\sup_{n\in\N} \bigl(\EE\bigl[\|\eul_{n}^x\|_\infty^p\bigr]\bigr)^{1/p}\leq c (1+\|x\|).
	\]
	Furthermore for all $\epsilon \in (0,\infty)$ there exists a constant $c \in (0,\infty)$ such that for all $n \in \N$ we have
	\[
	\bigl(\EE\bigl[\sup_{s\in[0,1]} \|\eul_{n,s}^x-\eul_{n,\underline{s}_{n}}^x\|^p\bigr]\bigr)^{1/p}\leq c \; \frac{1}{n^{\frac{1}{2} - \epsilon}}.
	\]
\end{Lem}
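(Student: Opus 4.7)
The strategy follows the standard template for moment estimates of It\^o-type schemes, applied to the integral representation \eqref{intrep} of $\eul_n^x$. The only novelty relative to the Euler analysis in \cite[Lemma 15]{MGYR24} is the Milstein correction term $A_\sigma(\eul_{n,\underline{s}_n}^x, W_s - W_{\underline{s}_n})$ sitting inside the stochastic integral. Because $\partial\sigma$ is bounded by Lemma~\ref{lingrowth}(ii) and $\sigma$ has linear growth by Lemma~\ref{lingrowth}(i), one has $\|A_\sigma(x,y)\| \le c(1+\|x\|)\|y\|$, so this extra term is harmless: for $s\in[\underline s_n, \underline s_n + 1/n]$ it contributes $L^p$-norm of order $(1+\|\eul_{n,\underline s_n}^x\|)/\sqrt{n}$ and can be absorbed into the diffusion estimate.

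For the first inequality I would proceed by bootstrapping. First, prove an a priori uniform bound $\sup_{n\in\N}\sup_{s\in[0,1]} \EE[\|\eul_{n,s}^x\|^p] \le c(1+\|x\|)^p$ by applying the $L^p$-BDG and H\"older inequalities to \eqref{intrep} on $[0,t]$, using the linear growth of $\mu$, $\sigma$ and the bound on $\|A_\sigma\|$ just mentioned, and concluding by Gronwall's inequality. With this in hand, applying BDG and H\"older on the interval $[t, t+\delta]$ yields the three contributions
\[
c\,\delta\,\bigl(1+\|x\|\bigr),\qquad c\sqrt{\delta}\,\bigl(1+\|x\|\bigr),\qquad c\,\delta\,\bigl(1+\|x\|\bigr),
\]
coming from the drift, the $\sigma$-part of the diffusion, and the $A_\sigma$-part respectively; since $\delta\le\sqrt{\delta}$ on $[0,1]$, the sum is $\le c(1+\|x\|)\sqrt{\delta}$, which yields the first estimate. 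The ``in particular'' inequality is then just the special case $t=0$, $\delta=1$ combined with the triangle inequality $\|\eul_{n,s}^x\|\le \|x\|+\|\eul_{n,s}^x-x\|$.

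For the third inequality, the key observation is that
\[
\sup_{s\in[0,1]} \|\eul_{n,s}^x - \eul_{n,\underline{s}_n}^x\|^q
= \max_{0\le i<n} \sup_{s\in[i/n,(i+1)/n]} \|\eul_{n,s}^x - \eul_{n,i/n}^x\|^q.
\]
Bounding the maximum by the sum, taking expectations, and invoking the first inequality with parameter $q$ and $\delta = 1/n$ gives $\EE\bigl[\sup_{s\in[0,1]} \|\eul_{n,s}^x - \eul_{n,\underline{s}_n}^x\|^q\bigr] \le n\cdot c^q(1+\|x\|)^q n^{-q/2}$, so the $L^q$-norm is of order $(1+\|x\|)\,n^{-(1/2 - 1/q)}$. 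Given $\epsilon>0$ and $p\ge 1$, choose $q=\max(p,\lceil 1/\epsilon\rceil)$; then $L^p\subset L^q$ via Jensen and $1/q<\epsilon$, yielding the claimed rate $n^{-(1/2-\epsilon)}$. The only subtlety is this last step, since naively working directly at exponent $p$ would give only $n^{-(1/2-1/p)}$ and lose the $\epsilon$-uniformity; passing through a higher moment is what produces arbitrary proximity to $1/2$.
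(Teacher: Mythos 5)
Your proof follows the standard route the paper implicitly invokes (it simply defers to ``standard arguments, compare~[Lemma~15]{MGYR24}''), and the moment-boosting trick for the third estimate (prove the bound at a high exponent $q$ to get rate $n^{-(1/2-1/q)}$, then pass to $L^p$ via Lyapunov/Jensen) is exactly the expected mechanism for producing the $\epsilon$-loss rate. One small imprecision: after applying BDG, the $A_\sigma$-contribution to the increment over $[t,t+\delta]$ is of size $(1+\|x\|)\sqrt{\delta/n}$ (it sits inside the stochastic integral, so the time interval contributes a square root, and the Brownian-increment factor contributes $n^{-1/2}$), not $(1+\|x\|)\delta$ as you wrote; since both are $\le (1+\|x\|)\sqrt{\delta}$ for $\delta\in[0,1]$ and $n\ge 1$, the conclusion is unaffected.
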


The time continuous Milstein Scheme also fulfils the following Markov-Property.
\begin{Lem}\label{markov}
	Let $x\in\R^d$, $n\in\N$, $j\in\{0, \ldots, n-1\}$ and $f\colon C([j/n,1];\R^d)\to \R$ be measurable and bounded. Then
	\[
	\EE\bigl[f\bigl((\eul_{n,t}^x)_{t\in [j/n, 1]}\bigr)\bigr | \mathcal F_{j/n}\bigr] =\EE\bigl[f\bigl((\eul_{n,t}^x)_{t\in [j/n, 1]}\bigr)\bigr | \eul_{n,j/n}^x\bigr]\quad \PP\text{-a.s.},
	\]
	and for $\PP ^{\eul_{n,j/n}^x} $-almost all $y\in\R^d$,
	\[
	\EE\bigl[f \bigl( (\eul_{n,t}^x)_{t\in [j/n, 1]} \bigr) \bigr |\eul_{n,j/n}^x=y\bigr] =\EE\bigl[f\bigl( (\eul^y_{n,t-j/n})_{t\in [j/n,1]}\bigr)\bigr].
	\]
\end{Lem}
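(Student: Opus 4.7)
The plan is to exhibit $(\eul_{n,t}^x)_{t\in[j/n,1]}$ as a deterministic measurable functional of the $\mathcal F_{j/n}$-measurable random variable $\eul_{n,j/n}^x$ and the increments of $W$ after time $j/n$, and then invoke the standard factorization lemma for conditional expectations together with stationarity of Brownian increments.

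First, for $k\in\{j,\dots,n-1\}$ and $t\in(k/n,(k+1)/n]$, I unroll the recursion~\eqref{MilSc}: starting from $\eul_{n,j/n}^x$ and using the definition of the Milstein scheme, there is a Borel-measurable map
\[
\Phi_{j,n}\colon \R^d\times C([0,1-j/n];\R^d)\to C([j/n,1];\R^d)
\]
such that
\[
\bigl(\eul_{n,t}^x\bigr)_{t\in[j/n,1]}=\Phi_{j,n}\Bigl(\eul_{n,j/n}^x,\,\bigl(W_{j/n+s}-W_{j/n}\bigr)_{s\in[0,1-j/n]}\Bigr).
\]
Indeed the iterated integrals $J^n_{j_1,j_2}(t)$ appearing in~\eqref{MilSc} for $t\ge j/n$ are measurable functionals of $(W_u-W_{j/n})_{u\in[j/n,1]}$ only, since the inner integrand depends on increments of $W$ after time $\underline t_n\ge j/n$. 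The measurability of $\Phi_{j,n}$ follows from the continuity of $\mu,\sigma,\partial\sigma$ (pre-composed with the Milstein recursion, which is piecewise a continuous functional of the driving path) on each subinterval, together with the standard measurability of stochastic integrals as functionals of their integrators.

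Next I use the two basic ingredients: $\eul_{n,j/n}^x$ is $\mathcal F_{j/n}$-measurable, while $(W_{j/n+s}-W_{j/n})_{s\in[0,1-j/n]}$ is a Brownian motion independent of $\mathcal F_{j/n}$. Applying the factorization lemma (e.g.\ Kallenberg, \emph{Foundations of Modern Probability}, Thm.~6.4) to $f\circ\Phi_{j,n}$, I obtain
\[
\EE\bigl[f\bigl((\eul_{n,t}^x)_{t\in[j/n,1]}\bigr)\,\big|\,\mathcal F_{j/n}\bigr]
=h\bigl(\eul_{n,j/n}^x\bigr)\quad\PP\text{-a.s.},
\]
where $h(y)=\EE\bigl[f\bigl(\Phi_{j,n}(y,(W_{j/n+s}-W_{j/n})_{s\in[0,1-j/n]})\bigr)\bigr]$. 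Taking conditional expectation given $\eul_{n,j/n}^x$ and using $\sigma(\eul_{n,j/n}^x)\subset\mathcal F_{j/n}$ yields the first equality.

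Finally, for the second equality I identify $h(y)$. By the increment stationarity of Brownian motion, the process $\widetilde W_s:=W_{j/n+s}-W_{j/n}$, $s\in[0,1-j/n]$, has the same law as $(W_s)_{s\in[0,1-j/n]}$. A direct comparison of the Milstein recursion~\eqref{MilSc} starting at time $0$ from $y$ and driven by $W$, with $\Phi_{j,n}(y,\widetilde W)$, shows that these two $C([0,1-j/n];\R^d)$- (respectively $C([j/n,1];\R^d)$-) valued random variables are related by the deterministic time shift $t\mapsto t-j/n$ when the driving path is replaced by $\widetilde W$; this uses nothing beyond the autonomous, time-homogeneous form of~\eqref{MilSc} and the fact that $\underline{(j/n+s)}_n=j/n+\underline s_n$. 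Hence, writing $\tau_j$ for the time-shift map from $C([0,1-j/n];\R^d)$ to $C([j/n,1];\R^d)$,
\[
h(y)=\EE\!\left[f\!\left(\tau_j\!\left(\eul^y_{n,\cdot}\big|_{[0,1-j/n]}\right)\right)\right]=\EE\bigl[f\bigl((\eul^y_{n,t-j/n})_{t\in[j/n,1]}\bigr)\bigr],
\]
which is the claimed identity for $\PP^{\eul_{n,j/n}^x}$-a.e.\ $y$. The only mildly delicate step is the explicit construction of $\Phi_{j,n}$ and the verification that the iterated-integral term depends only on post-$j/n$ increments; once this is in place the remainder is the standard Markov-property argument for Euler/Milstein schemes.
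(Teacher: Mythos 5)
Your overall strategy matches the paper's: exhibit $(\eul_{n,t}^x)_{t\in[j/n,1]}$ as a measurable functional of $\eul_{n,j/n}^x$ and the post-$j/n$ Brownian increments, then use independence/factorization for the first identity and stationarity plus time-homogeneity for the second. That skeleton is sound, and the paper does exactly the same thing.

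However, there is a genuine gap where you handle the iterated integrals $J^n_{j_1,j_2}(t)$. You write that the measurability of $\Phi_{j,n}$ ``follows from the continuity of $\mu,\sigma,\partial\sigma$ ... together with the standard measurability of stochastic integrals as functionals of their integrators.'' This does not actually close the argument. For $j_1\ne j_2$ the iterated integral $\int_{\underline t_n}^t(W_{j_1,s}-W_{j_1,\underline t_n})\,dW_{j_2,s}$ (a L\'evy-area type quantity) is \emph{not} a pathwise-continuous functional of $W$, so continuity buys you nothing here. What is standard is only that $J^n_{j_1,j_2}(t)$ is measurable with respect to $\sigma(W_u-W_{j/n}\colon u\in[j/n,1])$; but for your argument — in particular for the second identity, where you compare $\Phi_{j,n}(y,\widetilde W)$ with the scheme started at time $0$ from $y$ and driven by $W$ — you need a \emph{single} explicit functional that represents the iterated integral and is invariant under the grid shifts $S\in\{0,1/n,\dots\}$ (so that ``the same $\Phi_{j,n}$'' applies to both $\widetilde W$ and $W$). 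That shift-invariance is not automatic from mere measurability. This is precisely the ``additional difficulty'' the paper flags relative to the Euler case, and it is handled by Lemma~\ref{replem}: the iterated integral is recognized as a component of the strong solution of an auxiliary SDE with Lipschitz coefficients, and Kallenberg's theorem (representing strong solutions as a single universal measurable functional of the driving Brownian motion) supplies the shift-invariant functional $\Psi$. Your proof needs this (or an equivalent argument) inserted where you currently appeal to ``standard measurability''; otherwise the shift comparison in your last step is not justified.
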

This Lemma is formulated in the same way as Lemma 16 in \cite{MGYR24}, however the presence of iterated Itô integrals in the Milstein-scheme poses some additional difficulty here. 
In order to prove the Lemma, the following technical result is needed, which will be useful throughout the text.
\begin{Lem}\label{replem}
Let $T \in [0,1]$, $j_{1},j_{2} \in \{1,\dots,d\}$ and $n \in \N$. Then there exists a function $\Psi \colon C([0,T],\R^{d}) \to C([0,T],\R^{d})$ such that $\prob$-a.s. for all $S \in \{0,\frac{1}{n},\frac{2}{n},\dots, \underline{1-T}_{n} \}$ we have
\[
\Psi((W_{t+S} - W_{S})_{t \in [0,T]}) = \Bigl( \int_{S}^{S + t} W_{s}^{j_{1}} - W_{\underline{s}_{n}}^{j_{1}} dW_{s}^{j_{2}} \Bigr)_{t \in [0,T]}.
\]
\end{Lem}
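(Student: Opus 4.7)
The plan has two steps: first, use that $S$ lies on the $1/n$-grid to reduce the problem to a standard iterated Itô integral driven by a time-shifted Brownian motion; second, construct the Borel-measurable functional by realising this iterated integral as an almost sure uniform limit of Riemann sums.

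Since $S = k/n$ for some $k \in \N_0$, for every $s \in [S, S+T]$ we have $\underline{s}_n = S + \underline{s-S}_n$. Putting $\tilde W^S_u := W_{u+S} - W_S$, which is an $\R^d$-valued Brownian motion since $S$ is deterministic, the change of variables $u = s-S$ combined with the standard shift rule for Itô integrals yields $\prob$-almost surely
\begin{equation*}
\int_S^{S+t} \bigl(W^{j_1}_s - W^{j_1}_{\underline{s}_n}\bigr) \, dW^{j_2}_s \;=\; \int_0^t \bigl(\tilde W^{S,j_1}_u - \tilde W^{S,j_1}_{\underline{u}_n}\bigr) \, d\tilde W^{S,j_2}_u.
\end{equation*}
Hence it suffices to construct one Borel-measurable map $\Psi \colon C([0,T], \R^d) \to C([0,T], \R^d)$ such that, for every $d$-dimensional Brownian motion $B$, the path $\Psi(B)$ has, say, first coordinate almost surely equal to $I^B(t) := \int_0^t (B^{j_1}_u - B^{j_1}_{\underline{u}_n}) \, dB^{j_2}_u$ and all other coordinates identically zero.

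To construct $\Psi$, for $K \in \N$ let $\tau^K_i := iT/2^K$ and, for $\omega \in C([0,T], \R^d)$ and $t \in [\tau^K_i, \tau^K_{i+1}]$, set
\begin{equation*}
S_K(\omega)(t) := \sum_{\ell=0}^{i-1} \bigl(\omega^{j_1}(\tau^K_\ell) - \omega^{j_1}(\lfloor n\tau^K_\ell\rfloor/n)\bigr)\bigl(\omega^{j_2}(\tau^K_{\ell+1}) - \omega^{j_2}(\tau^K_\ell)\bigr) + \bigl(\omega^{j_1}(\tau^K_i) - \omega^{j_1}(\lfloor n\tau^K_i\rfloor/n)\bigr)\bigl(\omega^{j_2}(t) - \omega^{j_2}(\tau^K_i)\bigr).
\end{equation*}
Then $S_K \colon C([0,T], \R^d) \to C([0,T], \R)$ is continuous, hence Borel. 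Since the integrand $u \mapsto B^{j_1}_u - B^{j_1}_{\underline{u}_n}$ is adapted, c\`adl\`ag and $L^2$-bounded, Itô's isometry combined with Doob's maximal inequality applied to the continuous martingale $I^B - S_K(B) = \int_0^{\cdot} (H - H^K)_u \, dB^{j_2}_u$ (with $H^K$ the left-point discretisation) yields $S_K(B) \to I^B$ in $L^2(\Omega; C([0,T], \R))$ as $K \to \infty$. A Borel-Cantelli argument along a fast-decaying subsequence $(K_m)$ produces almost sure uniform convergence on $[0,T]$. Letting $\mathcal{E}$ be the Borel set of paths $\omega \in C([0,T], \R^d)$ on which $(S_{K_m}(\omega))_m$ is Cauchy in $C([0,T], \R)$, I define the first coordinate of $\Psi(\omega)$ to equal $\lim_m S_{K_m}(\omega)$ on $\mathcal{E}$ and zero off $\mathcal{E}$, with all other coordinates identically zero. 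Then $\Psi$ is Borel measurable, and by construction $\Psi(\tilde W^S) = I^{\tilde W^S}$ almost surely for each admissible grid point $S$. Since the grid $\{0, 1/n, \ldots, \underline{1-T}_n\}$ is finite, intersecting these full-measure events gives the desired simultaneous identity.

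The only real hurdle is the standard bookkeeping required to upgrade the $L^2$-convergence of the Riemann sums to almost sure uniform convergence along a subsequence, so that $\Psi$ can be declared a bona fide Borel map on the full path space rather than merely an almost-sure equivalence class. This is purely technical and independent of the regularity of $\mu$ or $\sigma$.
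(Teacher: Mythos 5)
Your proof is correct, but it follows a genuinely different route than the paper's. The reduction step is the same: both you and the paper exploit the fact that $S$ lies on the $1/n$-grid to write $\underline{s}_n = S + \underline{s-S}_n$ and pass from $\int_S^{S+t}(\cdot)\,dW_s$ to an integral driven by the time-shifted Brownian motion $\widetilde W^S = (W_{\cdot+S}-W_S)$. Where you diverge is the construction of the universal Borel map $\Psi$. The paper splits the integrand into $W^{j_1}_s$ and $W^{j_1}_{\underline{s}_n}$: the first piece gives the iterated It\^o integral $\int_0^t B^{j_1}_s\,dB^{j_2}_s$, which the paper realises as the second component of the strong solution of an explicit two-dimensional auxiliary SDE with Lipschitz coefficients, and then invokes Kallenberg's Theorem 1 (\cite{Ka96}) on the existence of a universal functional representation of strong solutions; the second piece $\int_0^t B^{j_1}_{\underline{s}_n}\,dB^{j_2}_s$ collapses to a finite sum of point evaluations and hence is directly a continuous functional of the path. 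You instead handle the full integrand $u\mapsto B^{j_1}_u - B^{j_1}_{\underline u_n}$ at once: you approximate the It\^o integral by explicit continuous Riemann-sum functionals $S_K$ on path space, prove $L^2(\Omega;C([0,T]))$-convergence via It\^o isometry plus Doob's maximal inequality (using that $\underline{\cdot}_n$ is continuous Lebesgue-a.e.\ so the left-point discretisation converges pointwise a.e.), upgrade to almost sure uniform convergence along a fast subsequence by Borel--Cantelli, and declare $\Psi$ to be the pointwise limit on the Borel set where the subsequence is Cauchy, extended by zero elsewhere. Both proofs then finish by noting the Wiener-measure-full set where $\Psi$ agrees with the integral is the same for each of the finitely many admissible shifts $S$. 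What each approach buys: the paper's proof is shorter on the page but leans on a fairly deep black-box theorem about universal functional solutions of SDEs; yours is longer but entirely self-contained, requiring only It\^o isometry, Doob, and Borel--Cantelli, and it avoids having to introduce and check an auxiliary SDE.
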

\begin{proof}
Note that for a $d$-dimensional Brownian Motion $B$ wrt. a filtration $\mathcal{H}$, the process $\Bigl( \int_{0}^{t} B_{s}^{j_{1}} dB_{s}^{j_{2}} \Bigr)_{t \in [0,T]}$ is the second component of the strong solution of the SDE
\begin{align} \label{helpSDE}
	\begin{split}
		&X_{0} = 0;\\
		&dX_{t} = \eta(X_{t}) dB_{t}, \; t \in [0,T]
	\end{split}
\end{align}
Where 
\[
\eta \colon \R^{2} \to \R^{2 \times d}, x \mapsto
\begin{pmatrix}
	0 & \dots & 0 & 1 & 0 & \dots & \dots & 0 \\
	0 & \dots & \dots & 0 & x_{1} & 0 & \dots & 0
\end{pmatrix}
\]
and where the $1$ in the first row is in the $j_{1}$-th column and the $x_{1}$ in the second row is in the $j_{2}$-th column.
Now according to Theorem 1 in \cite{Ka96} there exists a function $\Phi \colon C([0,T],\R^{d}) \to C([0,T],\R^{2})$ such that for every Brownian Motion  $B = (B_{t})_{t \in [0,T]}$ wrt. a filtration $\mathcal{H}$
the process $\Phi(B)$ is strong solution of \eqref{helpSDE} wrt. $B$ and $\mathcal{H}$.
Moreover, we have $\prob$-a.s. for all $t \in [0,T]$
\[
\int_{0}^{t} B_{\underline{s}_{n}}^{j_{1}} dB_{s}^{j_{2}} = \sum_{k = 0}^{n-1} B_{\frac{k}{n} \land t}^{j_{1}} \Bigl( B_{\frac{k+1}{n} \land t}^{j_{2}} - B_{\frac{k}{n} \land t}^{j_{2}} \Bigr)
\]
Now keep in mind, that $\prob$-a.s. for $t \in [0,T]$ we have 
\begin{align*}
	\int_{S}^{S + t} W_{s}^{j_{1}}-W_{\underline{s}_{n}}^{j_{1}} dW_{s}^{j_{2}} &= \int_{0}^{t} (W_{S+s}^{j_{1}} - W_{S}^{j_{1}}) - (W_{\underline{S + s}_{n}}^{j_{1}} - W_{S}^{j_{1}}) d(W_{S+s}^{j_{2}} - W_{S}^{j_{2}}) \\
	&= \int_{0}^{t} \widetilde{W}_{s}^{j_{1}} d\widetilde{W}_{s}^{j_{2}} - \int_{0}^{t} \widetilde{W}_{\underline{s}_{n}}^{j_{1}} d\widetilde{W}_{s}^{j_{2}}
\end{align*}
where $\widetilde{W} = (W_{s + S} - W_{S})_{s \in [0,S]}$ is a Brownian Motion wrt. the Filtration $(\F_{t + S})_{t \in [0,S]}$.
\end{proof}
\begin{proof}[Proof of Lemma \ref{markov}]
	This result follows immediately once it has been proven, that for all $l \in \{0,1,\dots, n-1 \}$ there exists a measurable function 
	\[
	\psi \colon \R^{d} \times C\bigl([0,\frac{l}{n}],\R^{d}\bigr) \to C\bigl([0,\frac{l}{n}],\R^{d}\bigr)
	\]
	such that for $x \in \R^{d}$ and $i \in \{0,\dots,n-l \}$ we have $\prob$-a.s.
	\[
	(\hat{X}_{n,t+\frac{i}{n}}^{x})_{t \in [0,\frac{l}{n}]} = \psi\Bigl(\hat{X}_{n,\frac{i}{n}}^{x},(W_{t+\frac{i}{n}}- W_{\frac{i}{n}})_{t \in [0,\frac{l}{n}]}\Bigr),
	\]
	Note first, that there exists a measurable function 
	\[
	\psi \colon \R^{d} \times \Bigl(C\bigl([0,\frac{l}{n}],\R^{d}\bigr)\Bigr)^{d+1} \to C\bigl([0,\frac{l}{n}],\R^{d}\bigr)
	\]
	such that for $x \in \R^{d}$ and $i \in \{0,\dots,n-l \}$ we have 
	\[
	(\hat{X}_{n,t+\frac{i}{n}}^{x})_{t \in [0,\frac{l}{n}]} = \psi\Bigl(\hat{X}_{n,\frac{i}{n}}^{x},(W_{t+\frac{i}{n}}- W_{\frac{i}{n}})_{t \in [0,\frac{l}{n}]},\Bigl(\Bigl(\int_{\frac{i}{n}}^{\frac{i}{n}+t} W_{s} - W_{\underline{s}_{n}} dW_{s}^{j}\Bigr)_{t \in [0,\frac{l}{n}]}\Bigr)_{j = 1,\dots,d}\Bigr),
	\]
	Let $j_{1},j_{2} \in \{1,\dots,d \}$. The claim now follows from the fact that there exists $\Psi \colon C([0,\frac{l}{n}],\R^{d}) \to C([0,\frac{l}{n}],\R^{d})$ such that for all $i \in \{ 0,\dots,n-l \}$ we have $\prob$-a.s.
	\[
	\Psi((W_{t+\frac{i}{n}} - W_{\frac{i}{n}})_{t \in [0,\frac{l}{n}]}) = \Bigl( \int_{\frac{i}{n}}^{\frac{i}{n} + t} W_{s}^{j_{1}} - W_{\underline{s}_{n}}^{j_{1}} dW_{s}^{j_{2}} \Bigr)_{t \in [0,\frac{l}{n}]}
	\]
	according to Lemma \ref{replem}.
\end{proof}

Next, we provide an estimate for the expected occupation time  of a neighborhood of the hypersurface $\Theta$ by the time-continuous Milstein scheme $\eul_{n}^x$.

\begin{Lem} \label{occtime}
	For all $q \in (1,\infty)$ there exists $c \in (0,\infty)$ such that for all $x \in \R^{d}$, all $n \in \N$, $\Gamma \in \{\Theta,\Delta \}$ and all $\tilde{\epsilon} \in [0,\infty)$, 
	\begin{equation*}
		\int_{0}^{1} \PP(\{\widehat{X}_{n,t}^{x} \in \Gamma^{\tilde{\epsilon}} \} ) dt \leq c(1+\|x\|^{2})\Bigl(\tilde{\epsilon} + \frac{1}{\sqrt{n}}\Bigr)
	\end{equation*}
\end{Lem}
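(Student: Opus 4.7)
The plan is to reduce the multi-dimensional estimate to a one-dimensional occupation-time bound by composing the scheme with the auxiliary function $g$ supplied by Proposition~\ref{gprop}. Applying that proposition to $\Gamma$ (its hypotheses follow from Assumptions~(C)(i) and (C)(iii)) yields $\eps_0 \in (0,\reach(\Gamma))$, a constant $c_0 > 0$, and a $C^2$ function $g \colon \R^d \to \R$ with bounded $g, g', g''$, satisfying $|g(x)| \leq d(x,\Gamma)$ on $\Gamma^{\eps_0}$ and $\|g'(x)^\top \sigma(x)\| \geq c_0$ for every $x \in \Gamma^{\eps_0}$. The claim is trivial for $\tilde\eps \geq \eps_0$ since the left-hand side is at most $1$; assume henceforth $\tilde\eps < \eps_0$.

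Next I would apply It\^o's formula to $Y_t := g(\eul_{n,t}^x)$ via the integral representation~\eqref{intrep}, obtaining the one-dimensional semimartingale $Y_t = g(x) + \int_0^t b_s\, ds + \int_0^t a_s\, dW_s$, with
\[
a_s = g'(\eul_{n,s}^x) \bigl[\sigma(\eul_{n,\underline{s}_n}^x) + A_\sigma(\eul_{n,\underline{s}_n}^x, W_s - W_{\underline{s}_n})\bigr]
\]
and $b_s$ the corresponding drift involving $g''$. The boundedness of $g, g', g''$, the linear growth of $\mu, \sigma$ (Lemma~\ref{lingrowth}(i)), the boundedness of $\partial\sigma$ (Lemma~\ref{lingrowth}(ii)), together with the moment bounds of Lemma~\ref{holder}, give $\int_0^1 \EE[|b_s|]\, ds \leq C(1+\|x\|^2)$. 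Taking expectations in Tanaka's formula for the local time $L_1^a(Y)$ and using the boundedness of $g$ then yields $\sup_{a \in \R} \EE[L_1^a(Y)] \leq C(1+\|x\|^2)$, and hence the occupation-times formula produces
\[
\EE\Bigl[\int_0^1 \one_{\{|Y_s| \leq \tilde\eps\}}\, \|a_s\|^2 \, ds\Bigr] \;=\; \int_{-\tilde\eps}^{\tilde\eps} \EE[L_1^a(Y)]\, da \;\leq\; 2C(1+\|x\|^2)\tilde\eps.
\]

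Finally I would introduce the good set, parametrised by a small deterministic radius $\eta \in (0, \eps_0/2)$ to be chosen below,
\[
G_s := \{\eul_{n,\underline{s}_n}^x \in \Gamma^{\eps_0 - \eta}\} \cap \{\|\eul_{n,s}^x - \eul_{n,\underline{s}_n}^x\| \leq \eta\} \cap \{\|W_s - W_{\underline{s}_n}\| \leq \eta\}.
\]
On $G_s$, the Lipschitz continuity of $\sigma$, its boundedness on $\Gamma^{\eps_0}$ (Assumption~(C)(iii)) and the boundedness of $\partial\sigma$ (Lemma~\ref{lingrowth}(ii)) yield $\|a_s\| \geq c_0 - K\eta$ for a constant $K$ depending only on $\|g'\|_\infty$ and the bounds on $\sigma, \partial\sigma$ on $\Gamma^{\eps_0}$; choosing $\eta$ so small that $K\eta \leq c_0/2$ gives $\|a_s\|^2 \geq c_0^2/4$ on $G_s$. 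Moreover $\eul_{n,s}^x \in \Gamma^{\eps_0}$ on $G_s$ by the triangle inequality, so $\{\eul_{n,s}^x \in \Gamma^{\tilde\eps}\} \cap G_s \subseteq \{|Y_s| \leq \tilde\eps\} \cap G_s$ because $|g| \leq d(\cdot,\Gamma)$ on $\Gamma^{\eps_0}$. Combining with the previous display produces
\[
\int_0^1 \PP\bigl(\{\eul_{n,s}^x \in \Gamma^{\tilde\eps}\} \cap G_s\bigr)\, ds \;\leq\; \frac{4}{c_0^2}\,\EE\Bigl[\int_0^1 \one_{\{|Y_s|\leq\tilde\eps\}}\, \|a_s\|^2\, ds\Bigr] \;\leq\; C'(1+\|x\|^2)\tilde\eps.
\]
On the complement $G_s^c$, any $s$ for which $\eul_{n,s}^x \in \Gamma^{\tilde\eps}$ must satisfy at least one of $\|\eul_{n,s}^x - \eul_{n,\underline{s}_n}^x\| > \eta$, $\|W_s - W_{\underline{s}_n}\| > \eta$, or $\|\eul_{n,s}^x - \eul_{n,\underline{s}_n}^x\| > \eps_0 - \eta - \tilde\eps \geq \eta$; each such event has probability at most $C(1+\|x\|^2)/\sqrt{n}$ uniformly in $s$ by Chebyshev's inequality applied with a sufficiently high moment, together with Lemma~\ref{holder} and standard Gaussian tail bounds. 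Integrating in $s$ and combining with the good-set estimate gives the asserted inequality. The main obstacle I anticipate is preserving the ellipticity lower bound $\|a_s\|^2 \geq c_0^2/4$ on $G_s$ in the presence of the random Milstein correction $A_\sigma(\eul_{n,\underline{s}_n}^x, W_s - W_{\underline{s}_n})$; this requires both the boundedness of $\sigma$ on $\Gamma^{\eps_0}$ from (C)(iii) and the careful calibration of $\eta$ in terms of $c_0, \|g'\|_\infty$ and $\|\partial\sigma\|_\infty$, features absent from the analogous Euler-Maruyama argument in~\cite{MGYR24}.
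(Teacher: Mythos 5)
Your argument is correct and reaches the stated bound, but it uses a different replacement mechanism than the paper for transferring the ellipticity of the diffusion to the occupation-time estimate. Both proofs share the same skeleton: compose the scheme with the auxiliary function $g$ from Proposition~\ref{gprop}, use Tanaka's formula together with the moment bounds to get $\sup_{a}\EE[L_1^a(Y_n^x)] \lesssim 1+\|x\|^2$, and convert this into an occupation-time bound via the local-time density formula, which yields $\EE[\int_0^1 \one_{\{|Y_s|\leq\tilde\eps\}}V_sV_s^\top\,ds]\lesssim(1+\|x\|^2)\tilde\eps$ with $V_s=g'(\eul^x_{n,s})\hat\sigma(\eul^x_{n,\underline{s}_n},W_s-W_{\underline{s}_n})$. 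Where you diverge is in how to pass from $V_sV_s^\top$ to the constant lower bound $\kappa^2$: the paper introduces $R_t=(g'\sigma)(\eul^x_{n,t})$, notes that $R_tR_t^\top\geq\kappa^2$ on $\Gamma^{\tilde\eps}$, and establishes the $L^1$-comparison $\int_0^1\EE[|R_tR_t^\top-V_tV_t^\top|]\,dt\lesssim(1+\|x\|^2)n^{-1/2}$, so the single inequality $\kappa^2\one_{\{\eul^x_{n,t}\in\Gamma^{\tilde\eps}\}}\leq\one_{[-\tilde\eps,\tilde\eps]}(Y^x_{n,t})V_tV_t^\top+|R_tR_t^\top-V_tV_t^\top|$ finishes the proof. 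You instead introduce a deterministic-radius good set $G_s$ on which a pointwise lower bound $\|V_s\|\geq c_0/2$ holds outright, and dispose of the complement by Chebyshev and Lemma~\ref{holder}. Your route is more hands-on and makes the role of the ellipticity more transparent, but the paper's $L^1$-comparison argument is a bit shorter because it handles good and bad sets simultaneously in one inequality. Two small points to tidy up in your version: the threshold for the ``trivial'' case should be $\tilde\eps\geq\eps_0-2\eta$ rather than $\tilde\eps\geq\eps_0$, so that $\eps_0-\eta-\tilde\eps\geq\eta$ genuinely holds in the non-trivial regime (this is harmless since $\eta$ is a fixed constant with $\eta<\eps_0/2$); and one should make sure, as in the paper, that the $\eps$ produced by Proposition~\ref{gprop} is taken small enough that $\sigma$ is bounded on $\Gamma^{\eps}$, which is needed to obtain the uniform $M$ controlling the Milstein correction term.
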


\begin{proof}
 Let $\Gamma \in \{\Theta, \Delta \}$, $x\in\R^d$ and $n\in\N$ and note that by~\eqref{intrep}, Lemma~\ref{lingrowth} and Lemma~\ref{holder}, the process $\eul_n^{x}$ is a continuous semi-martingale. Choose $\eps\in (0,\reach(\Gamma))$ and $g\in C^2(\R^d;\R)$ according to Proposition~\ref{gprop}, put
 \[
 Y^x_n = g\circ \eul_n^{x}, \,\,\kappa = \inf_{y\in\Gamma^\eps}\|g'(y) \sigma(y)\|
 \] 
 and
 \[
 	\hat{\sigma}(x,y) = \sigma(x) + A_{\sigma}(x,y)
 \]
 for $x,y \in \R^{d}$
 and note that $\kappa >0$ due to Proposition~\ref{gprop}(iii).
 By the It\^{o} formula we obtain that $Y^x_n$ is continuous semi-martingale such that almost surely, for every $t\in[0,1]$,
 \[
 Y^x_{n,t} = g(x) + \int_0^t U_s\, ds + \int_0^t V_s\, dW_s,
 \] 
 where 
 \begin{align*}
 &U_s = g'(\eul^x_{n,s})\mu(\eul^x_{n,\usn}) + \frac{1}{2}\tr \bigl(g''(\eul^x_{n,s})\hat{\sigma}(\eul^x_{n,\usn},W_{s}-W_{\underline{s}_{n}}) \hat{\sigma}(\eul^x_{n,\usn},W_{s}-W_{\underline{s}_{n}})^\top\bigr), \\
 &V_s = g'(\eul^x_{n,s})\hat{\sigma}(\eul^x_{n,\usn}, W_{s} - W_{\underline{s}_{n}})
 \end{align*}
 for every $s\in [0,1]$. Moreover, $Y^x_n$ has quadratic variation
 \[
 \langle Y_n^x\rangle_t = \int_0^t V_sV_s^\top\, ds.
 \]
 
 For $a\in\R$ let $L^a(Y_n^x) = (L^a_t(Y_n^x))_{t\in[0,1]}$ denote the local time of $Y^x_n$ at the point $a$. By  the Tanaka formula, see e.g.~\cite[Chap. VI]{RevuzYor2005} we have for all $a\in\R$ and all $t\in[0,1]$,
 \[
 L^a_t(Y_n^x) = |Y^x_{n,t} -a| - |x-a| - \int_0^t \sgn(Y_{n,s}^x-a)\, U_s \, ds - \int_0^t \sgn(Y_{n,s}^x-a)\, V_s \, dW_s 
 \]
 and therefore
 \begin{equation}\label{oct0}
 	L^a_t(Y_n^x) \le |Y^x_{n,t} -x | + \int_0^t |U_s| \, ds + \biggl| \int_0^t \sgn(Y_{n,s}^x-a)\, V_s \, dW_s \biggr|. 
 \end{equation}
 Using Lemma~\ref{lingrowth} and Proposition~\ref{grep}(i) we get that there exist $c_1,c_2\in (0,\infty)$ such that for every $s\in [0,1]$,
 \begin{align}\label{oct1}
 	&\nonumber |U_s|  \le c_1\Bigl( \|g'\|_\infty (1+\|\eul_n^x\|_\infty) \\
 	&\nonumber + \|g''\|_\infty (1+\|\eul_n^x\|_\infty)^2 (1+\max_{j \in \{1,\dots,d \}} \norm{\partial \sigma_{j}}_{\infty}\norm{W_{s}-W_{\underline{s}_{n}}})^{2} \Bigr) \\
 	& \le c_2 (1+\|\eul_n^x\|^2_\infty + \norm{W_{s}-W_{\underline{s}_{n}}}^{2} + \norm{\eul_{n,\underline{s}_{n}}^{x}}\norm{W_{s}-W_{\underline{s}_{n}}}^{2})
 \end{align}
  and
  \begin{align}\label{oct2}
 	&\nonumber |V_sV_s^\top| \le c_1\bigl( \|g'\|^2_\infty (1+\|\eul_n^x\|_\infty)^2 (1+\max_{j \in \{1,\dots,d \}} \norm{\partial\sigma_{j}}_{\infty}\norm{W_{s}-W_{\underline{s}_{n}}})^{2} \\ 
 	& \le c_2 (1+\|\eul_n^x\|^2_\infty + \norm{W_{s}-W_{\underline{s}_{n}}}^{2} + \norm{\eul_{n,\underline{s}_{n}}^{x}}\norm{W_{s}-W_{\underline{s}_{n}}}^{2}).
 \end{align} 
Using the H\"older inequality, the Burkholder-Davis-Gundy inequality, the estimates~\eqref{oct1} and~\eqref{oct2}  and the second estimate in Lemma~\ref{holder} we conclude that there exist $c_1,c_2\in (0,\infty)$ such that for all $x\in \R^d$, all $n\in\N$, all $a\in\R$ and   every $t\in [0,1]$,
\begin{equation}\label{oct3}
	\begin{aligned}
	\EE\bigl[L^a_t(Y_n^x)\bigr] & \le c_1 \int_0^1 \EE (|U_s|) \, ds + c_1 \biggl( \int_0^1 \EE ( V_sV_s^\top) \, ds \biggr)^{1/2}\\ & \le c_2 \bigl(1+\EE(\|\eul_n^x\|^2_\infty)\bigr) + \int_{0}^{1} \E(\norm{\eul_{n,\underline{s}_{n}}^{x}}^{2}) \E(\norm{W_{s}-W_{\underline{s}_{n}}}^{2}) ds  \le c_3 (1+\|x\|^2). 
	\end{aligned}
\end{equation}
Let $\tilde\eps\in (0,\infty)$. By  the occupation time formula, see e.g. \cite[Chap. VI]{RevuzYor2005}, and~\eqref{oct3} we conclude that there exists $c\in (0,\infty)$ such that for all $x\in \R^d$, all $n\in\N$ and all $a\in\R$,
\begin{equation}\label{oct4}
	\begin{aligned}
		\EE\biggl[&\int_0^1 \one_{[-\tilde\eps,\tilde\eps]} (Y_{n,t}^x) V_tV_t^\top \, dt\biggr]\\
		& = \int_{\R} \one_{[-\tilde\eps,\tilde\eps]} (a) \EE\bigl[L^a_t(Y_n^x)\bigr]\, da \le c(1+\|x\|^2)\tilde\eps. 
	\end{aligned}
\end{equation}	
   Put 
   \[
   R_t = (g'\sigma)(\eul^x_{n,t})
   \]
    for every $t\in[0,1]$. 
	Now by the boundedness of $g'$ and the Lipschitz continuity of $\sigma$ there exist $c,\tilde{c} \in (0,\infty)$ such that 
	\begin{equation*} 
		\begin{aligned}
		&	\int_{0}^{1}\EE\bigl[ |(R_tR_t^{\top}) - V_tV_t^{\top}|\bigr]\, dt\\
			& \leq \int_{0}^{1} \EE\bigl[ | g'(\eul_{n,t}^{x}) ( (\sigma\sigma^{\top})(\eul_{n,t}^{x}) - (\hat{\sigma}\hat{\sigma}^{\top})(\eul_{n,\utn}^{x}, W_{t} - W_{\underline{t}_{n}}) ) g'(\eul_{n,t}^{x})^\top|\bigr]\, dt  \\
			&\leq c\int_{0}^{1} \EE\bigl[ \|  (\sigma\sigma^{\top})(\eul_{n,t}^{x}) - (\hat{\sigma}\hat{\sigma}^{\top})(\eul_{n,\utn}^{x},W_{t} - W_{\underline{t}_{n}}) \| \bigr]\, dt  \\
			& \leq 4 c\int_{0}^{1} \EE\bigl[ \|  \sigma(\eul_{n,t}^{x}) - \hat{\sigma}(\eul_{n,\utn}^{x}, W_{t} - W_{\underline{t}_{n}}) \|^{2} \bigr]^{\frac{1}{2}} \, dt (\EE(\sup_{t \in [0,1]}\| \sigma(\eul_{n,t}^{x})\|^{2})^{\frac{1}{2}} \\
			& \qquad + \EE(\sup_{t \in [0,1]} \hat{\sigma}(\eul_{n,\utn}^{x}, W_{t} - W_{\underline{t}_{n}}))^{2})^{\frac{1}{2}} \\
			& \leq 4\tilde{c} \int_{0}^{1} \EE\bigl( \| \eul_{n,t}^{x} - \eul_{n,\utn}^{x} \|^{2} \bigr)^{\frac{1}{2}} + \EE(\| A_{\sigma}(\eul_{n,\utn}^{x}, W_{t} - W_{\underline{t}_{n}}) \|^{2})^{\frac{1}{2}} \, dt \\
			&\quad \Bigl(\EE\bigl(\sup_{t \in [0,1]}(1+  \| \eul_{n,t}^{x}\|)^{2}\bigr)^{\frac{1}{2}} + \EE\bigl(\sup_{t \in [0,1]} (1+ \norm{\eul_{n,t}^{x}})^{2} \| A_{\sigma}(\eul_{n,\utn}^{x}, W_{t} - W_{\underline{t}_{n}}) \|^{2}\bigr)^{\frac{1}{2}} \Bigr)
		\end{aligned}
	\end{equation*}
Moreover, there exists a constant $c \in (0,\infty)$ such that for $t \in [0,1]$ and $n \in \N$ we have
\begin{align*}
	&\norm{A_{\sigma}(\eul_{n,\underline{t}_{n}}^{x},W_{t}-W_{\underline{t}_{n}})} \\
	& \leq c  \max_{j = 1,\dots,d} \norm{\partial\sigma_{j}}_{\infty}\norm{\sigma(\eul_{n,\cdot})}_{\infty}\norm{W_{t} - W_{\underline{t}_{n}}}
\end{align*}
The boundedness of $\partial\sigma$ along with the linear growth property of $\sigma$ and Lemmas~\ref{replem} and \ref{holder} yield the existence of $c_{1},c_{2} \in (0,\infty)$ such that for all $n \in \N$ and $x \in \R^{d}$ we have
\begin{align*}
	\int_{0}^{1} \EE(\| A_{\sigma}(\eul_{n,\utn}^{x}, W_{t} - W_{\underline{t}_{n}}) \|^{2})^{\frac{1}{2}} dt
	 \leq c_{1} (1+\norm{x}^{2}) \frac{1}{\sqrt{n}}.
\end{align*}
and
\begin{align*}
	\EE(\sup_{t \in [0,1]} (1+ \norm{\eul_{n,t}^{x}})^{2} \| A_{\sigma}(\eul_{n,\utn}^{x}, W_{t} - W_{\underline{t}_{n}}) \|^{2})^{\frac{1}{2}} \leq c_{2} 
\end{align*}
Overall we obtain
\begin{equation} \label{diffbd}
	\int_{0}^{1} \E(\abs{R_{t}R_{t}^{\top} - V_{t}V_{t}^{\top}}) dt \leq c (1+\norm{x}^{2}) \frac{1}{\sqrt{n}}
\end{equation}
Without loss of generality we may assume that $\tilde\eps \le \eps$. Employing Proposition~\eqref{grep}(iv) as well as~\eqref{oct4} and~\eqref{diffbd} we conclude that there exists $c\in (0,\infty)$ such that for all $x\in \R^d$ and all $n\in\N$,
\begin{equation}\label{oct5}
	\begin{aligned}
	&\EE\biggl[	\int_{0}^{1} \one_{\{\widehat{X}_{n,t}^{x} \in \Gamma^{\tilde\epsilon} \}} \, dt\biggr] \\
	& = \frac{1}{\kappa^2} 	\,	\EE\biggl[	\int_{0}^{1} \kappa^2\one_{\{\widehat{X}_{n,t}^{x} \in \Gamma^{\tilde\epsilon} \} } \one_{\{ |g(\eul_{n,t}^x)| < \tilde \eps  \}}   \, dt\biggr]\\ 
	& \le \frac{1}{\kappa^2} \,\Biggl(\EE\biggl[\int_0^1 \one_{[-\tilde\eps,\tilde\eps]} (Y_{n,t}^x) V_tV_t^\top \, dt\biggr] + \EE\biggl[\int_0^1  | R_tR_t^\top - V_{t}V_{t}^{\top}| \, dt\biggr] \Biggr)	 \\
	& \le c(1+\|x\|^2)\Bigl(\tilde{\epsilon} + \frac{1}{\sqrt{n}}  \Bigr),
	\end{aligned}
\end{equation}
which finishes the proof of Lemma~\ref{occtime}.
\end{proof}

 We turn to the main result in this section, which provides an $L_p$-estimate of the total amount of times $t$ when $\eul_{n,t}$ and $\eul_{n,\utn}$ stay on 'different sides' of potential irregularities of $\mu$ and $\sigma$.
It has to be noted, that the following Proposition \ref{occtime3} and the subsequent Lemmas \ref{normbd} and \ref{normbd2} are formulated very similary to Proposition 3, Lemma 18 and Lemma 19 in \cite{MGYR24} and the proofs work analogously. To exhibit how the results from the previous subsection, which had to be proven in a different way than in \cite{MGYR24}, are applied to obtain Proposition \ref{occtime3} we repeat them here. This is for the convenience of the reader.
\begin{Prop} \label{occtime3}
	For every $p\in [1,\infty)$, $\Gamma \in \{\Theta, \Delta \}$ and every $\delta\in (0,1/2)$ there  exists $c \in (0,\infty)$ such that for  all $n \in \N$,
	\begin{equation} 
		\EE\biggl[\Bigl|	\int_{0}^{1} \one_{\{d(\eul_{n,t}, \Gamma)\le\|\eul_{n,t} - \eul_{n,\utn}\| \}}\, dt\Bigr|^p\biggr]^{1/p} \leq \frac{c}{n^{\frac{1}{2}-\delta}}.
	\end{equation}
\end{Prop}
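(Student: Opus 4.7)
The strategy is to decouple the two random quantities appearing in the event: the distance of $\eul_{n,t}$ to $\Gamma$ and the size of the increment $\eul_{n,t} - \eul_{n,\utn}$. Fix an auxiliary scale $\eta_n := n^{-1/2+\delta/2}$ and use the pointwise inclusion
\[
\one_{\{d(\eul_{n,t}, \Gamma)\le\|\eul_{n,t} - \eul_{n,\utn}\|\}} \le \one_{\{d(\eul_{n,t},\Gamma)\le\eta_n\}} + \one_{\{\|\eul_{n,t} - \eul_{n,\utn}\|>\eta_n\}}.
\]
Integrating in $t$ and applying the triangle inequality in $L_p(\Omega)$ reduces the proof to $L_p$-bounds on the two resulting integrals.

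For the large-increment integral one has $\int_0^1 \one_{\{\|\eul_{n,t} - \eul_{n,\utn}\|>\eta_n\}}\, dt \le \one_{E_n}$ where $E_n := \{\sup_{t\in[0,1]}\|\eul_{n,t} - \eul_{n,\utn}\|>\eta_n\}$. Lemma~\ref{holder} gives, for every $q \ge 1$ and every $\epsilon > 0$, $\EE[\sup_t\|\eul_{n,t} - \eul_{n,\utn}\|^q] \le c_q\, n^{-q(1/2-\epsilon)}$, and Markov's inequality then yields $\PP(E_n) \le c_q\, n^{-q(\delta/2-\epsilon)}$; choosing $\epsilon = \delta/4$ together with $q$ large enough (depending on $p$ and $\delta$) delivers $\EE[\one_{E_n}^p]^{1/p} = \PP(E_n)^{1/p} \le c\, n^{-1/2+\delta}$. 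For the near-surface integral $J_n := \int_0^1 \one_{\{d(\eul_{n,t},\Gamma)\le\eta_n\}}\, dt$, I plan to rerun the proof of Lemma~\ref{occtime} in $L_p(\Omega)$. Passing to the semi-martingale $Y_n = g\circ \eul_n$ with the $g$ from Proposition~\ref{gprop} and using $\inf_{\Gamma^{\eps}}\|g'\sigma\|=\kappa>0$, the occupation-time formula gives
\[
\int_0^1 \one_{\{|Y_{n,t}|\le\eta_n\}} V_t V_t^\top\, dt \le \int_{-\eta_n}^{\eta_n} L^a_1(Y_n)\, da,
\]
while the discrepancy $\int_0^1 |R_tR_t^\top - V_tV_t^\top|\, dt$ is of order $n^{-1/2}$ in $L_p$ by the same chain of estimates used to prove Lemma~\ref{occtime}, upgraded to $L_p$ via Lemma~\ref{holder} and the boundedness of $\partial\sigma$. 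Minkowski's integral inequality then yields
\[
\EE[J_n^p]^{1/p} \le 2\eta_n\sup_{|a|\le \eta_n}\EE[L^a_1(Y_n)^p]^{1/p} + c\, n^{-1/2}.
\]

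The main obstacle is the uniform-in-$a$ (over any bounded range) and uniform-in-$n$ $L_p$ bound $\sup_{|a|\le 1}\EE[L^a_1(Y_n)^p]^{1/p} \le c(1+\|x_0\|^2)$ on the local times of $Y_n$. This is obtained by reading the Tanaka-formula chain used in the proof of Lemma~\ref{occtime} in $L_p$ rather than $L_1$: since $|\sgn(Y_{n,s}-a)|\le 1$, the stochastic-integral term in Tanaka's formula admits an $a$-uniform Burkholder-Davis-Gundy bound, and the $L_p$-moments of $\int_0^1|U_s|\, ds$ and $\int_0^1 V_sV_s^\top\, ds$ are uniformly bounded in $n$ by Lemma~\ref{holder}, the boundedness of $\partial\sigma$ and the linear growth of $\sigma$ from Lemma~\ref{lingrowth}. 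Combining, $\EE[J_n^p]^{1/p} \le c\, n^{-1/2+\delta/2}$, which together with the rare-event bound $\EE[\one_{E_n}^p]^{1/p} \le c\, n^{-1/2+\delta}$ completes the proof (after relabelling $\delta$).
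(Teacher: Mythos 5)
Your proof is correct, but it takes a genuinely different route from the paper's. The paper never attempts an $L_p$ estimate on the occupation time directly: it keeps Lemma~\ref{occtime} as a pure $L_1$ bound and feeds it into the Markov property (Lemma~\ref{markov}) to produce the restart estimate in Lemma~\ref{normbd2}, namely $\int_s^1 \PP(A\cap A_{n,t}^\Gamma)\,dt \le c\,n^{-(1/2-\tilde\delta)}\PP(A)^\rho$ for $A\in\mathcal F_s$. The $p$-th moment is then written as a $p$-fold iterated integral over the ordered simplex, Lemma~\ref{normbd2} is applied $p$ times, and the exponent $(1/2-\tilde\delta)(1-\rho^p)/(1-\rho)$ is pushed toward $p(1/2-\delta)$ by taking $\rho\to 1$. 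Your argument replaces all of that with a threshold split at $\eta_n = n^{-1/2+\delta/2}$, a crude Markov-inequality tail bound on $\{\sup_t\|\eul_{n,t}-\eul_{n,\utn}\|>\eta_n\}$ (powered by the third estimate in Lemma~\ref{holder}), and a direct $L_p$ upgrade of the Tanaka/occupation-density machinery behind Lemma~\ref{occtime}: bounded $g$, $|\sgn|\le 1$, Burkholder--Davis--Gundy, and Minkowski's integral inequality give $\sup_a\EE[L_1^a(Y_n)^p]^{1/p}<\infty$ uniformly in $n$, and the discrepancy $\int_0^1|R_tR_t^\top-V_tV_t^\top|\,dt$ is $O(n^{-1/2})$ in $L_p$ by Minkowski plus the pointwise-in-$t$ one-step estimates and independence of $W_t-W_{\utn}$ from $\mathcal F_{\utn}$. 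What your approach buys is that the Markov/restart machinery and the $\rho$-tuning exponent bookkeeping disappear entirely, and the occupation bound is only needed at the single starting point $x_0$; what the paper's buys is that the hard analytic work (local times) stays at the $L_1$ level and the $x$-uniform version of Lemma~\ref{occtime} becomes reusable through the restart argument. One small remark: the events the paper manipulates via $A_{n,t}^\Gamma$ use $d(\eul_{n,\utn},\Gamma)$ rather than $d(\eul_{n,t},\Gamma)$ as in the Proposition; the two are comparable up to a factor $2$, and your version works directly with the quantity in the statement, which is slightly cleaner.
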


For the proof of Proposition~\ref{occtime3} we first establish the following auxiliary estimate. For all $\Gamma \in \{\Theta, \Delta \}$, $t \in [0,1]$ and all $n \in \N$ we put	
\begin{equation}\label{setA}
	A_{n,t}^{\Gamma} = \{ d(\eul_{n,\utn}, \Gamma)\le\|\eul_{n,t} - \eul_{n,\utn}\| \}. 
\end{equation}

\begin{Lem}\label{normbd}
	Let $q \in [1,\infty)$. 
	Then for all $\delta \in (0,1/2)$ and all $\rho \in (0,1)$ there exists $c \in (0,\infty)$ such that for all $n \in \N$, all $t\in [0,1]$, all $\Gamma \in \{\Theta,\Delta \}$  and all $A \in \mathcal F$,
	\[
	\PP(A \cap A_{n,t}^{\Gamma}) \leq \frac{c}{n}\,\PP(A)^{\rho} +  \PP \Bigl(A \cap \Bigl\{d(\eul_{n,t},\Gamma) \leq \frac{2}{n^{1/2 - \delta}} \Bigr\}\Bigr). 
	\]
\end{Lem}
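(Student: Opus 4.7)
My plan for proving Lemma~\ref{normbd} is to split the event $A \cap A_{n,t}^{\Gamma}$ according to whether the Milstein increment $\|\eul_{n,t} - \eul_{n,\utn}\|$ is small or large, thereby converting the requirement on $d(\eul_{n,\utn},\Gamma)$ into one on $d(\eul_{n,t},\Gamma)$ via the triangle inequality, and using a high-moment estimate of the increment for the complementary regime. The single threshold I would use is exactly $1/n^{1/2-\delta}$, because this is precisely the scale appearing in the right-hand side.

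First I would write
\[
A \cap A_{n,t}^{\Gamma} = \bigl(A \cap A_{n,t}^{\Gamma} \cap B_{n,t}\bigr) \cup \bigl(A \cap A_{n,t}^{\Gamma} \cap B_{n,t}^{c}\bigr),
\]
where $B_{n,t} = \{\|\eul_{n,t}-\eul_{n,\utn}\| \le 1/n^{1/2-\delta}\}$. On $A_{n,t}^{\Gamma}\cap B_{n,t}$, the definition of $A_{n,t}^{\Gamma}$ in \eqref{setA} gives $d(\eul_{n,\utn},\Gamma) \le 1/n^{1/2-\delta}$, so the triangle inequality for the distance function yields
\[
d(\eul_{n,t},\Gamma) \le d(\eul_{n,\utn},\Gamma) + \|\eul_{n,t}-\eul_{n,\utn}\| \le \tfrac{2}{n^{1/2-\delta}}.
\]
This immediately controls the first piece by $\PP(A\cap\{d(\eul_{n,t},\Gamma)\le 2/n^{1/2-\delta}\})$, producing the second term in the bound.

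For the remaining piece, $A \cap B_{n,t}^{c}$, I would apply H\"older's inequality with conjugate exponents $1/\rho$ and $1/(1-\rho)$ to the indicator product, obtaining
\[
\PP(A \cap B_{n,t}^{c}) \le \PP(A)^{\rho}\, \PP(B_{n,t}^{c})^{1-\rho}.
\]
The remaining task is to show $\PP(B_{n,t}^{c})^{1-\rho} \le c/n$, which by Markov's inequality reduces to an $L_p$-bound on the increment $\|\eul_{n,t}-\eul_{n,\utn}\|$. Choose any $\eps \in (0,\delta)$ and invoke the last assertion of Lemma~\ref{holder} to get $\EE[\|\eul_{n,t}-\eul_{n,\utn}\|^{p}] \le c_{p}/n^{p(1/2-\eps)}$ for every $p \ge 1$. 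Then
\[
\PP(B_{n,t}^{c}) \le n^{p(1/2-\delta)}\,\EE\bigl[\|\eul_{n,t}-\eul_{n,\utn}\|^{p}\bigr] \le c_{p}\, n^{-p(\delta-\eps)},
\]
and choosing $p$ large enough so that $p(\delta-\eps)(1-\rho) \ge 1$ delivers $\PP(B_{n,t}^{c})^{1-\rho} \le c/n$, completing the bound.

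There is no serious obstacle here, since all ingredients are already in place; the only minor care needed is to pick the auxiliary parameters $\eps$ and $p$ in the correct order, namely first fix $\delta$ and $\rho$, then choose $\eps < \delta$ and finally $p$ depending on $\delta,\rho,\eps$, so that the final constant $c$ depends only on $\delta$ and $\rho$ (and not on $n$, $t$, $\Gamma$, or $A$). The uniformity in $\Gamma \in \{\Theta,\Delta\}$ is automatic because the argument uses $\Gamma$ only through the triangle inequality and the distance function.
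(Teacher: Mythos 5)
Your proof is correct and follows the same route as the paper: split on the threshold $1/n^{1/2-\delta}$, use the triangle inequality on the small-increment part to pass from $d(\eul_{n,\utn},\Gamma)$ to $d(\eul_{n,t},\Gamma)$, and control the complementary event via H\"older plus a Markov bound fed by the moment estimates of Lemma~\ref{holder}. The only cosmetic difference is that you invoke the lossier final assertion of Lemma~\ref{holder} (forcing the auxiliary parameter $\eps$), whereas the paper uses the sharper first assertion, namely $\EE[\|\eul_{n,t}-\eul_{n,\utn}\|^p]\le c/n^{p/2}$, and the explicit choice $p=(\delta(1-\rho))^{-1}$; this is immaterial.
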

\begin{proof}
	Fix  $\delta \in (0,\frac{1}{2})$ and $\rho \in (0,1)$. 
	First, note that for all $x,y\in\R^d$ with $d(y,\Gamma) \le \|x-y\|$ we have $	d(x,\Gamma) \le \|x-y\| + d(y,\Gamma) \le 2\|x-y\|$, which implies that for all $n \in \N$ and all $t\in [0,1]$,
	\[
	A_{n,t}^{\Gamma}\subset \{d(\eul_{n,t},\Gamma) \leq 2 \|\eul_{n,t}-\eul_{n,\utn} \| \}.
	\]
	Hence,  for all $n \in \N$, all $t\in [0,1]$  and all $A \in \mathcal F$,
	\begin{equation} \label{newbd}
		\PP\Bigl(A \cap A_{n,t}^{\Gamma} \cap \Bigl\{ \| \eul_{n,t}-\eul_{n,\utn}\|     \le \frac{1}{n^{1/2- \delta}}\Bigr\} \Bigr)  \leq \PP\Bigl(A \cap \Bigl\{d(\eul_{n,t},\Gamma) \leq \frac{2}{n^{1/2 - \delta}} \Bigr\}\Bigr).
	\end{equation}
	
	By Lemma \ref{holder} and the Markov inequality we obtain that for all $p \in [1,\infty)$ there exists  $c \in (0,\infty)$ such that for all $n \in \N$ and $t \in [0,1]$, 
	\begin{equation}\label{newbd2}
		\PP\Bigl(    \|\eul_{n,t}- \eul_{n,\utn}  \|      >  \frac{1}{n^{1/2- \delta}}  \Bigr)  \leq  \EE \bigl[ \|  \eul_{n,t}- \eul_{n,\utn}  \|^p \bigr] \, n^{ (1/2 - \delta)p }  
		\leq  \frac{c}{n^{\delta p}}.
	\end{equation}
	Employing~\eqref{newbd2} with $p=(\delta(1-\rho))^{-1}$ we conclude that there exists  $c \in (0,\infty)$ such that for all $n \in \N$, all $t \in [0,1]$ and all $A \in \mathcal F$,
	\begin{equation} \label{newbd3}
		\begin{aligned}
			&\PP\Bigl(A \cap A_{n,t}^{\Gamma}\cap \Bigl\{ \| \eul_{n,t}-\eul_{n,\utn} \|     > \frac{1}{n^{1/2- \delta}}\Bigr\} \Bigr) \\
			&\qquad \qquad \le \PP\Bigl(A \cap \Bigl\{ \|\eul_{n,t}-\eul_{n,\utn}\|     > \frac{1}{n^{1/2- \delta}}\Bigr\} \Bigr)\\
			&\qquad \qquad \le  \PP(A)^{\rho} \, \PP\Bigl( \| \eul_{n,t}-\eul_{n,\utn} \|     > \frac{1}{n^{1/2- \delta}} \Bigr)^{1-\rho} \leq  \PP(A)^{\rho} \, \frac{c}{n}.
		\end{aligned}
	\end{equation}
	Combining \eqref{newbd} and \eqref{newbd3}  completes the proof of Lemma~\ref{normbd}.
\end{proof}
Based on Lemma~\ref{normbd} we establish the following estimate, which in particular yields Proposition~\ref{occtime3} in the case $p=1$. 
\begin{Lem}\label{normbd2}
	For all $\Gamma \in \{\Theta,\Delta \}$, $\delta \in (0,1/2)$ and $\rho \in (0,1)$ there exists $c \in (0,\infty)$ such that for all $n \in \N$, all $s\in [0,1]$  and all $A \in \mathcal F_s$,
	\[
	\int_s^1 \PP(A \cap A_{n,t}^{\Gamma}) \, dt \leq \frac{c}{n^{1/2-\delta}} \,\PP(A)^{\rho}. 
	\]
\end{Lem}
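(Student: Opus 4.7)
The plan is to start from the pointwise bound supplied by Lemma~\ref{normbd}, integrate it in $t$, and handle the remaining term $\int_s^1 \PP(A \cap \{d(\eul_{n,t},\Gamma)\leq 2/n^{1/2-\delta}\})\,dt$ by conditioning on a discretization time point and applying the occupation time estimate Lemma~\ref{occtime} together with the Markov property Lemma~\ref{markov}. Fix $\delta\in(0,1/2)$ and $\rho\in(0,1)$. By Lemma~\ref{normbd},
\[
\int_s^1 \PP(A\cap A_{n,t}^{\Gamma})\,dt \leq \frac{c_1}{n}\PP(A)^{\rho} + \int_s^1 \PP\Bigl(A\cap \Bigl\{d(\eul_{n,t},\Gamma)\leq \tfrac{2}{n^{1/2-\delta}}\Bigr\}\Bigr)\,dt,
\]
and it suffices to bound the last integral by $c/n^{1/2-\delta}\cdot \PP(A)^{\rho}$.

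Set $j_0=\lceil sn\rceil$, so that $s\leq j_0/n\leq s+1/n$ and $A\in\F_s\subset \F_{j_0/n}$. On the short piece $[s,j_0/n]$, which has length at most $1/n$, I will use the trivial bound $\PP(A\cap\cdots)\leq \PP(A)\leq \PP(A)^{\rho}$ (since $\PP(A)\leq 1$ and $\rho<1$), producing a contribution of order $\PP(A)^{\rho}/n$. The substance lies in the interval $[j_0/n,1]$. Here I condition on $\F_{j_0/n}$: since $A\in\F_{j_0/n}$,
\[
\int_{j_0/n}^1 \PP\bigl(A\cap\{d(\eul_{n,t},\Gamma)\leq 2/n^{1/2-\delta}\}\bigr)\,dt = \EE\Bigl[\one_A\int_{j_0/n}^1 \PP\bigl(d(\eul_{n,t},\Gamma)\leq 2/n^{1/2-\delta}\mid \F_{j_0/n}\bigr)\,dt\Bigr].
\]
By Lemma~\ref{markov}, the conditional probability in the integrand equals $\PP(\eul_{n,t-j_0/n}^y\in \Gamma^{2/n^{1/2-\delta}})\big|_{y=\eul_{n,j_0/n}}$. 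After the change of variables $u=t-j_0/n$, Fubini and Lemma~\ref{occtime} (applied with $\tilde\eps=2/n^{1/2-\delta}$) yield
\[
\int_0^{1-j_0/n}\PP\bigl(\eul_{n,u}^y\in\Gamma^{2/n^{1/2-\delta}}\bigr)\,du \;\leq\; c_2(1+\|y\|^2)\Bigl(\tfrac{2}{n^{1/2-\delta}}+\tfrac{1}{\sqrt{n}}\Bigr)\;\leq\;\frac{c_3}{n^{1/2-\delta}}(1+\|y\|^2),
\]
uniformly in $y\in\R^d$ and $n\in\N$. Substituting $y=\eul_{n,j_0/n}$ gives
\[
\int_{j_0/n}^1 \PP\bigl(A\cap\{d(\eul_{n,t},\Gamma)\leq 2/n^{1/2-\delta}\}\bigr)\,dt \leq \frac{c_3}{n^{1/2-\delta}}\,\EE\bigl[\one_A(1+\|\eul_{n,j_0/n}\|^2)\bigr].
\]

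The final step is to convert $\EE[\one_A(1+\|\eul_{n,j_0/n}\|^2)]$ into the desired $\PP(A)^{\rho}$. By Hölder's inequality with conjugate exponents $1/\rho$ and $1/(1-\rho)$,
\[
\EE\bigl[\one_A(1+\|\eul_{n,j_0/n}\|^2)\bigr] \leq \PP(A)^{\rho}\,\EE\bigl[(1+\|\eul_{n,j_0/n}\|^2)^{1/(1-\rho)}\bigr]^{1-\rho},
\]
and by the moment estimate in Lemma~\ref{holder} the remaining factor is bounded by a constant independent of $n$ and $j_0$. Combining the three contributions (the $c_1/n$ term from Lemma~\ref{normbd}, the short interval $[s,j_0/n]$, and the main term on $[j_0/n,1]$) delivers the assertion. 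The only nonroutine step is verifying that Lemma~\ref{markov} genuinely provides the required Markov property for the Milstein scheme despite its dependence on the iterated Itô integrals $J^n_{j_1,j_2}$; this is precisely what the representation via Lemma~\ref{replem} inside the proof of Lemma~\ref{markov} is designed to guarantee, and no additional work is needed here.
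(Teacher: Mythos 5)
Your proof is correct and follows essentially the same route as the paper: integrate the pointwise bound from Lemma~\ref{normbd}, condition on a discretization time $j_0/n\geq s$ via Lemma~\ref{markov}, bound the resulting occupation time by Lemma~\ref{occtime}, and finish with H\"older and the moment estimate from Lemma~\ref{holder}. The only cosmetic difference is that you use $j_0=\lceil sn\rceil$ where the paper uses $\underline s_n+1/n$ (these agree unless $s$ is a grid point), and you make the handling of the short interval $[s,j_0/n]$ explicit, which the paper leaves implicit.
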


\begin{proof}
	The statement trivially holds for $s=1$. Fix  $\delta \in (0,1/2)$ and $\rho \in (0,1)$. By Lemma~\ref{normbd} we obtain that there exists $c\in (0,\infty)$ such that for for all $n \in \N$, all $s \in [0,1)$ and all $A \in \mathcal F_s$,
	\begin{equation} \label{bd1}
		\int_{s}^{1} \PP(A \cap A_{n,t}^{\Gamma}) dt \leq \frac{c}{n} \PP(A)^{\rho} + \int_{\underline{s}_{n} + \frac{1}{n} }^{1} \PP\Bigl( A\cap \Bigl\{d(\eul_{n, t},\Gamma)  \leq \frac{2}{n^{1/2 - \delta}} \Bigr\}\Bigr)\, dt. 
	\end{equation}
	Using Lemma~\ref{markov} we get that for all $n \in \N$, all $s \in [0,1)$ and all $A \in \mathcal F_s$,
	\begin{equation} \label{betbd}
		\begin{aligned}
			&\int_{\underline{s}_{n} + \frac{1}{n}}^{1}  \PP\Bigl( A\cap \Bigl\{d(\eul_{n, t},\Gamma)  \leq \frac{2}{n^{1/2 - \delta}} \Bigr\}\Bigr)\, dt \\
			&\qquad\qquad = \EE\biggl[ \EE\biggl[ \one_{A} \int_{\underline{s}_{n} + \frac{1}{n}}^{1} \one_{\bigl\{ d(\eul_{n,t},\Gamma) \leq \frac{2}{n^{1/2 - \delta}}\bigr\}}\, dt\, \biggr| \mathcal F_{\underline{s}_{n} + \frac{1}{n}}\biggr]\biggr] \\
			&\qquad\qquad =\EE\biggl[ \one_{A} \EE\biggl[  \int_{\underline{s}_{n} + \frac{1}{n}}^{1} \one_{\bigl\{ d(\eul_{n,t},\Gamma) \leq \frac{2}{n^{1/2 - \delta}}\bigr\}}\, dt \, \biggr| \eul_{n,\underline{s}_{n} + \frac{1}{n}}\biggr]\biggr].
		\end{aligned}
	\end{equation}
	By Lemma~\ref{markov} and Lemma~\ref{occtime} we furthermore derive that there exists $c\in (0,\infty)$ such that  for all $n \in \N$, all $s \in [0,1)$ and  $\PP^{\eul_{n,\underline{s}_{n} + \frac{1}{n}}}$-almost all $x\in\R^d$,
	\begin{equation}\label{nextx1}
		\begin{aligned}
			\EE\biggl[  \int_{\underline{s}_{n} + \frac{1}{n}}^{1} \one_{\bigl\{ d(\eul_{n,t},\Gamma) \leq \frac{2}{n^{1/2 - \delta}}\bigr\}}\, dt \, \biggr| \eul_{n,\underline{s}_{n} + \frac{1}{n}} = x\biggr] & =
			\EE  \biggl[  \int_{0}^{1-(\underline{s}_{n} + \frac{1}{n})} \one_{\bigl\{ d(\eul_{n,t}^x,\Gamma) \leq \frac{2}{n^{1/2 - \delta}}\bigr\}}\, dt\biggr]  \\
			&\leq c (1+\|x\|^{2}) \frac{1}{n^{1/2-\delta}}.
		\end{aligned}
	\end{equation}
	Inserting~\eqref{nextx1} into~\eqref{betbd} and employing Lemma~\ref{holder} we conclude that there exist $c_1,c_2\in (0,\infty)$ such that for for all $n \in \N$, $s \in [0,1)$ and all $A \in \mathcal F_s$,
	\begin{equation} \label{betbd3}
		\begin{aligned}
			\int_{s}^{1}  \PP\Bigl( A\cap \Bigl\{d(\eul_{n, t},\Gamma)  \leq \frac{2}{n^{1/2 - \delta}} \Bigr\}\Bigr)\, dt
			&            \le  \frac{c_1}{n^{1/2-\delta}} \EE\bigl[ \one_A\, (1+\|\eul_{n,t}\|^{2})\bigr]\\
			&  \le   \frac{c_1}{n^{1/2-\delta}} \PP(A)^\rho\EE\bigl[  1+\|\eul_{n}\|_\infty^{2/(1-\rho)}\bigr]^{1-\rho} \\
			& \le  \frac{c_2}{n^{1/2-\delta}} \PP(A)^\rho.
		\end{aligned}
	\end{equation}
	Combining~\eqref{bd1} with~\eqref{betbd3} completes the proof of Lemma~\ref{normbd2}.
\end{proof}
We turn to the proof of Proposition~\ref{occtime3}.

\subsubsection*{Proof of Proposition~\ref{occtime3}} Let $\delta \in (0,1/2)$. Clearly, we may assume that $p\in\N$ and $p\ge 2$. Then, for all $n\in\N$,
\[ 
\EE\biggl[ \Bigl(\int_{0}^{1} \one_{A_{n,t}} \, dt\Bigr)^{p}\biggr]= p! \int_{0}^{1} \int_{t_{1}}^{1}\dots \int_{t_{p-1}}^{1} \PP(A_{n,t_{1}} \cap \dots \cap A_{n,t_{p}}) \, dt_{p} \dots dt_2\, dt_{1}.  
\] 
Let $\tilde\delta \in(0,  \delta)$ and $\rho\in (0,1)$. 
Iteratively applying Lemma~\ref{normbd2} $(p-1)$-times with $\tilde\delta$ in place of $\delta$, $s = t_{k}$ and $A= A_{n,t_{1}} \cap \dots \cap A_{n,t_{k}}\in \mathcal F_{t_k}$   for $k = p-1,\dots, 1$ and finally applying Lemma~\ref{normbd2} with $\tilde\delta$ in place of $\delta$, $A=\Omega$ and $s=0$ we conclude that 
there exist  $c_{1},\dots,c_{p} \in (0,\infty)$ depending only on $\tilde\delta$ and $\rho$ such that for all $n \in \N$,
\begin{equation} \label{eqbd}
	\begin{aligned}
		\EE\biggl[\Bigl(\int_{0}^{1} \one_{A_{n,t}} \, dt\Bigr)^{p}\biggr] & \leq p! \frac{c_{1}}{n^{1/2 - \tilde{\delta}}} \Bigl(\int_{0}^{1} \int_{t_{1}}^{1} \dots \int_{t_{p-2}}^{1} \PP( A_{n,t_{1}} \cap \dots \cap A_{n,t_{p-1}}) \, dt_{p-1} \dots dt_2 \,dt_{1}\Bigr)^{\rho} \\
		&\leq p! \,  \frac{c_{1} \cdots c_{p-1}}{n^{1/2-\tilde{\delta}} n^{(1/2- \tilde{\delta})\rho} \cdots n^{(1/2- \tilde{\delta}) \, \rho^{p-2}}} \Bigl(\int_{0}^{1} \PP(A_{n,t_{1}}) \, dt_{1}\Bigr)^{\rho^{p-1}} \\
		&\leq  p! \, \frac{c_{1} \cdots c_{p} }{n^{(1/2 - \tilde{\delta}) \frac{1-\rho^{p}}{1-\rho} } }.
	\end{aligned}
\end{equation} 

Since $\tilde\delta < \delta$ there exists $\eps \in (0,1)$ such that $p (1/2 - \delta) \leq (p-\eps)(1/2 - \tilde{\delta} )$. Since $\lim_{\rho\to 1} (1-\rho^{p})/(1-\rho) = p$ there exists $\rho \in (0,1)$ such that $(1-\rho^{p})/(1-\rho) \geq p - \eps$. With this choice of $\rho$ and $\tilde{\delta}$ 
in \eqref{eqbd} we finally conclude  that there exists $c > 0$ such that for all $n \in \N$,
\[ 
\EE\biggl[ \Bigl(\int_{0}^{1} \one_{A_{n,t}} \, dt\Bigr)^{p}\biggr] \leq \frac{c}{n^{(1/2 - \tilde{\delta}) \frac{1-\rho^{p}}{1-\rho} } } \leq \frac{c}{n^{p(1/2 - \delta)}},
\] 
which completes the proof of Proposition~\ref{occtime3}. \qed

\subsection{Proof of Theorem~\ref{Thm1}}\label{ProofThm1}
Clearly, we may assume that $p\in [2,\infty)$. 
For $n \in \N$ and $t \in [0,1]$ put 
\[
	B_{t} = \int_{0}^{t} \sigma(X_{s}) dW_{s}, \; \hat{B}_{n,t} = \int_{0}^{T} \sigma(\eul_{n,\underline{s}_{n}}) + A_{\sigma}(\eul_{n,\underline{s}_{n}},W_{s}-W_{\underline{s}_{n}}) dW_{s}
\]
as well as
\[
	A_{t} = \int_{0}^{t} \mu(X_{s}) ds, \; \hat{A}_{n,t} = \int_{0}^{t} \mu(\eul_{n,\underline{s}_{n}}) ds
\]
and
\[
	U_{n,t} = \int_{0}^{t}  \partial \mu(\eul_{n,\underline{s}_{n}})\sigma(\eul_{n,\underline{s}_{n}}) (W_{s}-W_{\underline{s}_{n}}) ds
\]
We will decompose $X_{t} - \eul_{n,t}$ in the following way.
\begin{align*}
	&X_{t}-\eul_{n,t} = \int_{0}^{t} \mu(X_{s}) - \mu(\eul_{n,\underline{s}_{n}}) ds + \int_{0}^{t} \sigma(X_{s}) - \sigma(\eul_{n,\underline{s}_{n}}) - A_{\sigma}(\eul_{n,\underline{s}_{n}},W_{s}-W_{\underline{s}_{n}}) dW_{s} \\
	&= (A_{t} - \hat{A}_{n,t} - U_{n,t}) + (B_{t}-\hat{B}_{n,t}) + U_{n,t}
\end{align*}
Now for $\Gamma \in \{\Theta,\Delta \}$ define $A_{\Gamma}:= \{(x,y) \in \R^{2d} \mid d(x,\Gamma) > \norm{x-y} \}$.
We show that for all $p,q \in [1,\infty]$ and $\delta \in (0,\frac{1}{2})$ there exists a constant $c > 0$ such that
\begin{align} \label{eqbd1}
	\E(\int_{0}^{1} \norm{\eul_{n,t} - \eul_{n,\underline{t}_{n}}}^{q} \one_{A_{\Gamma}^{c}}(\eul_{n,t},\eul_{n,\underline{t}_{n}})dt^{p}) \leq c \frac{1}{n^{p\frac{q+1}{2} - \delta}}
\end{align}
holds for $n \in \N$ and $\Gamma \in \{\Theta,\Delta \}$.
Clearly, we may assume $p\in\N$ and $p \geq 2$.
Then by Lemma \ref{holder} there exists a constant $c \in (0,\infty)$ such that  
\begin{align*} 
	&\EE\biggl[\Bigl|	\int_{0}^{1} \norm{\eul_{n,t} - \eul_{n,\underline{t}_{n}}}^{q} \one_{\{d(\eul_{n,t}, \Gamma)\le\|\eul_{n,t} - \eul_{n,\utn}\| \}}\, dt\Bigr|^p\biggr]^{1/p} \\
	& \leq \EE\biggl[\Bigl| \sup_{t \in [0,1]}\norm{\eul_{n,t} - \eul_{n,\underline{t}_{n}}}^{q}	\int_{0}^{1}  \one_{\{d(\eul_{n,t}, \Gamma)\le\|\eul_{n,t} - \eul_{n,\utn}\| \}}\, dt\Bigr|^p\biggr]^{1/p}\\
	& \leq \EE\biggl[\Bigl| \sup_{t \in [0,1]}\norm{\eul_{n,t} - \eul_{n,\underline{t}_{n}}}\Bigr|^{2qp}\biggr]^{\frac{1}{2p}} \EE\biggl[\Bigl| \int_{0}^{1}  \one_{\{d(\eul_{n,t}, \Gamma)\le\|\eul_{n,t} - \eul_{n,\utn}\| \}}\, dt\Bigr|^{2p}\biggr]^{1/2p} \\
	& \leq c \frac{1}{n^{\frac{q}{2} - \delta}}\EE\biggl[\Bigl| \int_{0}^{1}  \one_{\{d(\eul_{n,t}, \Gamma)\le\|\eul_{n,t} - \eul_{n,\utn}\| \}}\, dt\Bigr|^{2p}\biggr]^{1/2p}
\end{align*}
Now an application of Proposition \ref{occtime3} yields the desired result.
Furthermore there is a constant $c \in (0,\infty)$ such that for $t \in [0,1]$ and $n \in \N$ we have 
\begin{align*}
	&\norm{\mu(X_{t}) - \mu(\eul_{n,\underline{t}_{n}}) -  \partial \mu(\eul_{n,\underline{t}_{n}})\sigma(\eul_{n,\underline{t}_{n}}) (W_{t}-W_{\underline{t}_{n}})} \\
	& \leq \norm{\mu(X_{t}) - \mu(\eul_{n,t})} + \norm{\mu(\eul_{n,t}) - \mu(\eul_{n,\underline{t}_{n}}) - \partial \mu(\eul_{n,\underline{t}_{n}})(\eul_{n,t}-\eul_{n,\underline{t}_{n}})} \\
	& + \norm{ \partial{\mu}(\eul_{n,\underline{t}_{n}})(\eul_{n,t} - \eul_{n,\underline{t}_{n}} - \sigma(\eul_{n,\underline{t}_{n}})(W_{t} - W_{\underline{t}_{n}})) } \\
	& = \norm{\mu(X_{t}) - \mu(\eul_{n,t})}  + \norm{\mu(\eul_{n,t}) - \mu(\eul_{n,\underline{t}_{n}}) -\partial \mu(\eul_{n,\underline{t}_{n}})(\eul_{n,t} - \eul_{n,\underline{t}_{n}})} \one_{A_{\Theta}^{c}}(\eul_{n,t},\eul_{n,\underline{t}_{n}}) \\
	& \hspace{0.5cm} + \norm{\mu(\eul_{n,t}) - \mu(\eul_{n,\underline{t}_{n}}) -\partial \mu(\eul_{n,\underline{t}_{n}})(\eul_{n,t} - \eul_{n,\underline{t}_{n}})} \one_{A_{\Theta}}(\eul_{n,t},\eul_{n,\underline{t}_{n}}) \\
	& \hspace{0.5cm} + \Big\| \partial\mu(\eul_{n,\underline{t}_{n}}) \Bigl[  \mu(\eul_{n,\underline{t}_{n}})(t-\underline{t}_{n}) + \sum_{j_{1},j_{2} = 1}^{d} (\partial \sigma_{j_{2}} \sigma_{j_{1}})(\eul_{n,\underline{t}_{n}}) J_{j_{1},j_{2}}^{n}(t)  \Bigr]\Big\|  \\
	& \leq c \Bigl(\norm{X_{t} - \eul_{n,t}} + \norm{\eul_{n,t} - \eul_{n,\underline{t}_{n}}}^{2} + \norm{\eul_{n,t} - \eul_{n,\underline{t}_{n}}} \one_{A_{\Theta}^{c}}(\eul_{n,t},\eul_{n,\underline{t}_{n}}) \\
	& \hspace{0.5cm} +  (1+\norm{\eul_{n,\underline{t}_{n}}}) \Bigl(\frac{1}{n} + \sum_{j_{1},j_{2} = 1}^{d} \abs{J_{j_{1},j_{2}}^{n}(t)}\Bigr)\Bigr).
\end{align*}
Thus with \eqref{eqbd1} and Lemma \ref{replem} we obtain the existence of constants $c_{1},c_{2} \in (0,\infty)$ such that for all $n \in \N$ and $t \in [0,1]$ we have
\begin{align*}
	&\E\bigl(\sup_{0 \leq s \leq t} \norm{A_{s} - \hat{A}_{n,s} - U_{n,s} }^{p}\bigr) \\
	&\leq \E\bigl(\int_{0}^{t} \norm{\mu(X_{s}) - \mu(\eul_{n,\underline{s}_{n}}) - \partial \mu(\eul_{n,\underline{t}_{n}})\sigma(\eul_{n,\underline{t}_{n}}) (W_{t}-W_{\underline{t}_{n}})}^{p} ds \bigr) \\
	& \leq c_{1} \int_{0}^{t} \E(\norm{X_{s} - \eul_{n,s}}^{p}) ds + c_{1} \int_{0}^{t} \E(\norm{\eul_{s} - \eul_{\underline{s}_{n}}}^{2p}) ds \\
	& + c_{1} \E\bigl(\int_{0}^{1} \norm{\eul_{n,s} - \eul_{n,\underline{s}_{n}}} \one_{A_{\Theta}^{c}}(\eul_{n,s},\eul_{n,\underline{s}_{n}}) ds^{p} \bigr) \\
	& + c_{1} \int_{0}^{t} \E\bigl((1+\norm{\eul_{n,\underline{s}_{n}}}^{p})\bigl(\frac{1}{n^{p}} + \bigl(\sum_{j_{1},j_{2}= 1}^{d} \abs{J_{j_{1},j_{2}}^{n}(s)}\bigr)^{p}\bigr)\bigr) ds \\
	& \leq c_{2} \Bigl[ \int_{0}^{t} \E\bigl(\norm{X_{s} - \eul_{n,\underline{s}_{n}}}^{p}\bigr)ds  + \frac{1}{n^{p}} 	\\
	&+  \frac{1}{n^{p-\delta}} +  \int_{0}^{1}\E((1+\norm{\eul_{n,\underline{s}_{n}}})^{p}) \sum_{j_{1},j_{2}= 1}^{d} \E\bigl(\abs{\int_{\underline{s}_{n}}^{s}  W_{r}^{j_{1}} - W_{\underline{r}_{n}}^{j_{1}}  dW_{r}^{j_{2}}}^{p} \bigr) \Bigr] ds
\end{align*}
Now there exist constants $c_{1},c_{2} \in (0,\infty)$ such that for $s \in [0,1]$ and $n \in \N$
\begin{align*}
	&\E\bigl(\abs{\int_{\underline{s}_{n}}^{s}  W_{r}^{j_{1}} - W_{\underline{r}_{n}}^{j_{1}}  dW_{r}^{j_{2}}}^{p} \bigr) \leq c_{1}\E\bigl(\int_{\underline{s}_{n}}^{s} \abs{W_{r}^{j_{1}} - W_{\underline{r}_{n}}^{j_{1}}}^{2} ds^{\frac{p}{2}} \bigr) \\
	& \leq \frac{c_{1}}{n^{\frac{p}{2}-1}} \E\bigl(\int_{\underline{s}_{n}}^{s} \abs{W_{r}^{j_{1}} - W_{\underline{r}_{n}}^{j_{1}}}^{p} ds \bigr) \leq c_{2} \frac{1}{n^{p}}
\end{align*}
Thus we overall obtain the existence of a constant $c \in (0,\infty)$ such that for all $n \in \N$ and $t \in [0,1]$ we have 
\begin{equation}
	\E(\sup_{0 \leq s \leq t} \norm{A_{s} - \hat{A}_{n,s} - U_{n,s} }^{p}) \leq c \Bigl[ \int_{0}^{t} \E(\norm{X_{s} - \eul_{n,\underline{s}_{n}}}^{p}) ds + \frac{1}{n^{p-\delta}}  \Bigr]
\end{equation}
Similarly we obtain the existence of a constant $c \in (0,\infty)$ such that for $n \in \N$ and $t \in [0,1]$ we have
\begin{align*}
	&\norm{\sigma(X_{t}) - \sigma(X_{n,\underline{t}_{n}}) - A_{\sigma}(\eul_{n,\underline{t}_{n}},W_{t}-W_{\underline{t}_{n}}) } \\
	& \leq \norm{\sigma(X_{t}) - \sigma(\eul_{n,t})} + \norm{\Bigl[\sigma_{j}(\eul_{n,t}) - \sigma_{j}(\eul_{n,\underline{t}_{n}}) - \partial \sigma_{j}(\eul_{n,\underline{t}_{n}})(\eul_{n,t}-\eul_{n,\underline{t}_{n}})\Bigr]_{j = 1,\dots,d}} \\
	& + \norm{\Bigl[ \partial\sigma_{j}(\eul_{n,\underline{t}_{n}}) (\eul_{n,t} - \eul_{n,\underline{t}_{n}} - \sigma(\eul_{n,\underline{t}_{n}})(W_{t} - W_{\underline{t}_{n}}))  \Bigr]_{j = 1,\dots,d}} \\
	& \leq c \Bigl(\norm{X_{t} - \eul_{n,t}} + \norm{\eul_{n,t} - \eul_{n,\underline{t}_{n}}}^{2} + \norm{\eul_{n,t} - \eul_{n,\underline{t}_{n}}} \one_{A_{\Delta}^{c}}(\eul_{n,t},\eul_{n,\underline{t}_{n}}) \\
	&+ (1+\norm{\eul_{n,\underline{t}_{n}}}) \Bigl(\frac{1}{n} + \sum_{j_{1},j_{2} = 1}^{d} \abs{J_{j_{1},j_{2}}^{n}(t)}\Bigr)\Bigr)
\end{align*}
Hence we obtain the existence of constants $c_{1},c_{2},c_{3} \in (0,\infty)$ such that for all $n \in \N$ and $t \in [0,1]$ we have
\begin{align} \label{sigcount}
	\begin{aligned}
	&\E\bigl(\sup_{0 \leq s \leq t} \norm{B_{t} - \hat{B}_{n,t}}^{p}\bigr) = \E\bigl(\sup_{0 \leq s \leq t} \norm{\int_{0}^{s} \sigma(X_{s}) - \sigma(\eul_{n,\underline{s}_{n}}) - A_{\sigma}(\eul_{n,\underline{s}_{n}},W_{s}-W_{\underline{s}_{n}}) dW_{s}}^{p}\bigr) \\
	& \leq c_{1} \E\Bigl(\int_{0}^{t} \norm{X_{s} - \eul_{n,s}}^{p} + \norm{\eul_{n,s}-\eul_{n,\underline{s}_{n}}}^{2p} ds + \int_{0}^{t} \norm{\eul_{n,s}-\eul_{n,\underline{s}_{n}}}^{2} \one_{A_{\Delta}^{c}}(\eul_{n,s},\eul_{n,\underline{s}_{n}}) ds^{\frac{p}{2}}\\
	&+ \int_{0}^{t} (1+\norm{\eul_{n,\underline{s}_{n}}})^{p} (\frac{1}{n^{p}} + \sum_{j_{1},j_{2}= 1}^{d} \abs{J_{j_{1},j_{2}}(s)}^{p})ds\Bigr) \\
	& \leq c_{2} \Bigl[  \E\bigl(\int_{0}^{t} \norm{X_{s}-\eul_{n,s}}^{p} ds\bigr) + \frac{1}{n^{p}} + \frac{1}{n^{\frac{3p}{4} - \delta}} \\
	&+ \int_{0}^{1}\E\bigl((1+\norm{\eul_{n,\underline{s}_{n}}})^{p}\bigr)\Bigl(\frac{1}{n^{p}} + \sum_{j_{1},j_{2} = 1}^{d} \E\bigl(\abs{\int_{\underline{s}_{n}}^{s} W_{r}^{j_{1}} - W_{\underline{r}_{n}}^{j_{1}}dW_{r}^{j_{2}}}^{p}\bigr)\Bigr) \Bigr] ds \\
	& \leq c_{3} \Bigl(\E\bigl(\int_{0}^{t} \norm{X_{s} - \eul_{n,s}}^{p} ds\bigr) + \frac{1}{n^{\frac{3p}{4} - \delta}}\Bigr)
	\end{aligned}
\end{align}
Thus there exists $c \in (0,\infty)$ such that for all $n \in \N$ and $t \in [0,1]$ we have
\begin{align*}
	\E(\sup_{0 \leq s \leq t} \norm{X_{s} - \hat{X}_{n,s}}^{p}) \leq c \Bigl[ \int_{0}^{t} \E(\sup_{0 \leq u \leq s} \norm{X_{u} - \eul_{n,u}}^{p})ds + \frac{1}{n^{\frac{3p}{4} - \delta}} + \E(\sup_{0 \leq s \leq t} \norm{U_{n,s}}^{p}) \Bigr]
\end{align*}
We now prove, that for all $p \in \N$ there exists $c \in (0,\infty)$ such that for all $n \in \N$ we have 
\begin{equation} \label{finbd}
	\E(\sup_{0 \leq s \leq t} \norm{U_{n,s}}^{p}) \leq \frac{c}{n^{p}}
\end{equation}
Then an application of Grönwalls inequality will finish the proof.
For $n \in \N$, $l \in \{0,\dots,n-1 \}$ and $s \in [\frac{l}{n},\frac{l+1}{n}]$ we have
\begin{align*}
	U_{n,s} = U_{n,\frac{l}{n}} + \int_{\frac{l}{n}}^{s}(\partial \mu \sigma)(\eul_{n,\underline{t}_{n}}) (W_{t} - W_{\underline{t}_{n}}) dt = U_{n,\frac{l}{n}} + (\partial\mu \sigma)(\eul_{n,\frac{l}{n}})\int_{\frac{l}{n}}^{s} W_{t} - W_{\frac{l}{n}} dt
\end{align*}
Thus $(U_{n,\frac{l}{n}})_{l = 0,\dots,n}$ is a time discrete martingale.
Now there exists a constant $c \in (0,\infty)$ such that for $n \in \N$ we have
\begin{align*}
	&\sup_{0 \leq s \leq 1} \norm{U_{n,s}} \leq \max_{l = 0,\dots,n-1} \norm{U_{n,\frac{l}{n}}} + \max_{l = 0,\dots, n-1} \norm{(\partial \mu \sigma)(\eul_{n,\frac{l}{n}})} \int_{\frac{l}{n}}^{\frac{l+1}{n}} \norm{W_{t}- W_{\frac{l}{n}}} dt \\
	&\leq \max_{l = 0,\dots, n-1} \norm{U_{n,\frac{l}{n}} } + c (1+ \norm{\eul_{n}}_{\infty}) \max_{l = 0,\dots,n-1} \int_{\frac{l}{n}}^{\frac{l+1}{n}} \norm{W_{t} - W_{\frac{l}{n}}} dt.
\end{align*}
Now for all $q\in [1,\infty)$ there exists $c \in (0,\infty)$ such that for all $n \in \N$
\[
	\E\bigl(\int_{\frac{l}{n}}^{\frac{l+1}{n}} \norm{W_{u} - W_{\frac{l}{n}}} du^{q}\bigr) \leq \E\bigl(\frac{1}{n^{q-1}} \int_{\frac{l}{n}}^{\frac{l+1}{n}} \norm{W_{u} - W_{\frac{l}{n}}}^{q} du\bigr) \leq \frac{c}{n^{\frac{3q}{2}}}
\]
Thus we obtain the existence of constants $c_{1},c_{2},c_{3} \in (0,\infty)$ such that for all $n \in \N$ we have
\begin{align*}
	&\E(\max_{l = 0,\dots,n} \norm{U_{n,\frac{l}{n}}}^{p}) \leq \E\Bigl(\sum_{l = 0}^{n-1} \bigl(\norm{(\partial\mu \sigma)(\hat{X}_{n,\frac{l}{n}})} \int_{\frac{l}{n}}^{\frac{l+1}{n}} \norm{W_{u} - W_{\frac{l}{n}}}du\bigr)^{2}\Bigr)^{\frac{p}{2}} \\
	&\leq c_{1} \E\bigl(1+\norm{\eul_{n}}_{\infty}^{2p}\bigr)^{\frac{1}{2}} \; \E\Bigl( \bigl(\sum_{l = 0}^{n-1} (\int_{\frac{l}{n}}^{\frac{l+1}{n}} \norm{W_{u} - W_{\frac{l}{n}}} du)^{2}\bigr)^{p}\Bigr)^{\frac{1}{2}} \\
	& \leq c_{2} \Bigl(\sum_{l = 0}^{n-1} \E\bigl(\int_{\frac{l}{n}}^{\frac{l+1}{n}} \norm{W_{u} - W_{\frac{l}{n}}} du^{2p}\bigr)^{\frac{1}{p}}\Bigr)^{\frac{p}{2}} \leq c_{3} \frac{1}{n^{p}}
\end{align*}
Which concludes the proof of equation \eqref{l3}.
To prove \eqref{l4} we simply note that in the case $\Delta = \emptyset$ we have $A_{\Delta}^{c} = \emptyset$ and so
\[
	\int_{0}^{t} \norm{\eul_{n,s}-\eul_{n,\underline{s}_{n}}}^{2} \one_{A_{\Delta}^{c}}(\eul_{n,s},\eul_{n,\underline{s}_{n}}) ds^{\frac{p}{2}} = 0
\]
Inserting this into \eqref{sigcount} we obtain the existence of a constant $c \in (0,\infty)$ such that for all $n \in \N$ and $t \in [0,1]$ we have
\begin{align*}
	&\E\bigl(\sup_{0 \leq s \leq t} \norm{B_{t} - \hat{B}_{n,t}}^{p}\bigr) \leq c\; \E\bigl(\int_{0}^{t} \norm{X_{s} - \eul_{n,s}}^{p} ds\bigr) + \frac{1}{n^{p}}
\end{align*}
Following the steps of the proof of the first part after \eqref{sigcount} then yields \eqref{l4}.


Next we provide a sufficient condition under which the transformed Milstein scheme only depends on point evaluations of the underlying Brownian Motion.
We say that a function $f \colon \R^{d} \to \R^{d\times d}$ which is $C^{1}$ on $\R^{d}\setminus M$ fulfils the commutativity condition outside $M\subseteq \R^{d}$ if for all $j_{1},j_{2} \in \{1,\dots,d \}$ and $x \in \R^{d} \setminus M$ we have
\begin{equation*}
	f_{j_{1}}'(x) f_{j_{2}}(x) = f_{j_{2}}'(x) f_{j_{1}}(x)
\end{equation*}
\begin{Lem} \label{com}
	Assume that the coefficients $\mu$ and $\sigma$ fulfil conditions (A) and (B) and that additionally $\sigma$ fulfils the commutativity condition outside $\Theta$.
	Then the function $\sigma_{G}$ also fulfils the commutativity condition outside $\Theta$.
\end{Lem}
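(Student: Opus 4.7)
The plan is to reduce the commutativity of $\sigma_G$ at a point $x\in\R^d\setminus\Theta$ to the commutativity of $\sigma$ at the preimage $y=G^{-1}(x)$, exploiting two structural facts: $G$ is the identity on $\Theta$ (so $G^{-1}$ preserves $\R^d\setminus\Theta$ and $\sigma$ remains $C^1$ at $y$), and for each $i\in\{1,\dots,d\}$ the Hessian $G_i''(y)$ is a symmetric matrix, since $G_i$ is $C^2$ on $\R^d\setminus\Theta$ by Proposition~\ref{grep}(iii). First I would fix $x\in\R^d\setminus\Theta$, set $y=G^{-1}(x)\in\R^d\setminus\Theta$, recall from Proposition~\ref{grep}(viii) that $\sigma_G$ is $C^1$ at $x$, and note that
\[
(\sigma_G)_j(x) = G'(y)\,\sigma_j(y), \qquad (G^{-1})'(x) = \bigl(G'(y)\bigr)^{-1}.
\]
By the chain rule applied to $(\sigma_G)_{j_1} = \bigl(G'\sigma_{j_1}\bigr)\circ G^{-1}$, and multiplication by $(\sigma_G)_{j_2}(x)=G'(y)\sigma_{j_2}(y)$, the factor $(G'(y))^{-1}G'(y)$ collapses and one obtains the clean identity
\[
(\sigma_G)_{j_1}'(x)\,(\sigma_G)_{j_2}(x) = \bigl[G'(\cdot)\,\sigma_{j_1}(\cdot)\bigr]'(y)\,\sigma_{j_2}(y).
\]

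Next I would expand the right-hand side via the product rule, working componentwise to avoid matrix bookkeeping: the $i$-th component of $G'(\cdot)\sigma_{j_1}(\cdot)$ is the scalar $\sum_m \partial_m G_i(\cdot)\,\sigma_{j_1,m}(\cdot)$, and differentiating this scalar produces exactly
\[
\bigl[(\sigma_G)_{j_1}'(x)\,(\sigma_G)_{j_2}(x)\bigr]_i = \sigma_{j_2}(y)^\top G_i''(y)\,\sigma_{j_1}(y) + G_i'(y)\,\sigma_{j_1}'(y)\,\sigma_{j_2}(y).
\]
Swapping $j_1$ and $j_2$ and subtracting, the first summand cancels by symmetry of $G_i''(y)$, while the second summand $G_i'(y)\bigl(\sigma_{j_1}'(y)\sigma_{j_2}(y)-\sigma_{j_2}'(y)\sigma_{j_1}(y)\bigr)$ vanishes by the commutativity hypothesis on $\sigma$ at $y\in\R^d\setminus\Theta$. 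Running this for every $i$ yields $(\sigma_G)_{j_1}'(x)(\sigma_G)_{j_2}(x) = (\sigma_G)_{j_2}'(x)(\sigma_G)_{j_1}(x)$ for every $x\in\R^d\setminus\Theta$, which is the claim.

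The only real hurdle is the index bookkeeping in the product rule when one of the factors is itself a Jacobian; writing the product out as a scalar sum before differentiating, as above, makes both the emergence of $G_i''$ and the correct contraction $G_i'\,\sigma_j'\,\sigma_k$ transparent and removes any row/column ambiguity. Everything else — the fact that $G^{-1}$ preserves $\R^d\setminus\Theta$, the symmetry of $G_i''(y)$, and the $C^1$-regularity needed to invoke the chain and product rules at $y$ — is already contained in Proposition~\ref{grep}, so no further analytic work is required.
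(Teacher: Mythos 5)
Your argument is correct and follows essentially the same route as the paper's own proof: both use $G^{-1}(\Theta)=\Theta$ and $(G^{-1})'=(G')^{-1}\circ G^{-1}$ to collapse the inverse Jacobian, then expand componentwise via the product rule and cancel the Hessian term by symmetry and the remaining term by the commutativity hypothesis on $\sigma$. The only difference is presentational, as you write out the $i$-th component as a scalar sum rather than stacking the matrix identity over $i$, but the computation is identical.
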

\begin{proof}
	Let $j_{1},j_{2} \in \{1,\dots,d \}$. For $x \in \R^{d}\setminus \Theta$, keeping in mind that $G^{-1}(\Theta)= \Theta$, that $(G^{-1})' = (G')^{-1} \circ G^{-1}$ and that Hessians are symmetric, the product rule yields
	\begin{align*}
		&  (((\sigma_{G})')_{j_{1}}(\sigma_{G})_{j_{2}})(x) = (((G'\sigma_{j_{1}})\circ G^{-1})' (G'\sigma_{j_{2}})\circ G^{-1})(x) \\
		& = \Bigl[ \bigl[(\sigma_{j_{1}}^{\top} (G_{i})'' + (G_{i})'(\sigma_{j_{1}})') \circ G^{-1}\bigr][(G')^{-1}\circ G^{-1}] [G' \circ G^{-1}][\sigma_{j_{2}} \circ G^{-1}] \Bigr]_{i = 1}^{d}(x) \\
		& = \Bigl[ \bigl[\sigma_{j_{1}}^{\top} (G_{i})'' \sigma_{j_{2}} \bigr] \circ G^{-1} + [(G_{i})'(\sigma_{j_{1}})'\sigma_{j_{2}}]\circ G^{-1}\Bigr]_{i = 1}^{d}(x) \\
		& = \Bigl[ \bigl[\sigma_{j_{2}}^{\top} (G_{i})'' \sigma_{j_{1}} \bigr] \circ G^{-1} + [(G_{i})'(\sigma_{j_{2}})'\sigma_{j_{1}}]\circ G^{-1}\Bigr]_{i = 1}^{d}(x) \\
		& = \Bigl[ \bigl[(\sigma_{j_{2}}^{\top} (G_{i})'' + (G_{i})'(\sigma_{j_{2}})') \circ G^{-1}\bigr][(G')^{-1}\circ G^{-1}] [G' \circ G^{-1}][\sigma_{j_{1}} \circ G^{-1}] \Bigr]_{i = 1}^{d}(x) \\
		& = (((G'\sigma_{j_{2}})\circ G^{-1})' (G'\sigma_{j_{1}})\circ G^{-1})(x) = (((\sigma_{G})')_{j_{2}}(\sigma_{G})_{j_{1}})(x).
	\end{align*}
\end{proof}
\begin{Rem}
	It is very easy to see that if  $f \colon \R^{d} \to \R^{d\times d}$ which is $C^{1}$ on $\R^{d}\setminus M$ fulfils the commutativity condition outside $M$, then we also have
	\[
	\partial f_{j_{1}} f_{j_{2}} = \partial f_{j_{2}} f_{j_{1}}
	\]
	where $\partial f_{j}(x) = f_{j}'(x)$ if $x \in \R^{d}\setminus M$ and $\partial f_{j} = 0$, otherwise.
\end{Rem}
\subsection{Proof of Theorem~\ref{Thm1}}
Theorem~\ref{Thm2} is proven in much the same way as Theorem 2 in~\cite{MGY20}. For convenience of the reader we present the proof.
Due to Corolary \ref{ItoE} the stochastic process $G(X)$ is strong solution of an SDE with coefficents $\mu_{G}$ and $\sigma_{G}$, which fulfil the conditions (C) and (D) by Corollary \ref{gprop}. Using the Lipschitz continuity of $G^{-1}$ (see Proposition \ref{grep}(ii)) 
and Theorem \ref{Thm2} we obtain that there exist $c_{1},c_{2} \in (0,\infty)$ such that
\[
	\E(\norm{X - G^{-1}(\hat{Z}_{n})}_{\infty}^{p})^{\frac{1}{p}} \leq c_{1} \E(\norm{Z - \hat{Z}_{n}}_{\infty}^{p})^{\frac{1}{p}} \leq \frac{c_{2}}{n^{\frac{3}{4}-\delta}}
\]
Where $Z$ is the strong solution of the SDE
\begin{equation}\label{trafsde}
	\begin{aligned}
		dX_{t} &= \mu_{G}(X_{t})dt + \sigma_{G}(X_{t})dW_{t}, \quad t \in [0,1], \\
		X_{0} &= x_{0},
	\end{aligned}
\end{equation}
and $\hat{Z}_{n}$ is the quasi-Milstein-scheme associated with \eqref{trafsde} with step-size $\frac{1}{n}$.
The statement about the commutative setting follows immediately from Lemma \ref{com}.

\section{Examples} \label{Exmpl}
We now introduce a way to construct a variety of examples of SDEs which fulfil conditions (A) and (B). This is an adaptation of the Example from Section 4 in \cite{MGYR24} to fit our setting.
\begin{exs} \label{exgen}
	Let $\Theta \subseteq \R^{d}$ be a compact, orientable $C^{5}$-hypersurface. Let $n \in \N$ and let $K_{1}, \dots, K_{n}$ be open, disjoint sets such that
	$\R^{d}\setminus \Theta = \bigcup_{i = 1}^{n} K_{i}$. Let $f_{0},f_{1},\dots,f_{n} \colon \R^{d} \to \R^{d}$ be functions such that
	\begin{itemize}
		\item[i)] $(f_{0})_{|\Theta}$ is bounded,
		\item[ii)] there exists an open set $U \subseteq \R^{d}$ such that $\Theta \subseteq U$ and such that $(f_{i})_{|U}$ is $C^{4}$,
		\item[iii)] $(f_{i})_{|K_{i}}$ is $C^{1}$
		\item[iv)] $(f_{i})_{|K_{i}}$ and $(f_{i})'_{|K_{i}}$ are Lipschitz continuous
	\end{itemize}
	for all $i \in \{1,\dots,n \}$.
	Moreover let
	$\sigma \colon \R^{d} \to \R^{d \times d}$
	be a function such that
	\begin{itemize}
	  \item[v)] $\sigma$ is Lipschitz continuous 
	  \item[vi)] there exists a normal vector $\nor \colon \Theta \to \R^{d}$ along $\Theta$ with $\nor^{\top}(x)\sigma(x)  \neq 0$ for all $x \in \Theta$.
	  \item[vii)] $\sigma$ is $C^{1}$ and for all $j \in \{1,\dots,d \}$ the function $\sigma_{j}'$ is Lipschitz continuous
	  \item[viii)] there exists an $\epsilon \in (0,\infty)$ for which $\sigma_{|\Theta^{\epsilon}}$ is $C^{4}$
	\end{itemize}
	Then the coefficients
	\begin{equation} \label{Exm1}
	\mu \colon \R^{d} \to \R^{d}, \, x \mapsto f_{0}(x) \one_{\Theta}(x) + \sum_{i = 1}^{n} f_{i}(x) \one_{K_{i}}(x)
	\end{equation}
	and $\sigma$ fulfil conditions (A) and (B).
\end{exs}
\begin{proof}
	Since $\Theta$ is a compact $C^{5}$-hypersurface, we have $\reach(\Theta) > 0$ by Proposition 14 in \cite{T2008}. Let $\nor \colon \Theta \to \R^{d}$ be a normal vector along $\Theta$ with $\nor^{\top}(x)\sigma(x) \neq 0$ for all $x \in \Theta$. Since $\Theta$ is a $C^{5}$ hypersurface, by Lemma \ref{normalreg0} the function $\nor$ is $C^{4}$ which implies (A)(i) as $\Theta$ is compact. 
	
	Note that the function $(\nor\circ \pr_{\Theta})^{\top} \sigma$ is continuous on $\Theta^{\epsilon}$ for $\epsilon \in (0,\reach(\Theta))$ by Lemma \ref{projdist}(i) and iv). By vi) and the compactness of $\Theta$ we hence have $\inf_{x \in \Theta} \|\normal(x)^\top \sigma(x)\| > 0$. Thus (A)(ii) holds.  
	
	Let $\epsilon \in (0,\reach(\Theta))$ such that viii) is fulfilled for $\epsilon$. By Propositions \ref{normalreg0} and  \ref{projdist}(i) the function $\nor \circ \pr_{\Theta} \colon \Theta^{\epsilon} \to \R^{d}$ is $C^{4}$. We now prove, that (A)(iii) is fulfilled. Let $x \in \Theta$. By Lemma \ref{connect0} there exists an open set $U\subset \R^d$ such that $x\in U$ and $U\cap\Theta$ is connected.
	Define $\phi \colon \Theta \times \R \to \R^{d}$ by
	\[
	\phi(y,h) = y+ h \nor(y)
	\]  
	and put
	\[
	B_{1} = \phi((U\cap\Theta) \times (0,\varepsilon)), \quad 
	B_{2} = \phi((U\cap\Theta) \times (-\varepsilon,0)).
	\]
	Since $U$ is open there exists $\delta \in (0,\varepsilon)$ such that $B_{\delta}(x)  \subset U$.
	Clearly, 
	\[
	\{y + h \nor(y) \mid y \in B_{\delta}(x)\cap \Theta, \; h \in (0,\delta) \} \subset B_{1}
	\]
	and
	\[
	\{y + h \nor(y) \mid y \in B_{\delta}(x)\cap \Theta, \; h \in (-\delta,0) \} \subset B_{2}.
	\]
	Since $\phi$ is continuous and $(U\cap\Theta) \times (0,\varepsilon)$ as well as $(U\cap\Theta) \times (-\varepsilon,0)$ are connected, the sets $B_{1}$ and $B_{2}$ are also connected. Moreover, by  Lemma~\ref{fed0} we have $B_1, B_2 \subset \R^{d}\setminus\Theta=\bigcup_{i = 1}^{n} K_{i}$. Thus there 
	exist $i,j \in \{1,\dots,d \}$ such that $B_{1} \subseteq K_{i}$ and $B_{2} \subseteq K_{j}$.
	Thus for $y \in B_{\delta}(x) \cap \Theta$ by ii) we have 
	\begin{align*}
	\lim_{h \downarrow 0}\frac{\mu(y-h\nor(y))- \mu(y+h\nor(y)))}{2 \|\sigma(y)^\top\nor(y)\|^2} = \frac{f_{j}(y) - f_{i}(y)}{2 \|\sigma(y)^\top\nor(y)\|^2}.
	\end{align*}
	Thus the function
	\[
	\tilde{\alpha} \colon \Theta  \to \R^{d}, \, x \mapsto \lim_{h\downarrow 0}\frac{\mu(x-h\nor(x))- \mu(x+h\nor(x))}{2 \|\sigma(x)^\top\nor(x)\|^2}.
	\]
	is well defined.
	Since $f_{j}$, $f_{i}$, $\sigma$ and $\nor \circ \pr_{\Theta}$ are all $C^{3}$ on $B_{\delta}(x)$ the function
	\[
	\tilde{\alpha}_{|B_{\delta}(x)} =  \Bigl(\frac{f_{j}(\cdot) - f_{i}(\cdot)}{2 \|\sigma(\cdot)^\top\nor(\pr_{\theta}(\cdot))\|^2}\Bigr)_{|B_{\delta(x)}}
	\]
	is also $C^{4}$.
	Hence there exists an open set $U \subseteq \R^{d}$ with $\Theta \subseteq U$ and a $C^{4}$-function $\alpha \colon U \to \R^{d}$ with $\alpha_{| \Theta} = \tilde{\alpha}$.
	Since $\Theta$ is compact thus (A)(iii) is fulfilled. 
	
	Since $\mu_{K_{i}} = (f_{i})_{|K_{i}}$ is $C^{1}$ according to iii) and $\R^{d}\setminus \Theta = \cup_{i= 1}^{n} K_{i}$ with $K_{i}$ open we obtain that $\mu_{|\R^{d}\setminus \Theta}$ is $C^{1}$. With vii) this yields that (A)(iv) is fulfilled.
	
	Furthermore, according to Lemma \ref{top1} and the compactness of $\Theta$ there exists $\delta \in (0,\epsilon)$ such that $\Theta^{\delta} \subseteq U$. Moreover the set $\cl(\Theta^{\delta}) \subset \Theta^{\epsilon}$ is compact, thus $f_{1},\dots,f_{n}$ are bounded on $\Theta^{\delta}$ due to ii). It is now easy to see that (A)(v) is fulfilled.
	
	To prove (A)(vi) note that for $i, j \in \{1,\dots,n\} $ with $i \neq j$ and $x\in K_{i}$, $y \in K_{j}$ there exists no continuous function $\gamma \colon [0,1] \to \R^{d}\setminus \Theta$ with $\gamma(0) = x$ and $\gamma(1) = y$, as $\gamma([0,1]) \subseteq \R^{d} \setminus \Theta$ then could not be connected. Thus for the intrinsic metric $\rho$ on $\R^{d} \setminus \Theta$ and $x,y \in \R^{d}\setminus \Theta$ with $\rho(x,y) < \infty$ there exists a $j \in \{1,\dots,n\}$ such that $x,y \in K_{j}$. Thus the intrinsic Lipschitz continuity of $\mu_{|\R^{d}\setminus \Theta}$ and $\mu'_{|\R^{d}\setminus \Theta}$ follows from the Lipschitz continuity of $(f_{i})_{|K_{i}}$, $i \in \{1,\dots,n \}$ and $(f_{i})'_{|K_{i}}$, $i \in \{1,\dots,n \}$. With (vii), condition A(vi) follows. Clearly (B) is fulfilled.
\end{proof}

\section{Numerical results}\label{Num}
In this section we present numerical simulations for the performance
of the $L_p$-error  
\[
\varepsilon_{p,n}=\bigl(\EE\bigl[\|X_1-\eul_{n,1}\|^p\bigr]\bigr)^{1/p}
\]
of the Milstein scheme $\eul_n$ (see \eqref{MilSc}) with step-size $1/n$ at the final time point $1$.
We use $\eul_{N,1}$
with $N \in \N$ large as a reference estimate of $X_1$ and we approximate the $L_p$-error $\varepsilon_{p,n}$
by the 
corresponding
empirical $p$-th mean 
error
\begin{equation}\label{emperr}
	\widehat \varepsilon_{p,n}= \Bigl(\frac{1}{m} \sum_{i = 1}^{m} \|\eul_{N,1}^{i} - \eul_{n,1}^{i}\|^{p} \Bigr)^{1/p}
\end{equation}
based on $m$ 
Monte Carlo
repetitions
 $(\eul_{N,1}^{1}, \eul_{n,1}^{1}), \ldots, (\eul_{N,1}^{m}, \eul_{n,1}^{m})$ of 
 $(\eul_{N,1}, \eul_{n,1})$.

It has to be noted, that these numerical results do not directly illustrate our theoretical findings.
The implementation of the transformed Milstein-scheme suggested in the present article
requires direct computation of the inverse of the transform $G$, or numerical inversion of $G$, both of which turned
out to be infeasible for even the simple examples considered in this part.

However the numerical analysis of the Milstein-scheme for SDEs which fulfil assumptions (A) and (B) is useful to provide
evidence towards the following conjecture we have.

Assume that $\mu$ and $\sigma$ satisfy $(A)$ and $(B)$.
Then we conjecture that for every $p\in [1,\infty)$ and every $\delta\in (0,\infty)$ there exists $c\in(0, \infty)$ such that for all $n\in\N$, 
\begin{equation}
	\bigl(\EE\bigl[\|X-\eul_{n}\|_\infty^p\bigr]\bigr)^{1/p}\leq \frac{c}{n^{3/4-\delta}}. 
\end{equation} 

For our numerical experiments we consider a $2$-dimensional  SDE \eqref{SDE} with initial value $x_0\in\R^2$, drift coefficient  $\mu \colon \R^{2} \to \R^{2}$ given by
	\[
	\mu(x)= \begin{cases}
		\begin{pmatrix}
			-1 \\
			-1 \\ 
		\end{pmatrix}, & \text { if } \norm{x} < 2 \\[.6cm]
		\begin{pmatrix}
			1 \\
			1 \\
		\end{pmatrix}, & \text{ if } \norm{x} \geq 2
	\end{cases}
	\]
and diffusion coefficient
\[
\sigma \colon \R^{2} \to \R^{2 \times 2}, \; (x_{1},x_{2}) \mapsto  
\begin{pmatrix}
	x_{1} & x_{1} \\
	x_{2} & x_{2} 
\end{pmatrix}.
\]
Thus, the drift coefficient $\mu$ is discontinuous 
on the set
$\Theta = \{x \in \R^{2} \colon \norm{x} = 2 \}$ and the diffusion coefficient $\sigma$ fulfils the commutativity condition.

We show that $\mu$ and $\sigma$ satisfy the conditions (A) and (B). 
Note
that $\mu$ is of the form \eqref{Exm1} with 
$n=2$, the  sets $K_1$ and $K_2$ given by
\[
K_{1} = \{x \in \R^{2} \colon \norm{x} < 2 \},\qquad
K_{2} = \{x \in \R^{2} \colon \norm{x} > 2 \}
\]
and  the functions $f_0, f_1, f_2\colon\R^2\to\R^2$ given by
\begin{align*}
	f_{0}(x) =f_2(x)= \begin{pmatrix}
		1 \\
		1
	\end{pmatrix}, \qquad 
	f_{1} (x)= \begin{pmatrix}
		-1 \\
		-1
	\end{pmatrix}.
\end{align*}
Clearly,  $\Theta$ is a compact $C^{\infty}$-hypersurface,  $K_{1}$ and $K_{2}$ are open and disjoint sets covering $\R^{2}$ and the functions $f_0,f_1$ and $f_2$ fulfil the conditions (i)--(iv) 
from
Example \ref{exgen}. Moreover, clearly $\sigma$ is  $C^{\infty}$ and Lipschitz continuous with Lipschitz continuous derivative.
Thus the conditions (v),(vii) and (viii) from Example \ref{exgen} hold. Since $\sigma$ has compact support, condition (iv) from Example \ref{exgen} holds as well. 
Finally, 
$\nor \colon \Theta \to \R^{2}, \, x \mapsto \frac{1}{2}x$, is a normal vector along $\Theta$ and for all $x \in \Theta$ we have
\[
\nor(x)^\top\sigma(x) = 	\begin{pmatrix}
	x^\top x & x^\top x \\
\end{pmatrix}  \neq 0,
\]
which proves the condition (vi) from Example \ref{exgen}. \\
\hspace*{0.5cm}
We now choose $n = 2^7,2^8,\dots,2^{15}$, $N = 2^{17}$  and $m = 10^6$. Furthermore we select $x_{0} = (0,2)^\top$ as our initial value, i.e. the SDE \eqref{SDE} starts at time $0$ at a point of discontinuity  of $\mu$. Figure \ref{fig2} shows the plot of
a realization of
 the  empirical $L_2$-error $\widehat \varepsilon_{2,n}$  of the method versus the number of time-steps $n$ for the Euler-Maruyama scheme on the left hand side and for the Milstein-scheme on the right hand side. The empirical $L_2$-error rate for Euler  is  $0.53$ and for Milstein it is $0.82$.\\
\begin{figure}[H]
	\centering
	\caption{Empirical $L_2$-error vs. number of time steps: $a = -1, b = 1, x_{0} = (0,2)^\top$}
	\makebox[\textwidth]{\includegraphics[scale=0.23]{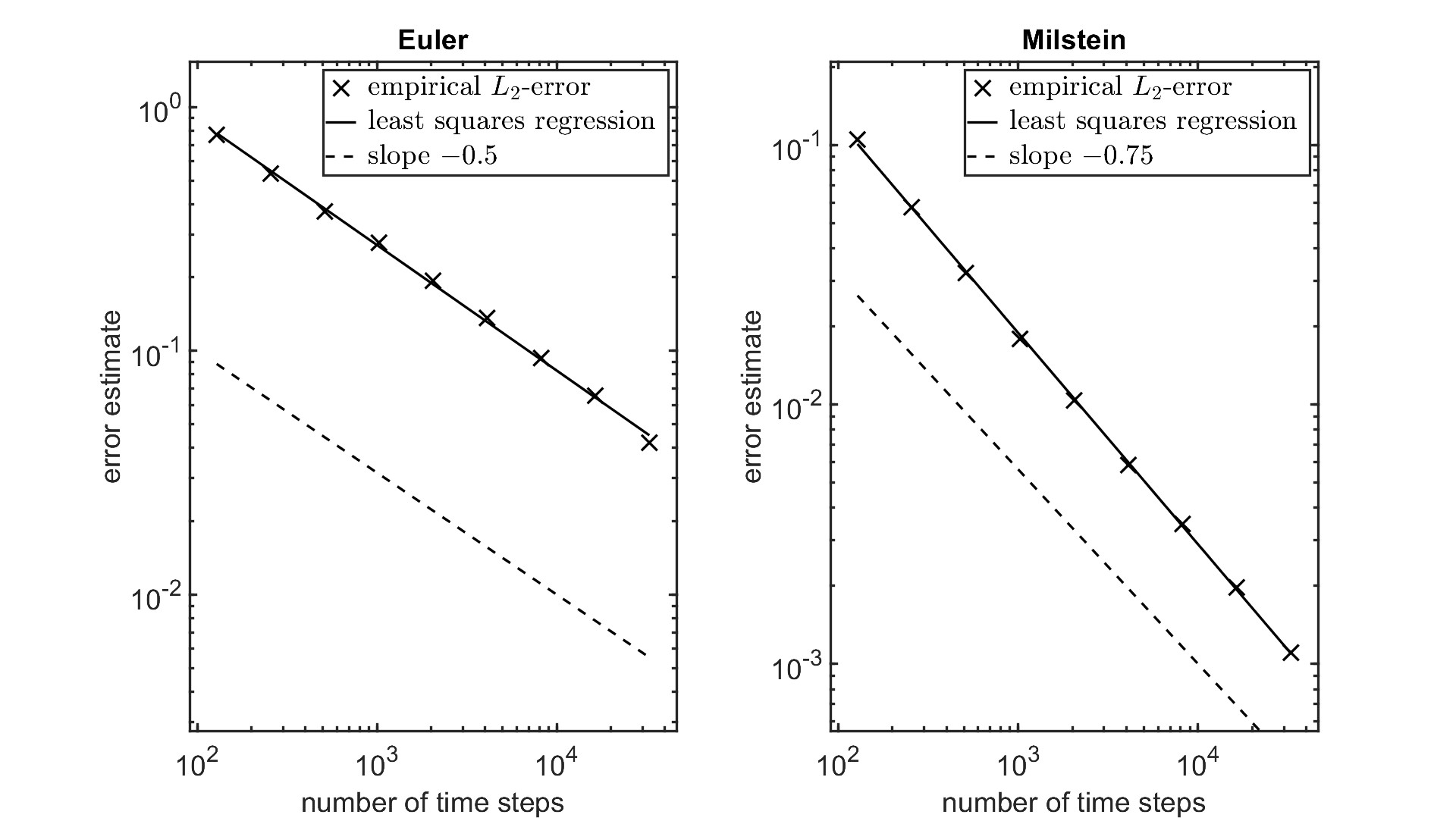}}
	\label{fig2}
\end{figure}
For the SDE currently under consideration,  the empirical $L_p$-error rate decreases 
with increasing $p$, see Table \ref{tab2}. A similar behaviour was observed for the Euler-Maruyama scheme under
assumptions weaker than (A) and (B) in \cite{MGYR24}. We refer to this article for possible explanations of this behaviour. 
\begin{table}[H] 
\label{tab2}
\caption {Empirical $L_p$-error rates for different $p$}
\begin{center}
\begin{tabular}{|c|c|c|c|c|}
\hline
	$p$ & 1 & 2 & 4 & 8 \\
	\hline
	Euler & 0.53 & 0.51 & 0.49 & 0.48 \\ \hline
	Milstein & 0.84 & 0.82 & 0.79 &  0.79   \\
	\hline
\end{tabular}
\end{center}
\end{table}
It can be seen that the observed $L_{p}$-error rates lie above $3/4$, giving some hint towards our conjecture being true.
\subsection*{Acknowledgements:}

The author would like to thank Thomas Müller-Gronbach and Larisa Yaroslavtseva
for useful critiques, significantly improving this article.

\section{Appendix - Basic facts on hypersurfaces}\label{Appendix}

The article \cite{MGYR24} includes an appendix which covers a variety  of known basic facts on hypersurfaces, distance functions, normal vectors, projections and intrinsic Lipschitz continuity which are also used in this text, mainly in section \ref{Proofs}.
We provide a list of the results used in this text here for the convenience of the reader and refer to \cite{MGYR24} and the references therein for proofs of these results.
\begin{Lem} \cite[Lemma 21]{MGYR24} \label{connect0} 

	Let  $d\in\N$, let 
	$\emptyset\not=M \subset \R^{d}$ be a $C^{0}$-hypersurface, let $x \in M$ and let $A\subset \R^d$ be open with $x\in A$. Then there exists an open set $\widetilde A\subset A$ with $x\in \widetilde A$ such that $\widetilde A\cap M$ is connected. 
\end{Lem}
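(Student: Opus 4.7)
The plan is to exploit the local coordinate description of a $C^{0}$-hypersurface at the point $x$. By the definition used in \cite{MGYR24}, there is an open neighborhood $U \subset \R^{d}$ of $x$ and a homeomorphism $\phi \colon U \to V$ onto some open $V \subset \R^{d}$ that flattens $M$, in the sense that $\phi(U \cap M) = V \cap (\R^{d-1} \times \{0\})$. The entire proof boils down to transporting a connected neighborhood from this model situation back to $\R^{d}$.

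First I would shrink the chart to make it lie inside $A$: replace $U$ by the smaller open set $U \cap A$, which is still a neighborhood of $x$, and observe that $\phi(U \cap A)$ is open in $V$ and contains $\phi(x)$. Hence there exists $r > 0$ such that the open Euclidean ball $B_{r}(\phi(x))$ is contained in $\phi(U \cap A)$. Set
\[
\widetilde A := \phi^{-1}(B_{r}(\phi(x))).
\]
Since $\phi$ is a homeomorphism and $B_{r}(\phi(x))$ is open, $\widetilde A$ is open in $\R^{d}$; by construction $x \in \widetilde A$ and $\widetilde A \subset U \cap A \subset A$.

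Finally, $\phi$ restricts to a homeomorphism between $\widetilde A \cap M$ and $B_{r}(\phi(x)) \cap (\R^{d-1} \times \{0\})$, and the latter set is an open $(d-1)$-dimensional ball, hence convex and in particular connected. Therefore $\widetilde A \cap M$ is connected as the continuous image of a connected set under $\phi^{-1}$, completing the construction.

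The only real obstacle is conceptual rather than technical, namely unpacking exactly which notion of "$C^{0}$-hypersurface" is in force and verifying that a local flattening homeomorphism (or, equivalently, a local graph representation) is available at every point of $M$. Once any such local model is in hand, the remainder of the argument is a direct translation of the elementary fact that $\R^{d}$ admits a basis of connected open neighborhoods at every point, combined with the preservation of connectedness under homeomorphisms.
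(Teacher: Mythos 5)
Your argument is correct and is the standard chart-based proof of this fact; note that the paper itself only cites \cite[Lemma~21]{MGYR24} without reproducing the argument, so there is no in-text proof to compare against. The one thing worth making explicit, as you yourself flag, is that whichever definition of ``$C^{0}$-hypersurface'' is in force must supply the local flattening homeomorphism $\phi$ (equivalently a local continuous graph representation, which after a shear $(u,v)\mapsto(u,v-g(u))$ gives exactly such a $\phi$); once that is granted, pulling back a small ball $B_{r}(\phi(x))\subset\phi(U\cap A)$ and using that $B_{r}(\phi(x))\cap(\R^{d-1}\times\{0\})$ is convex, hence connected, and that connectedness is preserved under the homeomorphism $\phi^{-1}$, is exactly right.
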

\begin{Lem} \cite[Lemma 25]{MGYR24} \label{fed0}
	Let $d\in\N$, let  
	$\emptyset\neq M \subset \R^{d}$ 
	 be a $C^{1}$-hypersurface of positive reach and let
	$x \in M$. If $u \in T_{x}(M)^{\perp}$ 
	and
	$\norm{u} < \reach(M)$ then  $x+u \in \text{Unp}(M)$ and $\text{pr}_{M}(x+u) = x$.
\end{Lem}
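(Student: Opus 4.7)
The plan is to separate the two assertions: first $x+u \in \unp(M)$, which is essentially a tautology from the definition of $\reach(M)$; then the projection identification $\pr_M(x+u) = x$, which requires genuine input from positive reach. Throughout, I may assume $u \neq 0$, since the case $u = 0$ is trivial. Write $y := x+u$, $r := \reach(M)$, and $\nu := u/\|u\|$.

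For the first assertion, I observe that $d(y,M) \le \|y - x\| = \|u\| < r$, so any choice of $\varepsilon \in (\|u\|, r)$ gives $y \in M^{\varepsilon}$. By the defining supremum
$r = \sup\{\varepsilon \ge 0 : M^{\varepsilon} \subset \unp(M)\}$, this $M^{\varepsilon}$ lies in $\unp(M)$, so $y \in \unp(M)$.

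For the second assertion I would establish the pointwise global estimate
\[
\|w - y\|^2 \ge \|u\|^2 \quad \text{for every } w \in M,
\]
with equality if and only if $w = x$. By uniqueness of the nearest point in $\unp(M)$, this forces $\pr_M(y) = x$. Expanding, $\|w - y\|^2 - \|u\|^2 = \|w-x\|^2 - 2\langle w - x, u\rangle$, so the claim reduces to showing $2\langle w - x, u\rangle \le \|w - x\|^2$ for all $w \in M$, strictly when $w \ne x$. The crucial input is the Federer curvature inequality for $C^1$-hypersurfaces of positive reach: for every $x,w \in M$ and every unit vector $\nu \in T_x(M)^\perp$,
\[
|\langle w - x, \nu\rangle| \le \frac{\|w - x\|^2}{2r}.
\]
Substituting $\nu = u/\|u\|$ and multiplying by $2\|u\|$ gives
\[
2\langle w - x, u\rangle \le \frac{\|u\|}{r}\|w - x\|^2,
\]
and since $\|u\|/r < 1$ this is strictly less than $\|w-x\|^2$ whenever $w \ne x$ (with both sides vanishing at $w=x$). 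Plugging back yields the required strengthened inequality.

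The main obstacle is the Federer curvature inequality itself, which is the only step that genuinely uses positive reach rather than just its definition. A direct proof proceeds via the osculating ball: one shows that for every $s \in (0,r)$ the open ball $B(x + s\nu, s)$ is disjoint from $M$, which is equivalent to $\langle w - x, \nu\rangle \le \|w-x\|^2/(2s)$ for all $w \in M$; letting $s \uparrow r$ gives the inequality. The ball-disjointness is the hard geometric content and would be established by a sliding/continuity argument: for $t \in [0,s]$ the points $x + t\nu$ lie in $M^{s} \subset \unp(M)$, so the map $t \mapsto \pr_M(x+t\nu)$ is continuous with value $x$ at $t=0$, and any hypothetical intersection $w \in B(x+s\nu,s)\cap M$ would make $d(x+s\nu,M) < s$, contradicting that $x$ remains a candidate nearest point under the continuous deformation. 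This is the standard derivation in reach theory (going back to Federer, 1959), and since the lemma is imported verbatim from \cite{MGYR24} the proof in the paper would simply cite that source.
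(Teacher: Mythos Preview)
Your anticipation is exactly right: the paper does not prove this lemma at all. It is listed in the appendix with the explicit disclaimer that these results are stated ``for the convenience of the reader'' with proofs deferred to \cite{MGYR24} and the references therein. So there is no proof to compare against---only a citation.

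Your argument is correct and goes well beyond what the paper offers. The reduction of $\pr_M(x+u)=x$ to the quadratic inequality $2\langle w-x,u\rangle \le \|w-x\|^2$ is clean, and invoking the Federer curvature bound $|\langle w-x,\nu\rangle|\le \|w-x\|^2/(2r)$ is the right tool. The only soft spot is the final paragraph: the phrase ``contradicting that $x$ remains a candidate nearest point under the continuous deformation'' is more of a gesture than an argument. To make it airtight you would show that the set $\{t\in[0,s]:\pr_M(x+t\nu)=x\}$ is nonempty (contains $0$), closed (by continuity of $\pr_M$ on $M^\varepsilon$, cf.\ Lemma~\ref{projdist}(i)), and open in $[0,s]$ (because if $\pr_M(x+t_0\nu)=x$ then $d(x+t_0\nu,M)=t_0$, and the $1$-Lipschitz property of $d(\cdot,M)$ together with uniqueness forces the projection to stay at $x$ for nearby $t$). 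Since you flagged this as ``the hard geometric content'' and attributed it to Federer, the sketch is acceptable for a lemma that the paper itself declines to prove.
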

\begin{Lem} \cite[Lemma 28]{MGYR24} \label{projdist} 
	Let  $d\in\N$, let 
	$k \in \N \cup \{\infty \}$, let $\emptyset\not=M \subset\R^{d}$ be a $C^{k}$-hypersurface of positive reach and let $\eps\in (0,\reach(M))$. Then the following statements hold true.
	\begin{itemize}
		\item[(i)] $\pr_M $ is a $C^{k-1}$-mapping on $M^\eps$.
			\item[(ii)] 	 $d(\cdot,M)$ is a $C^{k}$-function on $M^\eps\setminus M$.
		\item[(iii)] If $k\ge 2$ then for all $x\in M^\eps$ and all $v\in\R^d$,
		\[
		(\pr_{M}'(x))v\in T_{ \text{pr}_{M}(x)}(M).
		\]
		In particular,
		\[
		(x-\pr_M(x))^\top \pr'_M(x) = 0.
		\]
	\end{itemize} 
\end{Lem}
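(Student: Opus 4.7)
The plan is to reduce the three assertions to: (a) a single application of the implicit function theorem giving the $C^{k-1}$-regularity of $\pr_M$; (b) an envelope-type identity $\nabla(d(\cdot,M)^2)(x) = 2(x - \pr_M(x))$ from which the $C^k$-regularity of $d(\cdot,M)$ off $M$ follows; and (c) a straightforward curve-differentiation for (iii).

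For (i), fix $x_0 \in M^\eps$ and set $p_0 = \pr_M(x_0)$. Choose a local $C^k$ parametrisation $\phi \colon U \to M$ of a neighbourhood of $p_0$ in $M$ with $\phi(u_0) = p_0$ and define the $C^{k-1}$ mapping
\[
F\colon U \times \R^d \to \R^{d-1}, \qquad F_i(u,x) = \langle x - \phi(u), \partial_i \phi(u) \rangle.
\]
By Lemma \ref{fed0}, $F(u_0, x_0) = 0$ since $x_0 - p_0$ lies in the normal space at $p_0$. The partial Jacobian $\partial_u F(u_0, x_0)$ equals the matrix $(-g_{ij}(u_0) + \langle x_0 - p_0, \partial_{ij}\phi(u_0)\rangle)_{i,j}$; the first fundamental form $g_{ij}$ is positive definite, and the perturbation has operator norm controlled by $\|x_0 - p_0\| < \eps < \reach(M)$, which by the standard positive-reach bound on principal curvatures ($\leq 1/\reach(M)$) keeps the matrix invertible. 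The implicit function theorem then yields, on a neighbourhood $W$ of $x_0$, a $C^{k-1}$ solution $u(x)$ with $F(u(x),x) = 0$; shrinking $W$ so that $\|x - \phi(u(x))\| < \reach(M)$, Lemma \ref{fed0} forces $\pr_M(x) = \phi(u(x))$ on $W$. Since $x_0$ was arbitrary, $\pr_M \in C^{k-1}(M^\eps)$.

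For (iii), assume $k \geq 2$ so that $\pr_M \in C^1$. Given $x \in M^\eps$ and $v \in \R^d$, the curve $\gamma(t) := \pr_M(x+tv)$ is $C^1$ and takes values in $M$ with $\gamma(0) = \pr_M(x)$, so $\gamma'(0) = \pr_M'(x)v$ is by definition a tangent vector to $M$ at $\pr_M(x)$. Since $x - \pr_M(x) \in T_{\pr_M(x)}(M)^\perp$ by Lemma \ref{fed0}, the pairing $(x-\pr_M(x))^\top \pr_M'(x)v$ vanishes for every $v$, giving the displayed identity. For (ii), with $k \geq 2$, differentiating $d(x,M)^2 = \|x - \pr_M(x)\|^2$ yields
\[
\nabla(d^2)(x) = 2(x - \pr_M(x)) - 2\pr_M'(x)^\top(x - \pr_M(x)) = 2(x - \pr_M(x))
\]
on $M^\eps$ by (iii). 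On $M^\eps \setminus M$, dividing by $2\,d(x,M) > 0$ gives $\nabla d(x,M) = (x - \pr_M(x))/d(x,M)$, which coincides with the unit normal at $\pr_M(x)$. Since $M$ is $C^k$ its unit normal field is locally $C^{k-1}$, and composing with the $C^{k-1}$ map $\pr_M$ shows $\nabla d(\cdot, M) \in C^{k-1}(M^\eps\setminus M)$, hence $d(\cdot, M) \in C^k(M^\eps \setminus M)$. The case $k=1$ is handled by the same formula $\nabla d = \nu \circ \pr_M$ read with $C^0$-regularity throughout.

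The main delicate point I expect is the invertibility check of $\partial_u F(u_0, x_0)$ when $x_0$ sits genuinely off $M$: one has to exploit that positive reach provides a uniform bound on the eigenvalues of the second fundamental form, so that the perturbation by $\langle x_0 - p_0, \partial_{ij}\phi\rangle$ does not destroy the positive definiteness of the first fundamental form. A secondary bookkeeping issue is the logical order of the three parts: (iii) must be proved before (ii) in order to justify dropping the $\pr_M'(x)^\top(x - \pr_M(x))$ term when computing $\nabla(d^2)$; the remaining arguments amount to routine gluing of the local IFT data using continuity of $\pr_M$ and uniqueness of nearest points on $\unp(M) \supset M^\eps$.
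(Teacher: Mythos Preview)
The paper does not prove this lemma; it is listed in the appendix as a known fact and the reader is referred to \cite{MGYR24} and the references therein. So there is no in-paper argument to compare against, and your proposal has to be judged on its own.

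Your route---implicit function theorem for (i), curve differentiation for (iii), and the envelope identity $\nabla(d^2)=2(x-\pr_M(x))$ for (ii)---is the standard one and is essentially correct for $k\ge 2$. Two small points deserve cleaning up. First, you invoke Lemma~\ref{fed0} twice to assert that $x-\pr_M(x)\in T_{\pr_M(x)}(M)^\perp$; but Lemma~\ref{fed0} is the \emph{converse} statement (normal displacement projects back). What you actually need is the first-order optimality condition for the nearest-point problem, which is elementary and does not require Lemma~\ref{fed0}. Second, and more substantively, the case $k=1$ is not covered by your argument as written: the IFT step needs $F$ to be at least $C^1$, hence $k\ge 2$, and your derivation of $\nabla(d^2)=2(x-\pr_M(x))$ uses (iii), which again presupposes $k\ge 2$. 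Your closing sentence for $k=1$ gestures at the right fix but does not supply it. For $k=1$ one should instead quote directly that for any closed set of positive reach the distance function is differentiable on $\unp(M)\setminus M$ with $\nabla d(x)=(x-\pr_M(x))/d(x,M)$ (Federer), and that $\pr_M$ is continuous on $\unp(M)$; these two facts immediately give (i) and (ii) in the $C^0$/$C^1$ regime without any differentiation of $\pr_M$.

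A minor simplification for (ii) when $k\ge 2$: once you have $\nabla(d^2)=2(x-\pr_M(x))\in C^{k-1}(M^\eps)$ you can conclude $d^2\in C^k(M^\eps)$ directly, and then $d=\sqrt{d^2}\in C^k(M^\eps\setminus M)$ since $\sqrt{\cdot}$ is smooth on $(0,\infty)$. This avoids the detour through the normal field and the attendant manifold-vs-ambient regularity bookkeeping.
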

\begin{Lem} \cite[Lemma 29]{MGYR24} \label{sides0}
	Let  $d\in\N$, let  
	$\emptyset\neq M \subset \R^{d}$ be a $C^{1}$-hypersurface of positive reach, let $\nor\colon M \to \R^{d}$ be a normal vector along $M$.
Then for all $\eps \in (0,\reach(M))$, the sets 
	\[Q_{\varepsilon, +} = \{x +\lambda \nor(x) \mid x \in M, \, \lambda \in (0,\eps)\}
	\] 
	and
	\[
	Q_{\varepsilon, -} = \{x+\lambda \nor(x) \mid x \in M,\, \lambda \in (-\eps,0) \}
	\] 
	are open and disjoint and we have $M^{\eps}\setminus M = Q_{\varepsilon, +} \cup Q_{\varepsilon, -}$.
\end{Lem}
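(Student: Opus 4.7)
The plan is to prove the four claims (disjointness, openness of each $Q_{\varepsilon,\pm}$, and $M^\varepsilon\setminus M = Q_{\varepsilon,+}\cup Q_{\varepsilon,-}$) by exploiting two standard consequences of $\varepsilon<\reach(M)$: every $z\in M^\varepsilon$ has a unique nearest point $\pr_M(z)$ in $M$, and by Lemma~\ref{fed0} normal rays of length $<\reach(M)$ are sent back to their foot point by $\pr_M$. The central object will be the signed normal coordinate
\[
\psi\colon M^\varepsilon\setminus M\to\R,\qquad \psi(z)=\langle z-\pr_M(z),\,\nor(\pr_M(z))\rangle,
\]
which will allow me to describe $Q_{\varepsilon,+}$ and $Q_{\varepsilon,-}$ as level sets of a continuous function.

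First I would verify disjointness. Suppose $z=x+\lambda\nor(x)=y+\mu\nor(y)$ with $x,y\in M$, $\lambda\in(0,\varepsilon)$, $\mu\in(-\varepsilon,0)$. Since $\nor(x)\in T_x(M)^\perp$ and $|\lambda|<\reach(M)$, Lemma~\ref{fed0} gives $z\in\unp(M)$ with $\pr_M(z)=x$; the same argument for the second representation gives $\pr_M(z)=y$, hence $x=y$, and then $\lambda=\mu$ from $\lambda\nor(x)=\mu\nor(x)$ and $\|\nor(x)\|=1$, a contradiction. The same argument shows that each $Q_{\varepsilon,\pm}$ lies in $M^\varepsilon\setminus M$ (the point has distance $|\lambda|\in(0,\varepsilon)$ from $M$) and that on $Q_{\varepsilon,\pm}$ the map $\psi$ equals the scalar $\lambda$ in the representation (so $\psi>0$ on $Q_{\varepsilon,+}$ and $\psi<0$ on $Q_{\varepsilon,-}$).

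Next I would establish the inclusion $M^\varepsilon\setminus M\subseteq Q_{\varepsilon,+}\cup Q_{\varepsilon,-}$. Fix $z\in M^\varepsilon\setminus M$, set $x:=\pr_M(z)\in M$ and $v:=z-x$, so $\|v\|=d(z,M)\in(0,\varepsilon)$. The key step is to show $v\perp T_x(M)$: given any tangent vector $w$ to $M$ at $x$, pick a $C^1$-curve $\gamma\colon(-\delta,\delta)\to M$ with $\gamma(0)=x$, $\gamma'(0)=w$; the function $t\mapsto\|z-\gamma(t)\|^2$ attains a minimum at $t=0$ (because $\pr_M(z)=x$), and differentiating yields $\langle v,w\rangle=0$. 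Since $M$ is a $C^1$-hypersurface, $T_x(M)^\perp=\R\cdot\nor(x)$, so $v=s\nor(x)$ for some $s\in\R$ with $|s|=\|v\|\in(0,\varepsilon)$, placing $z$ in $Q_{\varepsilon,+}$ or $Q_{\varepsilon,-}$ according to the sign of $s$.

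Finally, for openness, I would argue that $\psi$ extends continuously to all of $M^\varepsilon$ (with value $0$ on $M$) since $\pr_M$ is continuous on $M^\varepsilon\subset\unp(M)$ (this is the case $k=1$ of Lemma~\ref{projdist}(i), or a direct sequential-compactness argument using uniqueness of nearest points) and $\nor$ is continuous on $M$ by the definition of a normal vector. Then
\[
Q_{\varepsilon,+}=\{z\in M^\varepsilon:\psi(z)>0\},\qquad Q_{\varepsilon,-}=\{z\in M^\varepsilon:\psi(z)<0\}
\]
are open in $M^\varepsilon$, and $M^\varepsilon$ is open in $\R^d$, so $Q_{\varepsilon,\pm}$ are open in $\R^d$. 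The main obstacle will be the continuity of $\pr_M$ on $M^\varepsilon$: if one does not wish to invoke Lemma~\ref{projdist}(i), one must prove it from scratch by showing that any limit point of $\pr_M(z_n)$, with $z_n\to z$, is itself a nearest point of $z$ in $M$, and hence equals $\pr_M(z)$ by uniqueness on $\unp(M)$; everything else is bookkeeping around Lemma~\ref{fed0} and the nearest-point characterization.
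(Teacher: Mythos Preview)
The paper does not actually supply a proof of this lemma: it is listed in the appendix as a quoted result from \cite{MGYR24}, with the remark ``we refer to \cite{MGYR24} and the references therein for proofs of these results.'' So there is no in-paper argument to compare against.

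Your argument is correct and is the natural one. Disjointness and the inclusion $Q_{\varepsilon,\pm}\subset M^\varepsilon\setminus M$ come straight from Lemma~\ref{fed0}; the reverse inclusion follows from the first-order optimality condition at the nearest point (so $z-\pr_M(z)\in T_{\pr_M(z)}(M)^\perp=\R\,\nor(\pr_M(z))$); and openness is obtained by writing $Q_{\varepsilon,\pm}$ as the preimages $\{\psi>0\}$ and $\{\psi<0\}$ of the continuous signed-distance functional $\psi(z)=\langle z-\pr_M(z),\nor(\pr_M(z))\rangle$, using Lemma~\ref{projdist}(i) with $k=1$ for the continuity of $\pr_M$. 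The only point worth making explicit is that the identification $Q_{\varepsilon,+}=\{\psi>0\}$ uses all three earlier steps at once (the decomposition $M^\varepsilon=M\cup Q_{\varepsilon,+}\cup Q_{\varepsilon,-}$ together with $\psi=\lambda$ on $Q_{\varepsilon,\pm}$ and $\psi=0$ on $M$), which you do indicate but could state in one line for clarity.
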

\begin{Lem} \cite[Lemma 30]{MGYR24} \label{normalreg0}
	Let $d\in\N$, let 
$k \in \N \cup \{\infty \}$, let $\emptyset\not=M \subset\R^{d}$ be a $C^{k}$-hypersurface and let $\nor\colon M\to \R^d$ be a normal vector along $M$. Then $\nor$ is a $C^{k-1}$-function. Moreover, if $k\ge 2$ then for all $x \in M$ and all $v \in T_{x}(M)$ we have $\nor'(x)v \in T_{x}(M)$.
\end{Lem}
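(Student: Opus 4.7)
The plan is to work locally: since being $C^{k-1}$ and the tangent-space condition are both local properties on $M$, it suffices to verify them in a neighborhood of an arbitrary fixed point $x_0\in M$. By the definition of a $C^k$-hypersurface, I may pick an open set $V\subset \R^d$ with $x_0\in V$, an open set $U\subset \R^{d-1}$ and a $C^k$-parametrization $\varphi\colon U\to V\cap M$ which is a homeomorphism onto its image and whose Jacobian $\varphi'(u)\in\R^{d\times(d-1)}$ has rank $d-1$ for every $u\in U$. Recall also that for $u\in U$ the tangent space $T_{\varphi(u)}(M)$ coincides with the column span of $\varphi'(u)$.

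First I would construct a local unit normal $\widetilde{\nor}\colon V\cap M\to\R^d$ of class $C^{k-1}$. To do this, apply the Gram--Schmidt procedure to the columns of $\varphi'(u)$ (which depend in a $C^{k-1}$ way on $u$) to obtain an orthonormal basis $e_1(u),\dots,e_{d-1}(u)$ of $T_{\varphi(u)}(M)$ with $C^{k-1}$-dependence on $u$, then pick any fixed $w\in\R^d$ for which $w-\sum_{i=1}^{d-1}\langle w,e_i(u_0)\rangle e_i(u_0)\ne 0$ (for instance any standard basis vector not lying in $T_{\varphi(u_0)}(M)$) and set
\[
\widetilde{\nor}(\varphi(u)) = \frac{w-\sum_{i=1}^{d-1}\langle w,e_i(u)\rangle e_i(u)}{\bigl\|w-\sum_{i=1}^{d-1}\langle w,e_i(u)\rangle e_i(u)\bigr\|},
\]
which is well defined on a possibly smaller neighborhood of $x_0$ and is $C^{k-1}$ in $u$, hence $C^{k-1}$ as a function on that neighborhood of $x_0$ in $M$ via $\varphi^{-1}$. (Alternatively, in concrete low dimensions one may use a generalized cross product of the columns of $\varphi'$.)

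Next I would compare $\nor$ with $\widetilde{\nor}$. At every point $y\in V\cap M$ the vector $\nor(y)$ is a unit vector orthogonal to $T_y(M)$, and the orthogonal complement of $T_y(M)$ in $\R^d$ is one-dimensional, so $\nor(y)\in\{+\widetilde{\nor}(y),-\widetilde{\nor}(y)\}$. Since $\langle\nor,\widetilde{\nor}\rangle$ is continuous with values in $\{-1,+1\}$, it is locally constant on the connected components of $V\cap M$ (shrinking $V$ if necessary so that $V\cap M$ is connected, using Lemma~\ref{connect0}). Therefore $\nor=\pm\widetilde{\nor}$ throughout a neighborhood of $x_0$, which gives $\nor\in C^{k-1}$ locally, and hence globally on $M$.

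Finally, assume $k\ge 2$, so $\nor$ is at least $C^1$. Differentiating the identity $\|\nor(y)\|^2=1$ in the direction of any tangent vector $v\in T_x(M)$ (using a $C^1$ curve $\gamma\colon(-\eps,\eps)\to M$ with $\gamma(0)=x$, $\gamma'(0)=v$) yields
\[
0=\tfrac{d}{dt}\bigl|_{t=0}\|\nor(\gamma(t))\|^2=2\bigl\langle\nor(x),\nor'(x)v\bigr\rangle.
\]
Thus $\nor'(x)v$ is orthogonal to $\nor(x)$. Since $\nor(x)$ spans the one-dimensional orthogonal complement of $T_x(M)$, it follows that $\nor'(x)v\in T_x(M)$, completing the proof. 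The only mildly delicate point in the plan is ensuring that the two candidate signs of the locally constructed $\widetilde{\nor}$ match the given $\nor$ consistently across overlapping charts, which is handled by the local connectedness argument from Lemma~\ref{connect0} together with the continuity of $\nor$.
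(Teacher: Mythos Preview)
Your proof is correct and follows the standard approach: locally construct a $C^{k-1}$ unit normal from a chart via Gram--Schmidt, compare it with the given continuous unit normal $\nor$ using a local connectedness argument (Lemma~\ref{connect0}) to fix the sign, and then differentiate $\|\nor\|^2=1$ along a tangent curve to obtain $\nor'(x)v\perp\nor(x)$, hence $\nor'(x)v\in T_x(M)$.

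Note that the paper does not actually give its own proof of this lemma; it merely quotes the statement from \cite[Lemma~30]{MGYR24} and refers the reader there. So there is no ``paper's proof'' to compare against here, but your argument is the natural one and would serve perfectly well as a self-contained proof in place of the citation.
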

\begin{Lem} \cite[Lemma 33]{MGYR24} \label{Lipconnec0}
	Let  $d\in\N$, let
$\emptyset\neq A \subset \R^{d}$ and let $\rho_A$ be the intrinsic metric for $A$. Then   $\|x-y\| \leq \rho_A(x,y)$ for all $x,y \in A$ and $\|x-y\| = \rho_A(x,y)$ for all $x,y \in A$ with $\overline{x,y}\subset A$. In particular, if $A$ is convex then $\rho_A$ coincides with the Euclidean distance.
\end{Lem}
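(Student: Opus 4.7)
The plan is to prove the two stated inequalities separately, using only the definitions of the length $l(\gamma)$ and of the intrinsic metric $\rho_A$, and then deduce the convex case as a direct corollary.

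For the lower bound $\|x-y\|\le \rho_A(x,y)$, I would fix $x,y\in A$ and let $\gamma\colon[0,1]\to A$ be any continuous path with $\gamma(0)=x$ and $\gamma(1)=y$. The trivial partition $0=t_0<t_1=1$ yields the estimate
\[
\|x-y\|=\|\gamma(1)-\gamma(0)\|\le \sup\Bigl\{\sum_{k=1}^{n}\|\gamma(t_k)-\gamma(t_{k-1})\|\Bigr\}=l(\gamma),
\]
where the supremum is taken over all partitions of $[0,1]$. Taking the infimum over all admissible $\gamma$ gives $\|x-y\|\le \rho_A(x,y)$.

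For the matching upper bound under $\overline{x,y}\subset A$, the natural candidate is the straight-line parametrisation $\gamma_0\colon[0,1]\to A$, $t\mapsto (1-t)x+ty$, which is continuous and, by hypothesis, has image in $A$. For any partition $0\le t_0<\dots<t_n\le 1$ one computes
\[
\sum_{k=1}^{n}\|\gamma_0(t_k)-\gamma_0(t_{k-1})\|=\sum_{k=1}^{n}(t_k-t_{k-1})\|y-x\|\le\|y-x\|,
\]
so $l(\gamma_0)\le\|x-y\|$ and hence $\rho_A(x,y)\le\|x-y\|$. Combined with the lower bound this gives equality.

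The convex case follows immediately: if $A$ is convex then $\overline{x,y}\subset A$ for all $x,y\in A$, so the previous step applies and $\rho_A(x,y)=\|x-y\|$. I do not anticipate a genuine obstacle here; the only point requiring minor care is verifying that the straight-line path achieves length exactly $\|x-y\|$ (the inequality above is actually an equality, since taking $n=1$ already attains $\|x-y\|$ in the supremum defining $l(\gamma_0)$), which makes the argument completely elementary.
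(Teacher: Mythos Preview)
Your proof is correct and is precisely the standard elementary argument one would expect. The paper does not actually supply its own proof of this lemma; it merely cites \cite[Lemma~33]{MGYR24}, so there is nothing substantive to compare against beyond noting that your argument is the canonical one.
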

\begin{Lem} \cite[Lemma 34]{MGYR24} \label{lipimpl0} 
	Let $d, m \in \N$, 
	let $\emptyset \neq A \subset \R^{d}$ and let $f\colon A\to \R^{m}$ be a function.
	\begin{itemize}
		\item [(i)] If $f$ is Lipschitz continuous then $f$ is intrinsic Lipschitz continuous.
		\item[(ii)] If $A$ is convex then $f$ is intrinsic Lipschitz continuous with intrinsic Lipschitz constant $L$ if and only if $f$ is Lipschitz continuous with Lipschitz constant $L$.
		\item[(iii)] If $A$ is open and $f$ is  intrinsic Lipschitz continuous then $f$ is locally Lipschitz continuous and, in particular, $f$ is continuous.
	\end{itemize}  
\end{Lem}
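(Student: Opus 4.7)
The plan is to derive all three parts as direct consequences of the comparison between the Euclidean distance and the intrinsic metric established in Lemma~\ref{Lipconnec0}, which states that $\|x-y\| \le \rho_A(x,y)$ for all $x,y\in A$ with equality whenever the straight segment $\overline{x,y}$ lies in $A$. No deep geometry or analysis is needed; the entire lemma is an unpacking of the definitions combined with this one comparison.

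For part (i), suppose $f$ is Lipschitz continuous with constant $L$. For arbitrary $x,y\in A$, Lemma~\ref{Lipconnec0} yields $\|x-y\|\le \rho_A(x,y)$, so
\[
\|f(x)-f(y)\| \le L\,\|x-y\| \le L\,\rho_A(x,y),
\]
which shows that $L$ is also an intrinsic Lipschitz constant for $f$ on $A$. For part (ii), if $A$ is convex then $\overline{x,y}\subset A$ for all $x,y\in A$, hence Lemma~\ref{Lipconnec0} gives the equality $\rho_A(x,y) = \|x-y\|$. The defining inequalities for Lipschitz continuity with constant $L$ and intrinsic Lipschitz continuity with constant $L$ therefore coincide, proving both implications with the same constant.

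For part (iii), assume $A$ is open and $f$ is intrinsic Lipschitz continuous with some constant $L$. Fix $x\in A$ and choose $r\in(0,\infty)$ such that $B_r(x)\subset A$. The open ball $B_r(x)$ is convex, so for every $y\in B_r(x)$ the segment $\overline{x,y}$ lies in $A$, and Lemma~\ref{Lipconnec0} yields $\rho_A(x,y) = \|x-y\|$. Combining this with intrinsic Lipschitz continuity gives
\[
\|f(x) - f(y)\| \le L\,\rho_A(x,y) = L\,\|x-y\|
\]
for all $y\in B_r(x)$, which establishes local Lipschitz continuity at the arbitrary point $x\in A$, and in particular continuity of $f$ on $A$.

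There is no real obstacle in this argument; the only point that merits attention is being careful in part (iii) to use the convexity of the ball (rather than of $A$ itself) so that Lemma~\ref{Lipconnec0} applies to $\overline{x,y}$, and to note that local Lipschitz continuity on every ball $B_r(x)\subset A$ is exactly the local Lipschitz property asserted.
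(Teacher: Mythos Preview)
Your proof is correct and follows exactly the natural route via Lemma~\ref{Lipconnec0}; the paper itself does not give a proof of this lemma but refers to \cite{MGYR24}, where the argument is the same as yours. One cosmetic point in part~(iii): to obtain local Lipschitz continuity in the usual sense (that $f$ is Lipschitz on a full neighbourhood of $x$), you should state the estimate for arbitrary $y,z\in B_r(x)$ rather than only for the pair $(x,y)$---but this follows identically, since $\overline{y,z}\subset B_r(x)\subset A$ by convexity of the ball.
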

\begin{Lem} \cite[Lemma 36]{MGYR24} \label{top1}
	Let  $d\in \N$, let 
	$\emptyset \not=A\subset\R^d$
	be open and let $K\subset A$ be compact. Then there exists $\eps\in (0,\infty)$ such that $K^\eps \subset A$.
\end{Lem}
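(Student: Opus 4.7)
The plan is a standard compactness argument based on the distance function to the complement of $A$. First I would introduce the map $g \colon \R^d \to [0, \infty]$ defined by $g(x) = d(x, \R^d \setminus A)$, with the convention $d(x, \emptyset) = +\infty$. If $A = \R^d$ then any $\eps > 0$ works trivially, so I may assume $\R^d \setminus A$ is a nonempty closed set.

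Two elementary facts carry the argument. First, $g$ is $1$-Lipschitz on $\R^d$: for any $z \in \R^d \setminus A$, the triangle inequality gives $\|y-z\| \leq \|x-y\| + \|x-z\|$, and taking the infimum over $z$ yields $g(y) \leq g(x) + \|x-y\|$; by symmetry $|g(x)-g(y)| \leq \|x-y\|$, so $g$ is continuous on $\R^d$. Second, since $A$ is open and $K \subset A$, each $x \in K$ admits a ball $B_{r(x)}(x) \subset A$, which forces $g(x) \geq r(x) > 0$.

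Next, $K$ is compact and the restriction $g_{|K} \colon K \to (0, \infty)$ is continuous and strictly positive, so by the extreme value theorem the minimum
\[
\eps_0 \;:=\; \min_{x \in K} g(x) \;>\; 0
\]
is attained. I would then select any $\eps \in (0, \eps_0)$, for instance $\eps = \eps_0/2$.

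Finally, I would verify $K^\eps \subset A$: given $y \in K^\eps$, by definition there exists $x \in K$ with $\|x-y\| < \eps < \eps_0 \leq g(x) = d(x, \R^d \setminus A)$, which rules out $y \in \R^d \setminus A$ and therefore forces $y \in A$. There is no real obstacle here — the proof rests entirely on the Lipschitz continuity of the distance function, the openness of $A$, and compactness of $K$.
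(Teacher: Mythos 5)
Your proof is correct. The paper does not supply its own proof of this lemma — it cites \cite{MGYR24} — but your argument is the standard one: the distance-to-complement function $g(x)=d(x,\R^d\setminus A)$ is $1$-Lipschitz hence continuous, it is strictly positive on $K$ because $A$ is open, compactness gives a positive minimum $\eps_0$ over $K$, and then any $y\in K^\eps$ with $\eps<\eps_0$ lies within distance $<g(x)$ of some $x\in K$ and so cannot belong to $\R^d\setminus A$. All steps are sound; as a minor remark, $\eps=\eps_0$ itself already works since $K^{\eps_0}$ is defined via a strict inequality, so the extra halving is harmless but unnecessary.
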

\begin{Lem} \cite[Lemma 39]{MGYR24} \label{diffintr0} 
	Let $d,m\in \N$, let
	$\emptyset\neq A \subset \R^{d}$ be open and let $f\colon  A \to \R^{m}$ be differentiable with $\|f'\|_{\infty} < \infty$. Then $f$ is intrinsic Lipschitz continuous with intrinsic Lipschitz constant $\|f'\|_{\infty}$.
\end{Lem}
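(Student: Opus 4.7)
The plan is to bound $\|f(x)-f(y)\|$ by $\|f'\|_\infty \, l(\gamma)$ for every admissible curve $\gamma$ from $x$ to $y$ inside $A$, and then take the infimum over all such curves to get the intrinsic Lipschitz estimate. The main obstacle is that the competitor curves in the definition of $\rho_A$ are merely continuous (not $C^1$), so one cannot apply the fundamental theorem of calculus directly along $\gamma$. The workaround is to combine openness of $A$ with compactness of $\gamma([0,1])$ to reduce the problem to a finite chain of straight line segments on each of which the standard mean value inequality applies.

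Concretely, fix $x,y \in A$ and any continuous $\gamma\colon [0,1]\to A$ with $\gamma(0)=x$, $\gamma(1)=y$ and $l(\gamma)<\infty$ (otherwise the desired bound is vacuous). Since $\gamma([0,1])$ is compact and lies in the open set $A$, Lemma~\ref{top1} yields some $\varepsilon>0$ with $\gamma([0,1])^\varepsilon \subset A$. Uniform continuity of $\gamma$ then provides a partition $0=t_0<t_1<\dots<t_n=1$ with $\|\gamma(t_i)-\gamma(t_{i-1})\|<\varepsilon$ for every $i$, so each segment $\overline{\gamma(t_{i-1}),\gamma(t_i)}$ is contained in the convex ball $B_\varepsilon(\gamma(t_{i-1}))\subseteq A$. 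On such a segment $f$ is differentiable with derivative bounded by $\|f'\|_\infty$, so applying the fundamental theorem of calculus to $s\mapsto f(\gamma(t_{i-1})+s(\gamma(t_i)-\gamma(t_{i-1})))$ on $[0,1]$ gives
\[
\|f(\gamma(t_i))-f(\gamma(t_{i-1}))\| \;\leq\; \|f'\|_\infty\,\|\gamma(t_i)-\gamma(t_{i-1})\|.
\]

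Summing over $i$, invoking the triangle inequality on the left and the definition of $l(\gamma)$ on the right yields $\|f(x)-f(y)\| \leq \|f'\|_\infty\, l(\gamma)$. Since $\gamma$ was an arbitrary admissible curve, taking the infimum produces $\|f(x)-f(y)\| \leq \|f'\|_\infty\, \rho_A(x,y)$, establishing intrinsic Lipschitz continuity with constant $\|f'\|_\infty$. The only delicate point is guaranteeing that every segment of the polygonal approximation remains inside $A$, which is precisely what the $\varepsilon$-fattening from Lemma~\ref{top1} secures; everything else is routine.
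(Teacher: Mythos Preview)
Your proof is correct. The paper does not give its own proof of this lemma; it is stated in the appendix with an explicit reference to \cite{MGYR24} for the argument, so there is nothing to compare against beyond noting that your polygonal-approximation argument (using compactness of $\gamma([0,1])$ and Lemma~\ref{top1} to keep the segments inside $A$, then applying the mean value inequality on each segment) is the standard route and matches what one finds in the cited source.
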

\begin{Lem} \cite[Lemma 40]{MGYR24} \label{comp0}
	Let $d,m \in \N$, let $\emptyset\neq B \subset \R^{d}$ and $\emptyset\neq A \subset \R^{m}$ be open, let $g\colon  B \to A$
	be intrinsic Lipschitz continuous with intrinsic Lipschitz constant $L_{g}$, let $f \colon  A \to \R^{m}$ be intrinsic Lipschitz continuous with intrinsic Lipschitz constant $L_{f}$.
	Then $f \circ g\colon  B \to \R^{m}$ is intrinsic Lipschitz continuous with intrinsic Lipschitz constant $L_{f} L_{g}$.
\end{Lem}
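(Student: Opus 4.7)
The plan is to show that the intrinsic metric behaves well under composition with intrinsic Lipschitz maps, that is, $\rho_A(g(x),g(y)) \leq L_g\,\rho_B(x,y)$ for all $x,y \in B$, and then chain this with the intrinsic Lipschitz property of $f$ to conclude.

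First I would observe that since $B$ is open and $g$ is intrinsic Lipschitz, Lemma~\ref{lipimpl0}(iii) ensures $g\colon B \to A$ is continuous. Hence, given any continuous path $\gamma\colon [0,1]\to B$ with $\gamma(0)=x$ and $\gamma(1)=y$, the composition $g\circ \gamma\colon [0,1]\to A$ is a continuous path connecting $g(x)$ and $g(y)$ inside $A$, so $g\circ\gamma$ is admissible in the definition of $\rho_A(g(x),g(y))$.

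Next I would bound the length of $g\circ\gamma$. For any partition $0\le t_0<\dots<t_n\le 1$, the restrictions $\gamma|_{[t_{k-1},t_k]}$ are continuous paths in $B$ from $\gamma(t_{k-1})$ to $\gamma(t_k)$, so by the definition of $\rho_B$ and using the intrinsic Lipschitz property of $g$,
\[
\sum_{k=1}^{n}\|g(\gamma(t_k))-g(\gamma(t_{k-1}))\| \leq L_g \sum_{k=1}^{n}\rho_B(\gamma(t_{k-1}),\gamma(t_k)) \leq L_g \sum_{k=1}^{n}l\bigl(\gamma|_{[t_{k-1},t_k]}\bigr) \leq L_g\, l(\gamma).
\]
Taking the supremum over all such partitions gives $l(g\circ\gamma)\leq L_g\, l(\gamma)$. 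Consequently $\rho_A(g(x),g(y))\leq L_g\, l(\gamma)$, and infimising over admissible $\gamma$ yields $\rho_A(g(x),g(y))\leq L_g\,\rho_B(x,y)$ (with the estimate being trivial if $\rho_B(x,y)=\infty$).

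Finally, combining this with the intrinsic Lipschitz property of $f$ on $A$ gives, for every $x,y\in B$,
\[
\|f(g(x))-f(g(y))\| \leq L_f\, \rho_A(g(x),g(y)) \leq L_f L_g\,\rho_B(x,y),
\]
which is exactly the claim. The only mildly delicate point is verifying that subarcs of $\gamma$ can be used to dominate $\rho_B$ along a partition so that the length bound telescopes correctly; once that observation is in place, the rest is a direct chain of inequalities from the definitions.
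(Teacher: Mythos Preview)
Your argument is correct. The paper does not supply its own proof of this lemma; it merely cites \cite[Lemma 40]{MGYR24}, so there is nothing to compare against beyond noting that your route---push a path through $g$, bound the length of $g\circ\gamma$ by $L_g\,l(\gamma)$ via the intrinsic Lipschitz estimate on each subarc, then apply the intrinsic Lipschitz property of $f$---is the standard one and is exactly what underlies the cited result.
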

\begin{Lem} \cite[Lemma 41]{MGYR24} \label{productnew0}
	Let $d,m,k,\ell\in \N$, let $\emptyset\neq A,B,
	C
	 \subset \R^{d}$ with $A,B\subset C$, let $f\colon C \to \R^{m \times k}$ and $g\colon C \to \R^{k\times\ell}$ be intrinsic Lipschitz continuous on $A$ and bounded on $B$, and let $f$ be constant on $C\setminus B$. Then $f  g\colon C\to\R^{m\times \ell}$ is intrinsic Lipschitz continuous on $A$.
\end{Lem}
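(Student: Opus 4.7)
My approach is a direct estimate via a shift followed by a short case analysis. The key observation is that after shifting $f$ by the constant value it takes on $C \setminus B$, the product $fg$ vanishes at any point where $f$ does, so the only points where a nontrivial bound is needed are those in $B$, where both $f$ and $g$ are bounded by hypothesis. No control on $g$ off $B$ is ever invoked, which matches exactly what is assumed.

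First I would reduce to the case in which the constant value of $f$ on $C \setminus B$ is zero. Let $c \in \R^{m\times k}$ denote that constant value, set $\tilde f = f - c$, and write $fg = \tilde f g + c g$. The piece $cg$ is intrinsic Lipschitz on $A$ because $cg(x) - cg(y) = c(g(x)-g(y))$ yields the Lipschitz bound $\|c\| L_g \rho_A(x,y)$, where $L_g$ is an intrinsic Lipschitz constant for $g$ on $A$. The function $\tilde f$ inherits the same intrinsic Lipschitz constant as $f$, remains bounded on $B$, and vanishes identically on $C \setminus B$. Hence it suffices to prove the statement for $\tilde f g$, so from now on I may assume $f \equiv 0$ on $C \setminus B$; combined with the bound $\|f\|\leq M_f$ on $B$ this gives $\|f(z)\|\leq M_f$ uniformly on $C$.

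Next I would fix $x,y\in A$ (the case $\rho_A(x,y) = \infty$ being trivial) and split into three cases according to whether $x$ and $y$ lie in $B$. If both $x,y\in B$, the identity $f(x)g(x) - f(y)g(y) = f(x)(g(x)-g(y)) + (f(x)-f(y))g(y)$ together with $\|f(x)\|\leq M_f$ and $\|g(y)\|\leq M_g$ gives $\|f(x)g(x)-f(y)g(y)\|\leq (M_f L_g + L_f M_g)\rho_A(x,y)$. If exactly one of the points is in $B$, say $x\in B$ and $y\notin B$, then $f(y) = 0$ so $f(x)g(x) - f(y)g(y) = (f(x)-f(y))g(x)$, which is bounded by $L_f M_g \rho_A(x,y)$ using $\|g(x)\|\leq M_g$ since $x\in B$. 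The symmetric subcase $x\notin B$, $y\in B$ is handled identically using the decomposition $(f(x)-f(y))g(y)$. Finally, if neither point lies in $B$, then $f(x) = f(y) = 0$, so $fg(x) = fg(y) = 0$ and the difference is zero.

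There is no serious obstacle; the only subtle point is that in the asymmetric case one must use the product-rule decomposition in which the $g$-factor is evaluated at the point that lies in $B$, where its bound is available, so that the other factor contributes only through the intrinsic Lipschitz increment of $f$. The uniform constant $L_f M_g + M_f L_g$ then bounds the Lipschitz quotient in every case (after undoing the shift, the resulting constant becomes $L_f M_g + (M_f + \|c\|)L_g$), which yields the intrinsic Lipschitz continuity of $fg$ on $A$.
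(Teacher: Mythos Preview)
Your proof is correct. The paper itself does not prove this lemma; it merely cites it from \cite{MGYR24} in the appendix, so there is no proof in the paper to compare against. Your reduction by subtracting the constant value of $f$ on $C\setminus B$, followed by the three-case split according to membership in $B$, is clean and complete; the one point worth being explicit about is that in the asymmetric case you correctly evaluate the $g$-factor at the point lying in $B$, which is exactly where the boundedness of $g$ is available and is the only place the hypothesis on $g$ is used.
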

\begin{Lem} \cite[Lemma 42]{MGYR24} \label{a0} 
	Let $d\in\N$, let $\emptyset\neq M \subset \R^{d}$ be a $C^{1}$-hypersurface 
	of positive reach
	and let $\nor\colon M \to \R^{d}$ be a normal vector along $M$. Let $f\colon \R^{d} \to \R^{d}$ be piecewise Lipschitz continuous with exceptional set $M$ and
	assume that for all $x \in M$, the limit $\lim_{h \to 0} f(x+h\nor(x))$  exists and coincides with $f(x)$. Then $f$ is continuous.
\end{Lem}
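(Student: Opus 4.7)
The plan is to reduce the claim to a statement about sequences in $\R^d\setminus M$ and then to exploit normal-direction continuity together with the intrinsic Lipschitz estimate on the open set $\R^d\setminus M$. Since $f$ restricted to $\R^d\setminus M$ is intrinsic Lipschitz continuous, Lemma~\ref{lipimpl0}(iii) immediately gives continuity of $f$ on $\R^d\setminus M$, so only continuity at an arbitrary $x_0\in M$ remains.

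Fix $x_0\in M$ and a sequence $y_n\to x_0$ in $\R^d$; after passing to a subsequence I may assume either $y_n\in M$ for all $n$ or $y_n\in\R^d\setminus M$ for all $n$. In the first case I would use the hypothesis at each $y_n$ to pick $h_n\in(0,1/n)$ with $\|f(y_n+h_n\nor(y_n))-f(y_n)\|<1/n$; then $z_n:=y_n+h_n\nor(y_n)\in\R^d\setminus M$, $z_n\to x_0$, and the convergence $f(y_n)\to f(x_0)$ reduces to the analogous convergence for $(z_n)$. So it suffices to treat the case $y_n\in\R^d\setminus M$.

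For such $y_n$ with $n$ large, Lemma~\ref{fed0} provides the decomposition $y_n=\pr_M(y_n)+\lambda_n\nor(\pr_M(y_n))$ with $\pr_M(y_n)\to x_0$ (Lemma~\ref{projdist}(i)) and $\lambda_n=\pm d(y_n,M)\to 0$. I then split
\[
f(y_n)-f(x_0)=\bigl(f(y_n)-f(x_0+\lambda_n\nor(x_0))\bigr)+\bigl(f(x_0+\lambda_n\nor(x_0))-f(x_0)\bigr);
\]
the second summand tends to zero by the hypothesis at $x_0$. For the first summand, both arguments lie on the same open side $Q_{\reach(M),\sgn(\lambda_n)}\subset\R^d\setminus M$ of $M$ (Lemma~\ref{sides0}), so intrinsic Lipschitz continuity of $f$ on $\R^d\setminus M$ with constant $L$ yields
\[
\|f(y_n)-f(x_0+\lambda_n\nor(x_0))\|\le L\,\rho_{\R^d\setminus M}\bigl(y_n,\,x_0+\lambda_n\nor(x_0)\bigr).
\]
To bound this intrinsic distance I would construct the lifted path $\gamma_n(t):=\tilde\gamma_n(t)+\lambda_n\nor(\tilde\gamma_n(t))$, which lies in $Q_{\reach(M),\sgn(\lambda_n)}$, where $\tilde\gamma_n\colon[0,1]\to M$ is a path from $\pr_M(y_n)$ to $x_0$; since $M$ is locally a $C^1$-graph over its tangent space at $x_0$, the path $\tilde\gamma_n$ can be chosen with $l(\tilde\gamma_n)\to 0$.

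The hard part will be showing $l(\gamma_n)\to 0$, that is, controlling by how much the lift $x\mapsto x+\lambda_n\nor(x)$ inflates lengths along $M$. This relies on $\nor$ being locally Lipschitz on $M$ near $x_0$, a regularity property granted by the positive reach of $M$ (which upgrades a $C^1$-hypersurface of positive reach to a $C^{1,1}$-manifold). Given this, the estimate $l(\gamma_n)\le (1+|\lambda_n|L_\nor)\,l(\tilde\gamma_n)\to 0$ produces the required bound on the intrinsic distance, completing the proof.
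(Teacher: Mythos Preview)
The paper does not give a proof of this lemma; it cites \cite[Lemma~42]{MGYR24} and refers the reader there, so there is no in-paper argument to compare against. Your outline is correct: the reduction to sequences $y_n\in\R^d\setminus M$, the decomposition $y_n=\pr_M(y_n)+\lambda_n\nor(\pr_M(y_n))$, the splitting of $f(y_n)-f(x_0)$ via the intermediate point $x_0+\lambda_n\nor(x_0)$, and the intrinsic-distance bound via a lifted path in $Q_{\eps,\sgn(\lambda_n)}$ all work. You correctly isolate the one nontrivial input---local Lipschitz continuity of $\nor$ on $M$---and rightly attribute it to positive reach (a $C^1$-hypersurface of positive reach is $C^{1,1}$); this fact is standard but is not among the appendix lemmas recorded here, so in a self-contained write-up it would need a separate reference.

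There is also a route that bypasses the $C^{1,1}$ upgrade entirely. For any fixed $\eps_0\in(0,\reach(M)/2)$ and $s_n=\sgn(\lambda_n)$, go from $y_n$ along its own normal up to height $s_n\eps_0$, then by a straight segment to $x_0+s_n\eps_0\nor(x_0)$, then back down the normal at $x_0$ to $x_0+\lambda_n\nor(x_0)$. The two normal legs have combined length at most $2\eps_0$, and the middle segment lies, for all large $n$, inside the open ball $B_{\eps_0}(x_0+s_n\eps_0\nor(x_0))\subset Q_{\eps,s_n}$ and has length tending to $0$ by mere continuity of $\pr_M$ and $\nor$. Hence $\limsup_n\rho_{\R^d\setminus M}\bigl(y_n,\,x_0+\lambda_n\nor(x_0)\bigr)\le 2\eps_0$ for every $\eps_0>0$, giving the conclusion with only $C^0$ regularity of $\nor$. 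Your lifted-path argument is perfectly valid; this alternative is just marginally more elementary.
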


\bibliographystyle{acm}
\bibliography{bibfile}

\end{document}